\documentclass[reqno,final]{amsart}
\usepackage{color} 
\usepackage{hyperref} 
\usepackage{graphicx} 

\usepackage{pstricks}
\usepackage{amssymb}
\usepackage[utf8]{inputenc}
\usepackage[T1]{fontenc}
\usepackage{lmodern}
\usepackage{amsmath,amssymb}
\usepackage{bm}
\usepackage{amsthm}
\usepackage{url}
\usepackage{graphicx}
\usepackage{tikz}
\usepackage{float}
\usepackage{dsfont}
\usepackage{comment}
\usepackage[english]{babel}
\usepackage{stmaryrd}
\usepackage{mathrsfs}





\theoremstyle{plain}
\newtheorem{theorem}{Theorem}[section]                                          
\newtheorem{proposition}[theorem]{Proposition}                          
\newtheorem{lemma}[theorem]{Lemma}
\newtheorem{corollary}[theorem]{Corollary}

\theoremstyle{definition}
\newtheorem{definition}[theorem]{Definition}
\theoremstyle{remark}

\makeatletter \@addtoreset{equation}{section} \makeatother




\newcommand{\dro}{\partial_t}  
\newcommand{\E}{\mathbb{E}} 

\newcommand{\PP}{\mathbb{P}}
\newcommand{\Z}{\mathbb{Z}}
\newcommand{\N}{\mathbb{N}}

\newcommand{\R}{\mathbb{R}}

\title{Spatialized evolutionary prisoner's dilemma: homogenization and propagation of chaos}

\author{Sylvain GIBAUD}

\email{sylvain.gibaud@math.univ-toulouse.fr}

\keywords{Evolutionary game theory, Demographic, Homogenization, Propagation of chaos.}

\begin{document}

\begin{abstract}
 Epstein \cite{epstein1998zones} introduced an agent-based model called Demographic Prisoner's dilemma. He shows, via simulations, that cooperation in this spatial evolutionary repeated game can be sustained.\\ In order to do proves, we put on this model a particle system framework in order to prove homogenization and propagation of chaos results. Firstly we prove the convergence of the spatial model to a random matching model, using homogenization techniques. Then we prove the convergence of the random matching model to a mean field model, using propagation of chaos techniques.
\end{abstract}

\maketitle

	\section{Introduction}
	Playing particle systems as a modeling tool in the evolution literature have been introduced by Smith and Price \cite{smith1973lhe}. Since, many ecological models have used playing particle system's approach (see a list of examples in the book of Hartl and Clark \cite{hartl1997principles}). From the math point of view, there are four main approaches to build models on evolution, according to Durrett and Levin \cite{durrett1994importance}.
	\begin{itemize}
		\setlength\itemsep{0.1em}
		\item In the mean field approach, the system is homogeneous in space and is described through the densities of particles in each state. The evolution of those densities are described by ordinary differential equations. 
		\item In the patch model approach, discrete individuals are grouped into patches without additional spatial structure. We will be interested in a modified version of the second one, called the random matching approach introduced by Gilboa \cite{gilboa1992model}.  In this model there is only one patch, all the particles are on this patch and a couple of particles are drawn uniformly on all couples to interact. This approach is used for example by  Ellison \cite{ellison1994cooperation}. The system is described through a Markov process.
		\item In the reaction-diffusion approach, the individuals are infinitesimal and distributed in space. The system is described through the densities of particles in each state and each position. Those densities are described by partial differential equations.
		\item In the interacting particle system approach, the individuals are discrete and space is treated explicitly. The system is described through a Markov process.
	\end{itemize}
	
	The particles system model we study is the Demographic Prisoner's Dilemma, introduced by Epstein \cite{epstein1998zones}. Let us introduce a modified (meaning without birth) version of Epstein model as follow:
	\begin{enumerate}
		\setlength\itemsep{0.2em}
		\item Let $(\Z / m \Z)^2$ be a fixed torus with $m \in \N^*$.
		\item $N$ particles are placed on the torus ($N \in \N^*$). For a particle $i \in \lbrace 1,\dots, N \rbrace$, let us denote $X_i(t)$ its position at time $t \in \R_+$. 
		\item The $N$ particles move according to continuous time independent symmetric simple random walks. Thus we associate to each particle $i$ a Poisson process with parameter $d\in \N^*$. When the Poisson process realizes, $X_i(t)$ jumps to one of its nearest neighbors with equal probability. Then two (or more) particles can be on the same site at the same time, as shown in the following example of evolution.
			\begin{center}
					\includegraphics[scale=0.25]{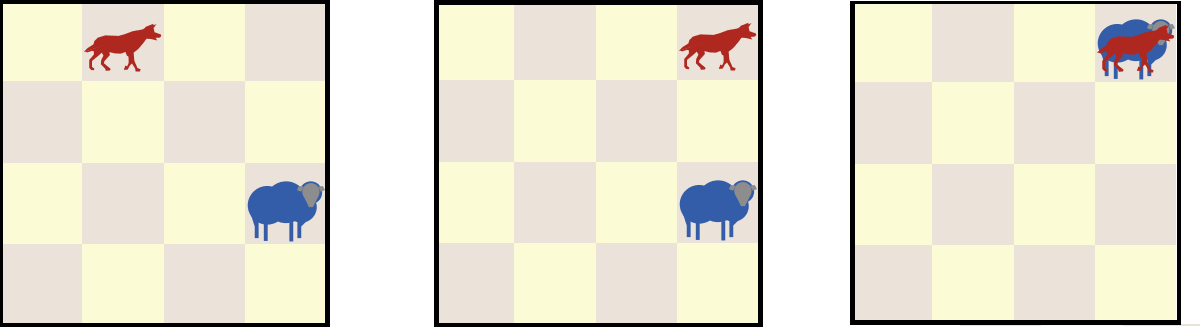}
				\end{center}
	
		\item Each particle $i \in \lbrace 1,\dots N \rbrace$ has a wealth at time $t \geq 0$ denoted $Y_i(t)\in \R_+$. If $Y_i$ reaches non positive number, it goes to 0 and stays there. It means that the particle dies.
		\item   The wealth changes through games. The game is the prisoner's dilemma:
		\begin{itemize}
			\setlength\itemsep{0.2em}
			\item  The players have two actions to \textit{Cooperate} or to \textit{Defect}.
			\item If both \textit{Cooperate} they get a Reward $\mathbf{R}$. But if one of the two {defects}, the \textit{Defector} gets a Temptation payoff $\mathbf{T}$, and this payoff is bigger than the Reward. If a \textit{Cooperator} is being \textit{Defected} instead of getting a reward he gets a Sucker payoff $\mathbf{-S}$. If both players \textit{Defect} (let us call them Player 1 and Player 2)  the nature flips a coin, if it is head Player 1 gets a Punishment payoff of $-2 \mathbf{P}$ and Player 2 gets a payoff of 0, if it is tail Player 2 gets a Punishment payoff of $-2 \mathbf{P}$ and Player 1 gets a payoff of 0.
			\item Then the payoff satisfies $\mathbf{T}>\mathbf{R}>0>$ and $\mathbf{S}>\mathbf{P}>0$. This is summarize in the following payoff matrices where at each game a coin is tossed determining which payoff matrix is used (action Top and action Left are \textit{Cooperate}, action Bottom and action Right are \textit{Defect}).\\
				\begin{eqnarray}\label{Intro payoffs 1}
				\left(\begin{array}{cc}
				(\mathbf{R},\mathbf{R}) & (-\mathbf{S},\mathbf{T}) \\ (\mathbf{T},-\mathbf{S}) & {(-2\mathbf{P},0) }  
				\end{array}\right),
				\end{eqnarray}
				
				\begin{eqnarray}\label{Intro payoffs 2}
				\left(\begin{array}{cc}
				(\mathbf{R},\mathbf{R}) & (-\mathbf{S},\mathbf{T}) \\ (\mathbf{T},-\mathbf{S}) & {(0,-2\mathbf{P}) }
				\end{array}\right).
				\end{eqnarray}
			\item A point to notice is that two players cannot lose wealth simultaneously.
			\item The unique strong Nash equilibrium is \textit{(Defect,Defect)}.
			\item To determine the action played by the individuals we place ourself in the framework where: "Each individual plays only one action, either he will \textit{Defect} every time either he will \textit{Cooperate} every time". One intuition behind it is: particles represent bacterias, and their actions is embedded in their DNA so they cannot change their action. Then a particle has a fixed action (\textit{Cooperate} or \textit{Defect}). There are two kinds of particles: the ones who always \textit{Cooperate} and the ones who always \textit{Defect}. We modelize this by the following: Particle $i$ plays his action according to a parameter $z_i  \in \lbrace 0,1 \rbrace$, called strategy which is constant over time. Particle $i$ plays \textit{Cooperate} if $z_i = 0$ and plays \textit{Defect} if $z_i = 1$.
			\item At the end of each game (between for example player $i$ and $j$) $Y_i$ and $Y_j$ are updated, adding the payoff of the game played by $i$ and $j$.
			\item To make the games happen, each couple of individuals $(i,j)$ is given a Poisson process independent of everything of parameter $v$. When this Poisson process realizes (for example at a time $t$), if the individuals are on the same site (\emph{i.e.} if $X^i(t)=X^j(t)$) and if their wealths are positive (\emph{i.e.} if $Y_i(t)>0$ and $Y_j(t)>0$) the individuals play together. Otherwise nothing happens.
			\item If the wealth of a particle reaches non positive then the particle stops playing with the other players forever.
		\end{itemize} 
		\item We also make an important assumption (useful for the propagation of chaos):  we assume that the particles are indistinguishable, \emph{i.e.} the distribution of $(X,{Y},z)$ is exchangeable \emph{i.e.} for every permutation $\sigma$ of $\lbrace 1, \dots, N \rbrace$, \[
			\begin{array}{l}
	\mathcal{L}\left((X_1,{Y_1},z_1), \dots, (X_N,{Y_N},z_N)\right) = \\
	\mathcal{L}\left((X_{\sigma(1)},{Y_{\sigma(1)}},z_{\sigma(1)}),\dots, (X_{\sigma(N)},{Y_{\sigma(N)}},z_{\sigma(N)})\right).
		\end{array}
		 	\]

	\end{enumerate}
	
	We have that $(X(t),Y(t),z)_t$ is a Continuous Time Markov Chain, we denote by $(\mathcal{F}^d_t)_t$ its filtration such that $(X(t),Y(t),z)_t$ is adapted to $(\mathcal{F}^d_t)_t$.\\

	The main contribution of this article is to prove the convergence as $d\to +\infty$ of the spatial model to a random matching model and then as $N\to +\infty$ to the mean field model. To do that, we introduce a probabilistic framework to the Epstein model using interacting particle system. We prove that the induced Markov process converges in law when $d\to +\infty$ to a Markov process induced by a random matching model. Then we show that when $N \to +\infty$ the latter Markov process converges in law to a non-linear Markov process induced by a mean field model.\\
	Epstein's model was studied by Dorofeenko and Shorish \cite{dorofeenko2002dynamical}. Assuming statistical independence they proved the convergence to a reaction-diffusion process. Evolutionary models through playing particles have been studied in the case where the evolution is driven by birth-death process by Champagnat and al. \cite{champagnat2004etude,champagnat2006unifying,champagnat2007invasion}. \\
	When we lose the sight of the evolution and of the games, Bordenave and al. \cite{bordenave2007particle} proved the convergence of an interacting particles system in a rapid varying environment to a mean field system.  The Mean Field Games have been introduced by Lasry and Lions \cite{lasry2007mean} in a context where there is neither space, nor evolution. They proved the convergence of Nash equilibria when the payoffs depends on the empirical mean and when the number of players goes to infinity.\\
	
	Let us state the main results of this article.\\
	
	We denote by $(e_1,\dots,e_N)$ the canonical basis of $\R^N$, $(e_1^c,\dots,e_N^c)$ ($c \in \lbrace 1,2\rbrace$) the canonical basis of $(\R^2)^N$ and we set $m \in \N^*,\ d\in \N^*,\ v\in \R_+$. We also identify $\left((\Z/m\Z)^2 \times \R_+ \times \lbrace 0,1 \rbrace\right)^N$ to $((\Z/m\Z)^2)^N \times \R_+^N \times \lbrace 0,1 \rbrace^N$, then we can write $(x,y,z)\in\left((\Z/m\Z)^2 \times \R_+ \times \lbrace 0,1 \rbrace\right)^N$ with $x\in ((\Z/m\Z)^2)^N,  y\in \R_+^N,z\in \lbrace0,1\rbrace^N$.  \\
	We describe the three approaches, we are interested in, by infinitesimal generators.
	\vspace{20pt}
		
	\textbf{Particle system approach}\\

	The generator of the Markov process presented earlier is: for all continuous bounded functions $f$ of $\left((\Z/m\Z)^2 \times \R_+ \times \lbrace 0,1 \rbrace\right)^N$,
	\begin{eqnarray}\label{A Intro}
	\mathcal{A} f(x,y,z) = \mathcal{A}_d f(x,y,z) + v\,\mathcal{A}_g f(x,y,z)
	\end{eqnarray}
	where $\mathcal{A}_d$ is the generator representing the movement of the particles. The particles move following independent random walks so $\mathcal{A}_d$ is defined by: for all continuous bounded functions $f$ of $\left((\Z/m\Z)^2 \times \R_+ \times \lbrace 0,1 \rbrace\right)^N$: 
	\[
	\mathcal{A}_d f(x,y,z) = \sum\limits_{i=1}^{N} \sum\limits_{c=1}^2 \sum\limits_{\epsilon = \pm 1} \frac{d}{2 \times 2} [f(x + \epsilon e_i^c,y,z) - f(x,y,z)].
	\]
	The generator $\mathcal{A}_g$ is the generator representing the evolution of the wealth of particles through games. Let us give firstly $\mathcal{A}_g$, then the explanation on how it works. We have for all continuous bounded functions $f$ of $((\Z/m\Z)^2 \times \R_+ \times \lbrace 0,1 \rbrace)^N$: $\mathcal{A}_g f(x,y,z)$ is equal to
	\[
	\sum\limits_{\substack{(i,j) \in \lbrace 1,\dots N \rbrace^2\\
			i \neq j}}  \frac{1}{2} \mathds{1}_{x_i = x_j} \mathds{1}_{\substack{y_i > 0\\y_j >0}} \left[\begin{array}{l}\ \mathds{1}_{\substack{z_i = 0 \\ z_j =0}} (f(x,y + \mathbf{R} e_i + \mathbf{R} e_j,z) - f(x,y,z)) + \smallskip\\
	2\mathds{1}_{\substack{z_i = 1\\z_j = 0}} (f(x,y + \mathbf{T} e_i - \mathbf{S} e_j,z) - f(x,y,z)) +  \qquad\smallskip \\
	\ \mathds{1}_{\substack{z_i = 1\\z_j =1}} \left(\begin{array}{l}\ \frac{1}{2}(f(x,y -2\mathbf{P} e_j,z) - f(x,y,z))  \\
	+ \frac{1}{2}(f(x,y -2\mathbf{P} e_i,z) - f(x,y,z))\end{array}\right) \ 
	
	\end{array} \right]
	\]
	Let us explain a bit this generator:
	\begin{itemize}
		\item $\mathds{1}_{x_i = x_j}$ represents the spatial structure meaning that a game makes the wealth change only if the two particles (with positions $x_i$ and $x_j$) are on the same site.
		\item $\mathds{1}_{\substack{y_i > 0\\y_j >0}}$ checks if both particles are alive (\emph{i.e.} there wealth ($y_i$ and $y_j$) are positive)
		\item the big bracket represents the change of wealth through the game of the two particles. The term $\mathds{1}_{\substack{z_i=1\\z_j=0}}$ looks at the strategies of the players and choose the right payoff to give to the particles. For example if particle 1 and 2 are playing (both are alive and on the same site) if particle 1 cooperates and 2 defects then particle 1 gets a payoff of $-\mathbf{S}$ and particle 2 gets $\mathbf{T}$, the term in the generator representing this type of transition is: 
		\[
		\mathds{1}_{\substack{z_1=0\\z_2=1}} [f(x,y -\mathbf{S}e_1 + \mathbf{T}e_2,z) - f(x,y,z)].
		\]
	\end{itemize}
	\vspace{20pt}
	\textbf{Random matching approach}\\

	The random matching generator we use (obtained removing the moving part and re-normalizing by the size of the torus) is:  
		for all continuous bounded functions $f$ of $(\R_+ \times \lbrace 0,1 \rbrace)^N$:
		\begin{eqnarray} \label{RM Intro}
		\overline{\mathcal{A}}f(y,z) = \frac{v}{m^2} \overline{\mathcal{A}_g} f (y,z)
		\end{eqnarray}

		where $\overline{\mathcal{A}_g} f (y,z)$ is equal to
		\begin{eqnarray}\label{Géné RM Intro}
		  \sum\limits_{\substack{(i,j) \in \lbrace 1,\dots N \rbrace^2\\
				i \neq j}} \frac{1}{2}  \mathds{1}_{\substack{y_i > 0\\y_j >0}} \left[\begin{array}{l}\ \mathds{1}_{\substack{z_i = 0\\ z_j =0}} (f(y + \mathbf{R} e_i + \mathbf{R} e_j,z) - f(y,z)) + \\
		2\mathds{1}_{\substack{z_i = 1\\z_j = 0}} (f(y + \mathbf{T} e_i - \mathbf{S} e_j,z) - f(y,z)) +  \qquad\smallskip \\
		\ \mathds{1}_{\substack{z_i = 1\\z_j =1}} \left(\begin{array}{l}\ \frac{1}{2}(f(y -2\mathbf{P} e_i,z) - f(y,z))  \\
		+ \frac{1}{2}(f(y -2\mathbf{P} e_j,z) - f(y,z))\end{array}\right) \ 
		
		\end{array} \hspace{-10pt}\right]
		\end{eqnarray}
		This generator describe the whole system. It is the generator of a Continuous Time Markov Chain. \\
		
		But we want to look at only one particle of this system. So we use two tools: a generator depending on a measure to code the interactions, and a measure to code the information of the particles in the system.\\
		
		The usual tool to look at the particles is the empirical measure.
		With $(y,z)=((y_1,z_1),\dots,(y_N,z_N)) \in (\R_+,\lbrace 0,1 \rbrace)^N$ we set:
		\[
		m^{N}(y,z) = \frac{1}{N} \sum_{j=1}^{N} \delta_{(y_j,z_j)}.
		\]
		To describe the evolution of one particle $i\in\lbrace1,\dots,N \rbrace$, the usual way is taking in $\overline{\mathcal{A}}f$ a function $f$ constant on \{$(y_j,z_j)$\} for $j\neq i$. This operator is for all $f$ continuous bounded from $\R_+\times \lbrace 0,1 \rbrace$ to $\R$, for all $(y,z) \in (\R_+,\lbrace 0,1 \rbrace)^N$
		
	\begin{eqnarray}\label{Gene une particle dans random matching Intro}
	\frac{Nv}{m^2} \left(L_{m^{N}(y,z)}f(y_i,z_i) \right)  + h(y_i,z_i)
	\end{eqnarray}
		
		where 
		\begin{itemize}
		\setlength\itemsep{0.2em}
		\item $L$ is for $f$ a continuous bounded from $\R_+ \times \lbrace 0,1 \rbrace$ to $\R$ and $(y,z) \in \R_+ \times \lbrace0,1\rbrace$
		\begin{eqnarray}\label{Interaction Seul Intro}
				L_{m} f(y,z) =  \frac{1}{2} \mathds{1}_{y>0}\left[\begin{array}{l}
				\ \ \mathds{1}_{z=0}\, m((0,+\infty)\times \lbrace 0 \rbrace) (f(y + \mathbf{R},z) - f(y,z))\smallskip\\
				+\mathds{1}_{z=0}\, m((0,+\infty)\times \lbrace 1 \rbrace) (f(y - \mathbf{S},z) - f(y,z)) \smallskip\\
				+\mathds{1}_{z=1}\, m((0,+\infty)\times \lbrace 0 \rbrace) (f(y + \mathbf{T},z) - f(y,z)) \smallskip\\
				+\mathds{1}_{z=1}\, m((0,+\infty)\times \lbrace 1 \rbrace) \frac{1}{2}(f(y - 2\mathbf{P},z) - f(y,z))
				\end{array}\right].
		\end{eqnarray}.
		$L$ is a generator depending in a measure. The heuristic behind it is to look at the evolution of one particle in an environment ruled by this measure. For $m$ a measure of $\R_+ \times \lbrace 0,1\rbrace$ the generator 
		\item $h(y_i,z_i) = \frac{v}{2 m^2} \left(\mathds{1}_{z_i=z_j=0} [f(y_i + R) - f(y_i)] + \frac{1}{2}\mathds{1}_{z_i=z_j=1}[f(y_i - 2\mathbf{P}) - f(y_i)]\right)$ is a bounded compensating term. It compensates the fact that in $L_{m^N(y,z)}$ particle $i$ can play with herself. We have that $|h(y_i,z_i)| \leq 2\frac{v}{m^2}\Vert f \Vert_\infty$.
		\end{itemize}

		Then the generator described in $(\ref{Gene une particle dans random matching Intro})$ does not generate a Markov process (because $m^{N}$ does not depend only on $(y_i,z_i)$).\\
		\vspace{20pt}

		\textbf{Mean Field approach}\\
		
		Before introducing the generator, let us set some notations. For $\mu$ a probability measure on the space of sample path right-continuous left-limited (also called càd-làg) of $\R_+\times \lbrace0,1\rbrace$ $D(\R_+,\R_+\times \lbrace0,1 \rbrace)$ we set: for all $S$ Borel sets of $\R_+ \times \lbrace 0,1 \rbrace$
		\[
		\mu_t(S) = \mu(X_t \in S)
		\] 
		The stochastic process describing the mean field approach is a non-linear Markov process. That means that the distribution $\mu$ of the process is described by a generator also depending on $\mu$. For that purpose we call this kind of generator non-linear generator. \\
		 The distribution of its sample path is $\mu$ and the generator of $\mu$ is: for all $(y,z) \in \R_+ \times \lbrace0,1\rbrace$
		\[
		\frac{v}{m^2} L_{\mu_t}f(y,z)
		\]
		\vspace{3pt}\\
		Now that we have set the previous generators, we can state our results.\\
        Since the distribution of $(X,Y,z)$ depends on $d$, when necessary we make this dependence visible denoting for example $(X^d,Y^d,z)$.
	\begin{theorem}
		Let $(X^d(t),Y^d(t),z)_t$ be a stochastic process with some initial distribution $\nu$ of $((\Z/m\Z)^2\times (\R_+ \times \lbrace 0,1 \rbrace))^N$ with second marginal denoted $\nu_y$ and generator $\mathcal{A}$ defined in $(\ref{A Intro})$.\\	
		Then $(Y^d,z)_d$ converges in distribution (as a sequence of stochastic processes) to the Markov process $(\overline{Y},z)$ with infinitesimal generator $\overline{\mathcal{A}}$ (defined in (\ref{RM Intro})) of domain the space of continuous bounded functions of $\R_+ \times \lbrace 0,1 \rbrace$ and initial distribution $\nu_y$.
	\end{theorem}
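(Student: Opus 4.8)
The plan is to read this as an averaging (homogenization) statement: as $d\to+\infty$ the $N$ walks run at rate $d$ and, on the time scale of the wealth dynamics, equilibrate instantaneously to their stationary law, the uniform measure $\pi$ on $((\Z/m\Z)^2)^N$. Since two independent uniform positions coincide with probability $1/m^2$, the spatial indicator $\mathds{1}_{x_i=x_j}$ that appears in $\mathcal{A}_g$ should self-average to the constant $1/m^2$, turning $v\,\mathcal{A}_g$ into $\tfrac{v}{m^2}\overline{\mathcal{A}_g}=\overline{\mathcal{A}}$. I would organise the argument along the classical three-step scheme for convergence of Markov processes: first prove tightness of $(Y^d,z)_d$ in the Skorokhod space; then identify every limit point as a solution of the martingale problem for $\overline{\mathcal{A}}$ via the perturbed test function (corrector) method; and finally conclude using well-posedness of that martingale problem.

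\emph{Tightness.} I would note that $Y^d$ is a pure jump process whose jumps are bounded in size by $\max(\mathbf{R},\mathbf{T},\mathbf{S},2\mathbf{P})$ and occur only at the rings of the $N(N-1)/2$ game clocks, hence at total rate at most $vN(N-1)/2$, uniformly in $d$: the fast motion of $X^d$ does not itself create wealth jumps. Thus the number of jumps of $Y^d$ on $[0,t]$ is stochastically dominated by a Poisson variable independent of $d$, and the Aldous--Rebolledo criterion gives tightness of $(Y^d)_d$ in $D(\R_+,\R_+^N)$; the component $z$ is constant.

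\emph{Identification of the limit.} Fix a bounded $f=f(y,z)$. Since $\mathcal{A}_d$ only moves positions, $\mathcal{A}_d f=0$, so $\mathcal{A}f=v\,\mathcal{A}_g f$. Writing $\mathcal{A}_d=d\,\mathcal{L}_0$ with $\mathcal{L}_0$ the rate-one generator of the $N$ walks, I would solve, for each fixed $(y,z)$, the Poisson equation on position space
\[
\mathcal{A}_d\,\chi(\cdot,y,z)=-v\,\mathcal{A}_g f(\cdot,y,z)+\frac{v}{m^2}\,\overline{\mathcal{A}_g}f(y,z).
\]
This is solvable because the right-hand side has zero $\pi$-average: integrating against $\pi$ replaces each $\mathds{1}_{x_i=x_j}$ by $1/m^2$, which is exactly what turns $v\,\mathcal{A}_g f$ into $\tfrac{v}{m^2}\overline{\mathcal{A}_g}f$. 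As $\mathcal{L}_0$ is invertible on mean-zero functions with an inverse that does not depend on $(y,z)$, the solution satisfies $\|\chi\|_\infty=O(1/d)$ uniformly in $(y,z)$. Setting $f^d=f+\chi$ gives, by construction,
\[
\mathcal{A}f^d=\overline{\mathcal{A}}f+v\,\mathcal{A}_g\chi,\qquad \|v\,\mathcal{A}_g\chi\|_\infty=O(1/d),
\]
the last estimate because $\mathcal{A}_g$ acts through bounded wealth shifts. The Dynkin formula then produces the $\mathcal{F}^d_t$-martingale $M^d_t=f^d(X^d(t),Y^d(t),z)-f^d(X^d(0),Y^d(0),z)-\int_0^t\mathcal{A}f^d\,ds$, so that
\[
f(Y^d(t),z)-f(Y^d(0),z)-\int_0^t\overline{\mathcal{A}}f(Y^d(s),z)\,ds=M^d_t+O(1/d)
\]
uniformly on compact time intervals. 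Passing to the limit along a convergent subsequence (boundedness supplies uniform integrability, and almost every time is a continuity point of the limit) would show that any limit point $(\overline Y,z)$ makes $f(\overline Y(t),z)-f(\overline Y(0),z)-\int_0^t\overline{\mathcal{A}}f(\overline Y(s),z)\,ds$ a martingale, i.e.\ solves the martingale problem for $\overline{\mathcal{A}}$ started from $\nu_y$.

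\emph{Conclusion and main difficulty.} Because $\overline{\mathcal{A}}$ has bounded total jump rate, its martingale problem is well-posed; the limit is then unique and the whole sequence $(Y^d,z)_d$ converges to it. The heart of the argument, and the step I expect to require the most care, is the corrector: verifying the zero-mean solvability condition and, above all, the uniform-in-$(y,z)$ bound $\|\chi\|_\infty=O(1/d)$ together with the control of the commutator $\mathcal{A}_g\chi$, which is precisely what forces the fast spatial fluctuations to average out at the correct rate $1/m^2$.
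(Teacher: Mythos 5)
Your proposal is correct, but it identifies the limit by a genuinely different mechanism than the paper. The paper invokes Kurtz's averaging theorem: after checking compact containment for $(Y^d,z)$ (jump counts dominated by a Poisson process of rate $vN^2/2$ uniformly in $d$), relative compactness of $\{\mathcal{L}(X^d(t))\}$, the martingale relation with zero error term, and an $L^p$ bound on $\mathcal{A}f$, it extracts a limit point $\bigl((\overline{Y},z),\overline{\Gamma}\bigr)$ of the pair (process, occupation measure of $X^d$), identifies $\overline{\Gamma}(\mathrm{d}s\times\mathrm{d}x)=\pi(\mathrm{d}x)\,\mathrm{d}s$ by the ergodic theorem for the sped-up random walk (its Lemma 3.2), and so obtains the averaged martingale problem; uniqueness is then settled by the dissipativity-plus-range criterion of Ethier--Kurtz. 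You instead use the perturbed test function method: since the position space is finite and the fast generator is $d\,\mathcal{L}_0$ with $\mathcal{L}_0$ irreducible, the Poisson equation is solvable on $\pi$-mean-zero functions with $\|\chi\|_\infty=O(1/d)$, and the centering condition is exactly the identity $\int \mathds{1}_{x_i=x_j}\,\pi(\mathrm{d}x)=1/m^2$ that also drives the paper's Lemma 3.2. Your route is more self-contained and quantitative (it yields an $O(1/d)$ error in the approximate martingale, hence in principle a rate of convergence), but it leans on explicit invertibility of $\mathcal{L}_0$, which is harmless here but less robust than the occupation-measure formulation; the paper's route is softer and transfers directly to the general-graph extension in its last section. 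Your tightness and uniqueness steps (bounded jump rates uniform in $d$, well-posedness of the bounded-rate martingale problem) coincide in substance with the paper's (A1) and its final appeal to Theorem 2.8. One point both treatments pass over in the same way, so I do not count it against you: $\overline{\mathcal{A}}f$ involves the indicators $\mathds{1}_{y_i>0}$ and is therefore not literally continuous on $E_2$, which requires a word when passing to the limit in $\int_0^t\overline{\mathcal{A}}f(Y^d(s),z)\,\mathrm{d}s$.
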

	
	This theorem is a homogenization theorem. It mainly uses the averaging theorem of Kurtz \cite{kurtz1992averaging}. It was used in many examples, we only quote the closest to our problem which is the article of Bordenave and al. \cite{bordenave2007particle}.\\
	The following theorem is a propagation of chaos result. To have it, we have to "slow" the time dividing the generator by $N$. Let this generator $\overline{\mathcal{A}}$ be defined by: for all continuous bounded functions $f$ from $\R_+ \times \lbrace 0,1 \rbrace$ to $\R$: $\forall (y,z)\in (\R_+ \times \lbrace0,1 \rbrace)^N$
	
	\[
	\overline{\mathcal{A}}f(y,z) = \frac{1}{N}\frac{v}{2 \,m^2} \overline{\mathcal{A}_g} f (y,z)
	\]
	with $\overline{\mathcal{A}_g}$ defined in (\ref{Géné RM Intro}).
	\begin{theorem}
		Let $(\hat{Y}^N(t),z)_t = ((\hat{Y}^N_1(t),z_1),\dots (\hat{Y}^N_N(t),z_N))_t$ be a stochastic process with generator $\overline{\mathcal{A}}$ and with initial measure $\nu^{\otimes N}$.\\
		Then when $N \to +\infty$, for all $i\in \N$, $\mathcal{L}((\hat{Y}^N_i(t),z_i)_t)$ converges to $\mu$ with initial condition $\nu$ and with non-linear Markov generators $(L_{\mu_t})_t$ ($L$ defined in (\ref{Interaction Seul Intro})).
	\end{theorem}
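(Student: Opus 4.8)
The plan is to establish propagation of chaos by a synchronous coupling argument in the spirit of Sznitman, exploiting the crucial structural feature that the interaction enters the generator $L_m$ of (\ref{Interaction Seul Intro}) only through the two scalar quantities $m((0,+\infty)\times\{0\})$ and $m((0,+\infty)\times\{1\})$, i.e. the proportions of living cooperators and of living defectors. Thus the mean field is effectively finite-dimensional, and $L_m$ is affine, hence Lipschitz, in $m$ through this pair. First I would establish well-posedness of the limiting nonlinear process: writing $a_t = \mu_t((0,+\infty)\times\{0\})$ and $b_t = \mu_t((0,+\infty)\times\{1\})$, the nonlinear generator $\frac{v}{m^2}L_{\mu_t}$ depends on $\mu_t$ only through $(a_t,b_t)\in[0,1]^2$; since the jump rates are bounded and affine in $(a_t,b_t)$, a Picard iteration on $[0,T]$ (contracting for $T$ small, then concatenated) yields existence and uniqueness of $\mu$ and of the associated process $\overline{Y}_i$.

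Second, I would realise both systems on a common probability space through a graphical construction. Attach to the ordered pairs the Poisson processes driving $\hat{Y}^N$, and construct $N$ i.i.d. copies $\overline{Y}_1,\dots,\overline{Y}_N$ of the nonlinear process, with $\overline{Y}_i$ started at the same initial value as $\hat{Y}^N_i$ (possible since the initial law is $\nu^{\otimes N}$, so chaos holds at $t=0$). The coupling is chosen so that, as long as $\hat{Y}^N_i(t)=\overline{Y}_i(t)$ and the empirical proportions match the limiting ones, the two particles attempt the same jumps at the same times and with the same payoff type; when the parameters agree the jump-type laws coincide, so the only source of decoupling is a mismatch in the empirical proportions, occurring at a rate bounded by $C\big(|A^N_t - a_t| + |B^N_t - b_t| + 1/N\big)$, where $A^N_t = m^{N}_t((0,+\infty)\times\{0\})$, $B^N_t = m^{N}_t((0,+\infty)\times\{1\})$ and the $1/N$ accounts for the self-exclusion compensated by $h$.

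Third, I would run a Gronwall estimate on $p^N_t := \PP(\hat{Y}^N_i(t)\neq \overline{Y}_i(t))$, which is independent of $i$ by exchangeability. The decoupling bound gives $\tfrac{d}{dt}p^N_t \le C\big(p^N_t + \E|A^N_t - a_t| + \E|B^N_t - b_t| + 1/N\big)$. Because $\{\,\mathds{1}_{\hat{Y}^N_i(t)>0}\neq \mathds{1}_{\overline{Y}_i(t)>0}\,\}\subseteq \{\hat{Y}^N_i(t)\neq \overline{Y}_i(t)\}$ and because the $\overline{Y}_i$ are i.i.d. with $\E[\tfrac{1}{N}\sum_i \mathds{1}_{\overline{Y}_i(t)>0,\,z_i=0}] = a_t$, one obtains $\E|A^N_t - a_t| \le p^N_t + C/\sqrt{N}$, and likewise for $B^N_t$. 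Substituting closes the loop: $\tfrac{d}{dt}p^N_t \le C'(p^N_t + 1/\sqrt{N})$ with $p^N_0 = 0$, whence $p^N_t \le C'(T)\,N^{-1/2}$ on $[0,T]$. Since the coupled pairs coincide with probability tending to $1$, for every fixed $i$ the law of $(\hat{Y}^N_i,z_i)$ converges to that of $(\overline{Y}_i,z_i)$, which is $\mu$; this is exactly propagation of chaos.

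The main obstacle I anticipate is the death boundary at $y=0$: the rates contain the discontinuous indicator $\mathds{1}_{y>0}$, so the interaction functionals $m\mapsto m((0,+\infty)\times\{c\})$ are not weakly continuous, the limit $\mu_t$ carrying a growing atom at $0$ formed by dead particles. The coupling circumvents this by comparing aliveness indicators only through the event that the two trajectories differ, which is controlled directly by $p^N_t$ rather than by weak convergence of $m^{N}_t$; one must nonetheless check carefully that the synchronous coupling can be built so that decoupling is generated only by the environment mismatch (plus rare boundary straddling), and that the affine dependence on $(a_t,b_t)$ delivers the Lipschitz constants needed for both the Picard and the Gronwall steps. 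Should the explicit coupling prove delicate, an alternative is the martingale-problem route: prove tightness of $(m^{N}_t)$ in Skorokhod space, write its semimartingale decomposition with $O(1/N)$ bracket, identify every limit point as the unique solution of the nonlinear martingale problem generated by $\frac{v}{m^2}L_{\mu_t}$, and deduce convergence of $m^{N}$ to the deterministic $\mu$, the boundary atom again being the delicate point when passing to the limit in the drift.
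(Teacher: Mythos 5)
Your primary route (synchronous coupling plus Gronwall) is sound but is genuinely different from the paper's proof, which follows what you only sketch as your fallback: the martingale-problem/compactness route. Concretely, the paper (i) proves tightness of $\mathcal{L}(\hat{Y}^N_1,z_1)$ via the Ethier--Kurtz criteria and transfers it to the empirical measures $\mu^N$ by Sznitman's Proposition 2.2; (ii) proves the key decorrelation estimate $\E(M_i^{f,N}(t)M_j^{f,N}(t))\leq C^f/N$ for the compensated-Poisson martingales attached to distinct particles, by computing quadratic covariations of the driving noises; (iii) identifies every limit point of $\mathcal{L}(\mu^N)$ as a solution of the nonlinear martingale problem through a functional $\mathcal{G}$ that is $\Pi^\infty$-a.s.\ continuous; and (iv) gets uniqueness from Graham's contraction criterion, which requires the jump kernel $q\mapsto J(q,x)$ to be Lipschitz in \emph{total variation} --- precisely the affine dependence on $q((0,+\infty)\times\{0\})$ and $q((0,+\infty)\times\{1\})$ that you exploit, and which neatly sidesteps the failure of weak continuity at the death boundary that you correctly flag. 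Your coupling buys an explicit $N^{-1/2}$ rate and avoids the tightness/identification machinery, at the price of having to construct the partner-selection coupling carefully (the partner's ``type'' --- living cooperator, living defector, or dead --- must be coupled between the empirical and limiting environments, with failure probability controlled by $|A^N_t-a_t|+|B^N_t-b_t|$, and you should note that a decoupled pair never re-merges, so $p^N_t$ is monotone and the Gronwall loop closes); the estimate $\E|A^N_t-a_t|\leq p^N_t+C/\sqrt{N}$ via $\{\mathds{1}_{\hat{Y}^N_i(t)>0}\neq\mathds{1}_{\overline{Y}_i(t)>0}\}\subseteq\{\hat{Y}^N_i(t)\neq\overline{Y}_i(t)\}$ and the i.i.d.\ law of large numbers is exactly right. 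The paper's softer route generalizes more readily (it is reused verbatim for the extended model of the last section) but yields no rate; both approaches ultimately rest on the same structural facts --- bounded jump rates and total-variation Lipschitz dependence of the interaction on the law.
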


	\begin{corollary}
		The previous $\mu$ can be described by the following evolution equations.\\
				Let $(Y(t),z)_t$ be a process of law $\mu$ starting with the initial distribution $\nu$. We have: for all $y\in \R^*_+$
				\begin{eqnarray} \label{Evol B Intro}
				\begin{array}{rcl}
				\dro \PP(Y(t) = y,z =1) & = & \ \ \PP(Y(t) = y - \mathbf{R},z= 1)\, \PP(Y(t)>0,z=1)\mathds{1}_{y>\mathbf{R}}\smallskip\\
				& & + \PP(Y(t) = y + \mathbf{S}, z=1)\, \PP(Y(t) >0, z=0)\smallskip\\
				& & - \PP(Y(t)=y,z = 1)\,\PP(Y(t)>0)
				\end{array}
				\end{eqnarray}

				\begin{eqnarray} \label{Evol R Intro}
				\begin{array}{rcl}
				\dro \PP(Y(t) = y,z =0) & = &\ \ \ \  \PP(Y(t) = y - \mathbf{T},z= 0)\, \PP(Y(t)>0,z=1)\mathds{1}_{y >\mathbf{T}}\smallskip\\
				& & + \frac{1}{2}\PP(Y(t) = y + 2\mathbf{P}, z=0) \,\PP(Y(t) >0, z=0)\smallskip\\
				& & + \frac{1}{2}\PP(Y(t) = y, z=0)\, \PP(Y(t) >0, z=0)\smallskip\\
				& & - \ \ \PP(Y(t)=y,z = 0)\,\PP(Y(t)>0)
				\end{array}
				\end{eqnarray}
				
	\end{corollary}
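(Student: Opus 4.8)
The plan is to obtain (\ref{Evol B Intro})--(\ref{Evol R Intro}) as the forward Kolmogorov (Fokker--Planck) equations attached to the non-linear generator $(L_{\mu_t})_t$ produced by the preceding theorem. By construction $(Y,z)\sim\mu$ solves the associated non-linear martingale problem, so that for every bounded measurable $f$ on $\R_+\times\{0,1\}$ the process $f(Y(t),z)-\int_0^t L_{\mu_s}f(Y(s),z)\,\mathrm ds$ is a martingale. Taking expectations and differentiating in $t$ gives the weak forward equation
\[
\dro \mu_t(f)=\mu_t\!\left(L_{\mu_t}f\right),\qquad \mu_t(f):=\E[f(Y(t),z)].
\]
The first step is to record the two environment functionals occurring in $L_m$ from (\ref{Interaction Seul Intro}): with $m=\mu_t$ one has $\mu_t((0,+\infty)\times\{0\})=\PP(Y(t)>0,z=0)$ (the mass of alive cooperators), $\mu_t((0,+\infty)\times\{1\})=\PP(Y(t)>0,z=1)$ (the mass of alive defectors), and $\PP(Y(t)>0)$ is their sum.

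Next I would specialize the weak equation to test functions isolating a single state. Since the only wealth increments are the fixed quantities $\mathbf R,\mathbf S,\mathbf T,2\mathbf P$, the marginal $\mu_t$ is carried, for each $t$, by the countable lattice generated by $\mathrm{supp}(\nu_y)$ and these increments, together with the absorbing point $0$; hence $\mu_t$ is purely atomic on $\R_+^*$ and one may take $f=\mathds 1_{\{(y,z_0)\}}$ for fixed $y\in\R_+^*$ and $z_0\in\{0,1\}$ (approximating this indicator by bounded continuous functions supported near the isolated atom to stay inside the stated domain). For such $f$ one has $\mu_t(f)=\PP(Y(t)=y,z=z_0)$, and $L_{\mu_t}f(y',z_0)$ is non-zero only when a single game sends $(y',z_0)$ onto $(y,z_0)$ (the gain terms) or when $(y',z_0)=(y,z_0)$ itself jumps away (the loss term).

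The third step is bookkeeping. For a cooperator the two admissible transitions are $y\mapsto y+\mathbf R$ (meeting an alive cooperator) and $y\mapsto y-\mathbf S$ (meeting an alive defector); for a defector they are $y\mapsto y+\mathbf T$ (meeting an alive cooperator) and, with probability $\tfrac12$, $y\mapsto y-2\mathbf P$ (meeting an alive defector), each weighted by the mass of the corresponding partner type computed in the first step. Reading the gain terms backwards produces the shifted arguments $y-\mathbf R,\ y+\mathbf S$ for the cooperator and $y-\mathbf T,\ y+2\mathbf P$ for the defector; the indicators $\mathds 1_{y>\mathbf R}$ and $\mathds 1_{y>\mathbf T}$ express that the source state must itself be alive, i.e.\ strictly positive, which is precisely the factor $\mathds 1_{y>0}$ carried by $L$ (no indicator is needed on the $+\mathbf S$ and $+2\mathbf P$ inflows, their sources being automatically positive). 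The loss term is $-\PP(Y(t)=y,z_0)\,\PP(Y(t)>0)$, the total encounter rate; for the defector strategy the defector--defector encounter leaves the wealth unchanged with probability $\tfrac12$, so one adds back $\tfrac12\,\PP(Y(t)=y,z_0)\,\PP(Y(t)>0,z_0)$, the rate of no-change self-type meetings, which is the extra diagonal contribution visible in (\ref{Evol R Intro}). Grouping these inflows and outflows for $z_0\in\{0,1\}$ yields (\ref{Evol B Intro}) and (\ref{Evol R Intro}), up to the global multiplicative time-scale constant fixed in the statement of the preceding theorem.

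The step I expect to be the genuine obstacle is the passage from the weak form $\dro\mu_t(f)=\mu_t(L_{\mu_t}f)$ to the pointwise equations for $\PP(Y(t)=y,z_0)$: one must justify differentiating under the expectation and using the non-continuous indicators $\mathds 1_{\{(y,z_0)\}}$, and one must treat the absorbing boundary $\{y=0\}$ so that mass reaching $0$ (death) is removed through the loss term and never re-injected into any positive state. Verifying that $\mu_t$ remains atomic on the explicit lattice, that each atomic mass $t\mapsto\PP(Y(t)=y,z_0)$ is absolutely continuous, and that only finitely many atoms can feed a given $y$ over a bounded wealth range, makes the interchange rigorous and closes the argument.
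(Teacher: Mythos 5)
Your proposal is correct and follows essentially the same route as the paper: plug the indicator of a single state $(y,z_0)$ into the non-linear martingale identity, take expectations to get the integrated weak forward equation, and differentiate after checking regularity in $t$ (the paper does this last step by writing $\PP(Y(t)>u,z=z_0)$ as a Poisson mixture over the number of jumps, which is a slightly more direct substitute for your atomicity/absolute-continuity argument). Your gain--loss bookkeeping, including the extra $\tfrac12$ self-type term in the defector equation, matches the paper's computation.
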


	We see that when $N \to +\infty$, the system is described by ordinary differential equations. Moreover with this theorem, there exists a probability measure $\mu$ such that the random matching model is $\mu$-chaotic. This means in particular that for all couple of particles $(i,j)$ the joint law of $(i,j)$ is when $N \to +\infty$: $\mu \otimes \mu$. The law $\mu$ is described by the evolution equations (\ref{Evol B Intro}),(\ref{Evol R Intro}).\\
	This theorem is a propagation of chaos result. One important reference about propagation of chaos is the Saint Flour course of Sznitman \cite{sznitman1991topics}. This topic was also studied by  Graham and al. \cite{graham2000chaoticity,graham1997stochastic}, and used Bordenave and al. \cite{bordenave2007particle}. Also Graham \cite{graham1995homogenization} make homogenization and propagation of chaos on just moving particles on a bounded domain with absorption and desorption arriving at a fast scale.\\
	In the first part, we introduce a modified version of Epstein model and give a probabilistic framework (together with some reminders on Markov processes) to this new model. In the second part, we state and prove the homogenization theorem. In the third part we give some reminders on propagation of chaos and prove the propagation of chaos theorem. In the last part we extend the previous results to a more general context (where the game is not necessarily a prisoner's dilemma game, there can be more than two strategies which are not necessarily pure and in a more general graph).

\section{The Model}\label{Model}

Let us state the first notations.\\

\textbf{Notations}\\
	Let $(\Omega,\mathcal{T},\PP)$ be a probability space.
	Let $(e_1,\dots,e_N)$ be the canonical basis of $\R^N$ (with $N\in \N^*$).
	Let $(e_1^1,e_1^2,e_2^1,\dots, e_N^1,e_N^2)$ be the canonical basis of $(\R^2)^N$.\\
	For $E$ a complete separable metric space, $\mathcal{L}(X)$ is the distribution of the $E$-valued random variable $X$, \\
    Let us denote $\overline{C}(E)$ the set of continuous bounded functions from $E$ to $\R$.
	We denote by $\mathcal{P}(E)$ the probability space on $E$.\\
	If $E_1,E_2$ are two complete separable spaces such that $E=E_1 \times E_2$, we identify the functions in $\overline{C}(E)$ constant on the first variable with the functions in $\overline{C}(E_2)$.\\
\newline

Before introducing the model in details let us give some reminders about Markov processes.\\
One good way to study Markov processes is via their distribution. For a complete separable state space $F$, a process $X=(X(t))_{t\geq0}$ has value in the canonical space.
\begin{definition}[Canonical space]
The canonical space $D(\R_+,F)$ is the space of right continuous functions from $[0,+\infty)$ to $F$ with left limits, endowed with the Skohorod topology associated to its usual metric. With this metric, $D(\R_+,F)$ is complete and separable (see the book of Ethier and Kurtz \cite{EK} for more details).\\

We denote by $(\mathcal{T}_t)_t$ the natural filtration associated to $D(\R_+,F)$.
We define the canonical process $X := (X(t))_{t\geq0}$ by:\[
\forall t\geq 0, \qquad	X(t): \omega \in D(\R_+,F) \mapsto \omega(t) \in F
	\]
\end{definition}

To define a Markov process in a discrete countable state space $F$ (which is the case here), we need an initial measure $\nu$ on $F$ and a matrix $(\mathcal{A}(x,y))_{x,y\in F}$ with $\mathcal{A}(x,x) = - \sum\limits_{\substack{y\in F\\ y\neq x}} \mathcal{A}(x,y)$. For $x\neq y$ $\mathcal{A}(x,y)$ gives the rates of transition of the future process from $x$ to $y$. With this matrix we define the generator (which is one of the main tool in the study of Markov processes) of the following Markov process such that: for all $f$ bounded and measurable (for the subsets of $F$) from $F$ to $\R$ and for all $x\in F$:
\[
\mathcal{A}f(x) = \sum\limits_{y \in F} \mathcal{A}(x,y)[f(y)-f(x)]
\]
The generator is a bounded linear operator on the bounded functions of $F$. To make the difference between the rate matrix and the operator generator. We will always denoting the matrix with two entries (for example $\mathcal{A}(.,.)$) and the generator with no entry (for example $\mathcal{A}$).\\
Conversely giving a generator we can construct the rate matrix looking at the rates in the expression of the generator.

\begin{definition}\label{Construct Probabiliste Process Markov }

Let $\mathcal{A}$ be a and an initial measure $\nu$ we can construct a Markov process $X$ as follow:\begin{enumerate}
\item Let $(Y(n))_n$ be a Markov chain in $F$ with initial distribution $\nu$ and with transition matrix $\left(\frac{\mathcal{A}(x,y)}  {|\mathcal{A}(x,x)|}\right)_{x,y \in F}$
\item Let $\Delta_0,\Delta_1,\dots, $ be independent and exponentially distributed with parameter 1 (and independent of $Y(.)$) random variables.
\item We define the Markov process $(X(t))_t$ in $F$ with initial distribution $\nu$ and generator $\mathcal{A}$ by:

\begin{eqnarray}
X(t) = \left\lbrace \begin{array}{ll}
Y(0), & 0 \leq t < \frac{\Delta_0}{|\mathcal{A}(Y(0),Y(0))|} \medskip\\
Y(k), & \sum\limits_{j=0}^{k-1}\frac{\Delta_j}{|\mathcal{A}(Y(j),Y(j))|} \leq t < \sum\limits_{j=0}^{k} \frac{\Delta_j}{|\mathcal{A}(Y(j),Y(j))|}
\end{array}\right.
\end{eqnarray}
\end{enumerate}
Note that we allow $\mathcal{A}(x,x) = 0$ taking $\Delta/0 = \infty$.
\end{definition}

Now that Continuous Time Markov Chains are properly defined. \\
Let us link them to one of the key tool in probabilities: Martingales. Since martingales are easier to deal with than Markov processes (speacking of convergences for example) the following proposition (Proposition 1.7 of Ethier Kurtz \cite{EK}) is really useful. 

\begin{proposition}
Let $(X(t))_t$ a càd-làg Markov process adapted to the filtration $(\mathcal{F}_t)_t$ with generator $\mathcal{A}$. We have: for all continuous bounded functions $f$ from $F$ to $\R$
\[
M^f(t) := f(X(t)) - \int\limits_{0}^{t} \mathcal{A}f(X(s)) \text{d}s
\] is a $\mathcal{F}_t$-martingale.
\end{proposition}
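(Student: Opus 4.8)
The plan is to verify the three defining properties of a martingale — integrability, adaptedness, and the conditional-expectation identity — exploiting crucially the fact, noted just above, that on the countable state space $F$ the generator $\mathcal{A}$ is a \emph{bounded} operator. Consequently the associated semigroup $P_t := e^{t\mathcal{A}} = \sum_{n\geq 0} \frac{t^n}{n!}\mathcal{A}^n$ converges in operator norm, and the Markov property reads $\E[f(X(t))\mid \mathcal{F}_s] = (P_{t-s}f)(X(s))$ for $s\leq t$.

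First I would dispatch integrability and adaptedness, which are routine. Since $f$ is bounded and $\mathcal{A}$ is a bounded operator, $\mathcal{A}f$ is bounded, so $|M^f(t)| \leq \Vert f \Vert_\infty + t\,\Vert \mathcal{A}f \Vert_\infty < \infty$, giving integrability. Adaptedness of $f(X(t))$ is immediate, and the integral $\int_0^t \mathcal{A}f(X(s))\,\mathrm{d}s$ is $\mathcal{F}_t$-measurable as an almost-sure limit of Riemann sums whose terms are $\mathcal{F}_s$-measurable with $s \leq t$.

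The core step is the identity $\E[M^f(t)-M^f(s)\mid \mathcal{F}_s]=0$ for $s\leq t$. Writing out the increment and applying the Markov property to the first term yields $\E[f(X(t))\mid \mathcal{F}_s]=(P_{t-s}f)(X(s))$. For the integral term I would interchange conditional expectation and the time integral by Fubini, legitimate because of the uniform bound $\Vert \mathcal{A}f \Vert_\infty$, and then apply the Markov property once more to obtain $\int_s^t (P_{u-s}\mathcal{A}f)(X(s))\,\mathrm{d}u = \int_0^{t-s}(P_r\mathcal{A}f)(X(s))\,\mathrm{d}r$ after the substitution $r=u-s$. It then remains to combine these two with the semigroup identity $(P_{t-s}f)(x)-f(x)=\int_0^{t-s}(P_r\mathcal{A}f)(x)\,\mathrm{d}r$, which follows by differentiating the norm-convergent series for $P_r$ term by term to get $\frac{\mathrm{d}}{\mathrm{d}r}P_rf = P_r\mathcal{A}f$ and integrating in $r$. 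The three contributions cancel exactly, establishing the martingale property.

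The only technical point — and here it is a mild one — is justifying the semigroup identity and the Fubini interchange; both are painless precisely because $\mathcal{A}$ is bounded, so the exponential series converges in operator norm and every integrand is uniformly bounded. I therefore expect no genuine obstacle, which is consistent with the role of this proposition as a preliminary tool (a restatement of Dynkin's formula) rather than a result demanding new ideas.
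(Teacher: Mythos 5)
The paper does not actually prove this proposition; it simply cites Proposition 1.7 (Chapter 4) of Ethier--Kurtz, so there is no in-paper argument to compare against. Your proof is the standard one for the bounded-generator case and it is correct: integrability and adaptedness are immediate from the boundedness of $f$ and of $\mathcal{A}$, and the martingale identity reduces, via the Markov property and Fubini, to the semigroup relation $P_{t-s}f - f = \int_0^{t-s} P_r\mathcal{A}f\,\mathrm{d}r$, which holds because $P_t = e^{t\mathcal{A}}$ converges in operator norm. The one step you use silently is the identification of the transition semigroup of the process constructed in Definition 2.2 (jump chain plus exponential holding times) with $e^{t\mathcal{A}}$; this is the Kolmogorov backward equation and is unproblematic for bounded rates, but it is where the probabilistic construction actually meets the analytic object $e^{t\mathcal{A}}$, so it deserves at least a sentence. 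Note also that your argument is genuinely narrower than the cited Ethier--Kurtz proposition, which holds for unbounded generators via the full-generator formalism; for the purposes of this paper, where all rate matrices are explicitly bounded, that loss of generality is harmless.
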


Conversely, the notion of martingale problems and the theorem of uniqueness make the connection from martingales to Markov processes. The following definitions and theorems come from the book of Ethier Kurtz\cite{EK} (chapter 4 section 3 and 4)
\begin{definition}
	Let $\mathcal{A}$ be a generator. By a solution of the martingale problem for $(\mathcal{A},\nu)$ we mean a càd-làg stochastic process $W$ with value in $E$ such that $\mathcal{L}(W(0)) = \nu$ and for each $f \in \overline{C}(E)$: 
	\[
	f(W(t)) - \int\limits_{0}^{t} \mathcal{A}f(W(s)) \text{d}s
	\]
	is a martingale with respect to the filtration $(\mathcal{F}_t^W)_t$ generated by $W$.
\end{definition}

One can show (see the beginning chapter 3 section 3 of Ethier Kurtz\cite{EK}) that the statement "a càd-làg process $W$ is a solution of the martingale problem for $(\mathcal{A},\mu)$" is actually a statement about the distribution of $W$.

\begin{definition}
	We say that uniqueness holds for solutions of the martingale problem for $(\mathcal{A},\nu)$ if any two solutions of the martingale problem have the same finite-dimensional distributions.
\end{definition}
The following theorem gives a criterion to have uniqueness for martingale problem. Also he says that if there is uniqueness then the process is Markovian.
\begin{theorem}\label{Uniqueness Martingale problem}
	Let $E$ be a separable metric space and $\mathcal{A}$ be a linear operator of $\overline{C}(E)$ ($\overline{C}(E)$ is endowed with the uniform norm) such that 
	\begin{itemize}
		\item for each $f \in \overline{C}(E)$ and $\lambda >0$ we have: $\Vert \lambda f - \mathcal{A}f \Vert \geq \lambda \Vert f \Vert$. 
		\item the range of $(\lambda \text{Id} - \mathcal{A})$ verifies $\overline{\mathcal{R}(\lambda \text{Id} - \mathcal{A})} =  \overline{C}(E)$ for the uniform norm (for some $\lambda >0$).
	\end{itemize}
	Let $\nu$ be a probability measure on $E$ and suppose $W$ is a solution of the martingale problem for $(\mathcal{A},\nu)$.\\
	Then $W$ is a Markov process with generator $\mathcal{A}$, with initial distribution $\nu$, and uniqueness holds for the martingale problem for $\mathcal{A}$.
\end{theorem}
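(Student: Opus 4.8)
The plan is to recognize the two hypotheses as exactly the ingredients of the Hille--Yosida theorem and then to bootstrap a uniqueness statement for one-dimensional marginals up to full finite-dimensional uniqueness via the Markov property. First I would record that the dissipativity estimate $\Vert \lambda f - \mathcal{A}f \Vert \geq \lambda \Vert f \Vert$ forces $\lambda\,\mathrm{Id} - \mathcal{A}$ to be injective with an inverse bounded by $1/\lambda$ on its range, while the second hypothesis makes that range dense in $\overline{C}(E)$. By the Hille--Yosida theorem these two facts imply that the closure $\overline{\mathcal{A}}$ generates a strongly continuous contraction semigroup $(T(t))_{t\geq 0}$ on the closure of the domain, with resolvent $(\lambda\,\mathrm{Id} - \overline{\mathcal{A}})^{-1} = \int_0^\infty e^{-\lambda t} T(t)\,\mathrm{d}t$. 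This provides the object against which the law of any solution will be compared.

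Next I would exploit the defining martingale property to pin down the one-dimensional distributions. Let $W$ be any solution of the martingale problem for $(\mathcal{A},\nu)$ and let $\nu(f)$ denote $\int f\,\mathrm{d}\nu$. For $f$ in the domain of $\mathcal{A}$, taking expectations in the martingale $f(W(t)) - \int_0^t \mathcal{A}f(W(s))\,\mathrm{d}s$ yields
\[
\E_\nu[f(W(t))] = \nu(f) + \int_0^t \E_\nu[\mathcal{A}f(W(s))]\,\mathrm{d}s .
\]
Multiplying by $\lambda e^{-\lambda t}$, integrating over $t \in (0,+\infty)$ and applying Fubini gives, with $R_\lambda g := \E_\nu\big[\int_0^\infty e^{-\lambda t} g(W(t))\,\mathrm{d}t\big]$,
\[
R_\lambda\big((\lambda\,\mathrm{Id} - \mathcal{A})f\big) = \nu(f) .
\]
Since $\lambda\,\mathrm{Id} - \mathcal{A}$ is injective, the right-hand side is a function of $g = (\lambda\,\mathrm{Id} - \mathcal{A})f$ alone; as $R_\lambda$ is a bounded functional, $|R_\lambda g| \leq \lambda^{-1}\Vert g \Vert$, and as $g$ ranges over the dense set $\mathcal{R}(\lambda\,\mathrm{Id} - \mathcal{A})$, the functional $R_\lambda$ is forced to be the same for every solution. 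Injectivity of the Laplace transform, together with the right-continuity of the paths, then upgrades this to the assertion that $t \mapsto \mathcal{L}(W(t))$ is common to all solutions and equals $\nu(T(t)\,\cdot\,)$.

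Finally I would promote one-dimensional uniqueness to finite-dimensional uniqueness through the Markov property. Fixing $s \geq 0$ and a regular conditional distribution of $W$ given $\mathcal{F}_s^W$, a shift computation shows that under this conditional law the shifted process $(W(s+t))_t$ again solves the martingale problem, now for $(\mathcal{A},\delta_{W(s)})$. Applying the one-dimensional uniqueness just obtained to this shifted problem identifies $\E_\nu[g(W(s+t)) \mid \mathcal{F}_s^W]$ with $(T(t)g)(W(s))$, which is simultaneously the Markov property and the identification of the transition semigroup. An induction on the number of time points, invoking this Markov property at each step, then shows that all finite-dimensional distributions coincide, which is exactly uniqueness for the martingale problem, and exhibits $W$ as a Markov process with semigroup $(T(t))_t$ and generator $\mathcal{A}$.

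The main obstacle is the Markov-property step: one must construct the regular conditional distributions and verify carefully that the conditioned shifted process is again a solution, which requires the measurability of the solution map and an approximation argument to handle that the relevant $f$ lies in the closure of the domain rather than the domain itself. The passage from almost-every $t$ to every $t$ via Laplace inversion and path regularity is a second, milder technical point, while the Hille--Yosida input and the Fubini manipulation in the resolvent identity are routine.
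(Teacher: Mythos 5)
Your outline is correct and is essentially the standard Ethier--Kurtz argument (Theorems 4.4.1 and 4.4.2 of \cite{EK}): the resolvent identity $R_\lambda((\lambda\,\mathrm{Id}-\mathcal{A})f)=\nu(f)$ extracted from the martingale property, density of the range plus Laplace inversion and right-continuity to fix the one-dimensional marginals, then the conditional-shift argument and induction to get the Markov property and full finite-dimensional uniqueness. The paper itself gives no proof of this statement --- it imports it verbatim from Ethier and Kurtz --- so your proposal matches the approach of the cited source rather than diverging from anything in the paper.
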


\vspace{10 pt}

Let us remind the generators used in the following.

\textbf{Notations}
\begin{itemize}
	\item 	We call $E_1:= \left( (\Z / m \Z)^2 \right)^N$ the finite state space of the Markov process $(X(t))_t:= (X^1(t),\dots,X^N(t))_t$.
    \item We call $E_2:= (\R_+ \times \lbrace -1,0,1 \rbrace)^N$ the state space of the stochastic process $ (Y(t),Z)_t:= ((Y^1(t),Z^1),\dots,(Y^N(t),Z^N))_t$.
    \item We denote $E:= \left( (\Z / m \Z)^2 \right) \times \R_+ \times \left(\lbrace -1,0,1 \rbrace \right)^N$ and we identify it with $\left( (\Z / m \Z)^2 \right)^N \times \R_+^N \times \lbrace -1, 0 ,1 \rbrace ^N$ and then we denote $(x,y,z) \in E$ with $x \in \left( (\Z / m \Z)^2 \right)^N$, $y \in \R_+^N$ and $z \in \left(\lbrace -1,0,1 \rbrace\right)^N$
    \item We denote by $\mathcal{C}_N = \lbrace (i,j) \in \lbrace 1,\dots,N\rbrace / i \neq j \rbrace$ all the particles couples which can play together
\end{itemize}
\textcolor{white}{text}\\

The generator of the Markov process we study is: for all $f \in \overline{C}(E)$,
\begin{eqnarray}\label{A}
\mathcal{A} f(x,y,z) = \mathcal{A}_d f(x,y,z) + v\,\mathcal{A}_g f(x,y,z).
\end{eqnarray}

The part $\mathcal{A}_d$ is the generator representing the motion of the particles. The particles move following independent random walks. Then $\mathcal{A}_d$ is defined by: for all 
 $f\in \overline{C}(E)$: 
\begin{eqnarray}\label{Ad}
\mathcal{A}_d f(x,y,z) = \sum\limits_{i=1}^{N} \sum\limits_{c=1}^2 \sum\limits_{\epsilon = \pm 1} \frac{d}{2 \times 2} [f(x + \epsilon e_i^c,y,z) - f(x,y,z)].
\end{eqnarray}
The generator $\mathcal{A}_g$ is the generator representing the evolution of the wealth of particles through games. Let us give firstly $\mathcal{A}_g$ then the explication of how it works. We have for all functions $f\in \overline{C}(E)$: $\mathcal{A}_g f(x,y,z)$ is equal to
\begin{eqnarray}\label{Ag}
  \sum\limits_{\substack{(i,j) \in \mathcal{C}_N}} \frac{1}{2} \mathds{1}_{x_i = x_j} \mathds{1}_{\substack{y_i > 0\\y_j >0}} \left[\begin{array}{l}\ \mathds{1}_{\substack{z_i = 0\\z_j =0}} (f(x,y + \mathbf{R} e_i + \mathbf{R} e_j,z) - f(x,y,z)) + \\
2\mathds{1}_{\substack{z_i = 1\\z_j = 0}} (f(x,y + \mathbf{T} e_i - \mathbf{S} e_j,z) - f(x,y,z)) +  \qquad\smallskip \\
\ \mathds{1}_{\substack{z_i = 1\\ z_j =1}} \left(\begin{array}{l}\ \frac{1}{2}(f(x,y -2\mathbf{P} e_j,z) - f(x,y,z))  \\
+ \frac{1}{2}(f(x,y -2\mathbf{P} e_i,z) - f(x,y,z))\end{array}\right) \ 

\end{array} \right]
\end{eqnarray}
Let us explain a bit this generator:
\begin{itemize}
	\item $\mathds{1}_{x_i = x_j}$ represents the spatial structure meaning that a game makes the wealth change only if the two particles are on the same site.
	\item $\mathds{1}_{\substack{y_i > 0\\ y_j >0}}$ checks if both particles are alive (\emph{i.e.} their wealth are positive)
	\item the big bracket represents the change of wealth through the game of the two particles. The term $\mathds{1}_{\substack{z_i=1\\z_j=0}}$ looks at the strategies of the players and choose the right payoff to give to the particles. For example if particle 1 and 2 are playing (both are alive and on the same site) if particle 1 cooperates and 2 defects then particle 1 gets a payoff of $-\mathbf{S}$ and particle 2 gets $\mathbf{T}$, the term in the generator representing this type of transition is: 
	\[
	\mathds{1}_{\substack{z_1=0\\z_2=1}} [f(x,y -\mathbf{S}e_1 + \mathbf{T}e_2,z) - f(x,y,z)].
	\]
\end{itemize}

\section{Convergence to the Random matching model}

In this section we show the following theorem:
\begin{theorem}\label{Thm Spatial model to Intermediate model}

Let $(X^d(t),Y^d(t),z)_t$ be a stochastic process with initial distribution $\nu \in \mathcal{P}(E)$ with second marginal denoted $\nu_y$ and generator $\mathcal{A}$ defined in $(\ref{A}),\ (\ref{Ad})$, $(\ref{Ag})$.\\

	Then $(Y^d,z)_d$ converges in distribution (as a sequence of stochastic processes) to the Markov process $(\overline{Y},z)$ with infinitesimal generator $\overline{\mathcal{A}}$ and domain $\overline{C}(E_2)$: for all $f\in \overline{C}(E_2)$
	\[
\overline{\mathcal{A}}f(y,z) = \frac{v}{m^2} \overline{\mathcal{A}_g} f (y,z)
	\]
	where $\overline{\mathcal{A}_g} f (y,z)$ is equal to
	\begin{eqnarray}\label{Generateur Modele intermediaire}
	\frac{1}{2}\sum\limits_{\substack{(i,j) \in \lbrace 1,\dots N \rbrace^2\\
			i \neq j}}  \mathds{1}_{\substack{y_i > 0\\y_j >0}} \left[\begin{array}{l}\ \mathds{1}_{\substack{z_i = 0\\z_j =0}} (f(y + \mathbf{R} e_i + \mathbf{R} e_j,z) - f(y,z)) + \\
	2\mathds{1}_{\substack{z_i = 1\\z_j = 0}} (f(y + \mathbf{T} e_i - \mathbf{S} e_j,z) - f(y,z)) +  \qquad\smallskip \\
	\ \mathds{1}_{\substack{z_i = 1\\ z_j =1}} \left(\begin{array}{l}\ \frac{1}{2}(f(y -2\mathbf{P} e_i,z) - f(y,z))  \\
	+ \frac{1}{2}(f(y -2\mathbf{P} e_j,z) - f(y,z))\end{array}\right) \ 
	
	\end{array} \right].
	\end{eqnarray}

\end{theorem}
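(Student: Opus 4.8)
The plan is to apply the averaging theorem of Kurtz \cite{kurtz1992averaging}, treating the positions $X^d$ as a fast \emph{autonomous} component and $(Y^d,z)$ as the slow component. The key structural observation is that the motion generator $\mathcal{A}_d$ acts only on the $x$-coordinate; hence for any test function $f\in\overline{C}(E_2)$, which under our identification does not depend on $x$, we have $\mathcal{A}_d f\equiv 0$ and therefore $\mathcal{A}f = v\,\mathcal{A}_g f$. By the martingale characterization of Markov processes recalled above,
\[
M^{f,d}(t) = f(Y^d(t),z) - \int_0^t v\,\mathcal{A}_g f\big(X^d(s),Y^d(s),z\big)\,\mathrm{d}s
\]
is an $(\mathcal{F}^d_t)_t$-martingale, and the entire $d$-dependence of the integrand sits in the indicators $\mathds{1}_{x_i=x_j}$.

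First I would establish tightness of $(Y^d,z)_d$ in $D(\R_+,E_2)$. Since the $z$-component is constant in time (and $Y^d(0)$ has law $\nu_y$ for every $d$, so the initial condition already matches the target $\nu_y$), only $Y^d$ must be controlled. The total jump rate of $Y^d$ is bounded by $vN(N-1)$ uniformly in $d$ (each ordered couple fires at rate at most $v$), and the jumps take values in the finite set $\{\pm\mathbf{R},\mathbf{T},-\mathbf{S},-2\mathbf{P}\}$. The number of jumps on $[0,t]$ is thus stochastically dominated, uniformly in $d$, by a Poisson variable, which gives the Aldous tightness criterion and hence relative compactness of $(Y^d)_d$.

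Next comes the averaging, which is the heart of the argument. Introduce the occupation measure $\Gamma^d([0,t]\times A)=\int_0^t \mathds{1}_{X^d(s)\in A}\,\mathrm{d}s$ of the positions on $((\Z/m\Z)^2)^N$. Writing $\mathcal{A}_d = d\,Q$, where $Q$ is the generator of the unit-rate product random walk, and applying the martingale property to a test function $h$ of $x$ alone shows that $h(X^d(t))-d\int_0^t Q h(X^d(s))\,\mathrm{d}s$ is a martingale; dividing by $d$, both the boundary term (bounded by $\|h\|_\infty/d$) and the rescaled martingale (whose predictable bracket is $O(d\,t\,\|h\|_\infty^2)$, hence $O(t/d)$ after division by $d^2$) vanish as $d\to\infty$. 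Consequently every limit point $\Gamma$ of $(\Gamma^d)_d$ disintegrates as $\Gamma(\mathrm{d}s\,\mathrm{d}x)=\pi_s(\mathrm{d}x)\,\mathrm{d}s$ with $\int Q h\,\mathrm{d}\pi_s=0$ for a.e.\ $s$; since the symmetric random walk on the torus is irreducible, each $\pi_s$ is forced to equal the uniform product measure $\pi$ on $((\Z/m\Z)^2)^N$, for which $\pi(x_i=x_j)=1/m^2$ for every $i\neq j$.

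Finally I would pass to the limit in $M^{f,d}$. Because $v\,\mathcal{A}_g f$ is affine in the indicators $\mathds{1}_{x_i=x_j}$ with coefficients that are bounded functions of $(Y^d,z)$, replacing the time integral against $\Gamma^d$ by its limit against $\pi\otimes\mathrm{Leb}$ turns every $\mathds{1}_{x_i=x_j}$ into the constant $1/m^2$; the integrand therefore converges to $\overline{\mathcal{A}}f(\overline{Y}(s),z)$, and any joint limit point $(\overline{Y},\Gamma)$ yields a solution of the martingale problem for $(\overline{\mathcal{A}},\nu_y)$. Since $\overline{\mathcal{A}}$ is a bounded operator on $\overline{C}(E_2)$ (bounded jump rates), the uniqueness criterion of Theorem \ref{Uniqueness Martingale problem} applies, so the martingale problem has a unique solution, the limit is Markov with generator $\overline{\mathcal{A}}$, and the whole sequence $(Y^d,z)_d$ converges in distribution to it. The main obstacle is precisely this averaging step: one must control the feedback between the slow variable and the fast environment — the games change $Y^d$, whose ``alive'' indicators in turn gate which games occur — and the occupation-measure formulation of Kurtz is what decouples this, since the positions evolve autonomously and the indicators $\mathds{1}_{x_i=x_j}$ enter $\mathcal{A}_g$ linearly.
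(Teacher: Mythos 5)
Your proposal is correct and follows the same overall architecture as the paper: exploit that $\mathcal{A}_d f=0$ for $f\in\overline{C}(E_2)$, verify the hypotheses of Kurtz's averaging theorem (compact containment for $Y^d$ via uniformly bounded jump rates and amplitudes, the exact martingale with $\epsilon^f_d=0$, the $L^p$ bound), identify the limiting occupation measure as $\pi\otimes\mathrm{Leb}$ so that each $\mathds{1}_{x_i=x_j}$ averages to $1/m^2$, and close with uniqueness of the martingale problem for the bounded operator $\overline{\mathcal{A}}$ via Theorem \ref{Uniqueness Martingale problem}. The one place where you genuinely diverge is the identification of the limit of $\Gamma^d$. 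The paper's Lemma \ref{Temps d'occup1} uses the exact self-similarity $(X^1(td))_t\overset{\mathcal{L}}{=}(X^d(t))_t$ together with the ergodic theorem for the continuous-time random walk to show directly that $\Gamma^d([0,t]\times S)\to t\,\pi(S)$; you instead apply the martingale property to functions $h$ of $x$ alone, divide by $d$, and conclude that any limit occupation measure disintegrates into $Q$-invariant measures $\pi_s$, which irreducibility forces to equal $\pi$. Both are valid; the paper's route is shorter here precisely because the fast component is an autonomous time-changed copy of a fixed ergodic chain, while your Echeverr\'ia-type argument is the more robust one and would survive if the fast dynamics depended on $d$ in a less explicit way. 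Two minor points to tighten: the martingale-bracket estimate you invoke for the rescaled fast martingale should be stated for the product chain on $((\Z/m\Z)^2)^N$ (whose total jump rate is $Nd$, still giving an $O(t/d)$ bracket after division by $d^2$); and when you "replace the time integral against $\Gamma^d$ by its limit," you are implicitly using the joint relative compactness of $((Y^d,z),\Gamma^d)$ in $D(\R_+,E_2)\times\ell_b(E_1)$ supplied by Kurtz's theorem — worth saying explicitly, as the paper does, since the integrand depends on the slow variable as well.
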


\subsection*{Proof of the theorem}

To prove the previous theorem we use a homogenization theorem from Kurtz \cite{kurtz1992averaging}. 
\subsection{Preliminary lemma}

Before applying Kurtz's theorem, we will prove a lemma.\\

Observe that $X^d = (X^d(t))_{t\geq 0}$ (defined in the model) is a Markov process and its unique invariant distribution $\pi = (\mathcal{U}(\Z / m \Z)^2)^{\otimes N}$ is independent from $d$ (where $\mathcal{U}(\Z / m \Z)^2$ is the uniform law on the torus $(\Z / m \Z)^2$).
\begin{lemma}\label{Temps d'occup1}
	
	Let $t >0$ and $S$ be a Borel set of $E_1$.
Let $\Gamma^d([0,t]\times S) = \int_0^t \mathds{1}_{X^d(s) \in S} \text{d}s$. Then we have the following convergence: 
\[
\Gamma^d([0,t] \times S) \underset{d \to + \infty}{\overset{\mathcal{L}}{\longrightarrow}}t\, \pi(S).
\]
\end{lemma}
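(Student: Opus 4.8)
The plan is to exploit the fact that the limit $t\,\pi(S)$ is deterministic, so that convergence in distribution is equivalent to convergence in probability, and to reduce everything to the ergodicity of a single fixed random walk through a time-change. First I would record that $X^d$ evolves autonomously: its generator $\mathcal{A}_d$ acts only on the position coordinates and equals $d$ times the generator $\mathcal{A}_1$ of the rate-$1$ system of $N$ independent symmetric simple random walks on the torus. Consequently, speeding up time by the factor $d$ gives the identity in law $(X^d(s))_{s\ge 0}\stackrel{d}{=}(X^1(ds))_{s\ge 0}$, where $X^1$ starts from the first marginal $\nu_x$ of $\nu$. Under this identification,
\[
\Gamma^d([0,t]\times S)\stackrel{d}{=}\int_0^t \mathds{1}_{X^1(ds)\in S}\,\mathrm{d}s=\frac1d\int_0^{dt}\mathds{1}_{X^1(r)\in S}\,\mathrm{d}r = t\cdot\frac{1}{dt}\int_0^{dt}\mathds{1}_{X^1(r)\in S}\,\mathrm{d}r.
\]

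Next I would invoke the ergodic theorem. The chain $X^1$ lives on the finite state space $E_1=((\Z/m\Z)^2)^N$ and, being a product of $N$ independent irreducible symmetric walks, is irreducible with unique invariant measure $\pi=(\mathcal{U}(\Z/m\Z)^2)^{\otimes N}$; hence it is ergodic from the initial law $\nu_x$. The continuous-time ergodic theorem then yields, almost surely,
\[
\frac{1}{T}\int_0^{T}\mathds{1}_{X^1(r)\in S}\,\mathrm{d}r\xrightarrow[T\to+\infty]{}\pi(S).
\]
Taking $T=dt$ and multiplying by $t$ shows that the right-hand side of the previous display converges almost surely to $t\,\pi(S)$ as $d\to+\infty$. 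Almost sure convergence implies convergence in distribution, and since the two sides are equal in law this proves the lemma.

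The only point requiring care is the justification of this ergodic step, and the cleanest self-contained alternative (avoiding a direct appeal to the continuous-time ergodic theorem) is the moment method: convergence in probability to the constant $t\,\pi(S)$ follows from $\E[\Gamma^d]\to t\,\pi(S)$ and $\mathrm{Var}(\Gamma^d)\to 0$. For the mean one writes $\E[\Gamma^d]=\int_0^t\PP(X^1(ds)\in S)\,\mathrm{d}s$ and uses the exponential convergence to equilibrium of the finite chain, $|\PP(X^1(r)\in S)-\pi(S)|\le C e^{-\gamma r}$, to bound $|\E[\Gamma^d]-t\,\pi(S)|\le C/(\gamma d)$. For the variance one expands $\E[(\Gamma^d)^2]=\int_0^t\int_0^t\PP(X^1(ds)\in S,\,X^1(du)\in S)\,\mathrm{d}s\,\mathrm{d}u$, applies the Markov property together with the same mixing bound to show that the integrand tends to $\pi(S)^2$ for $s\neq u$, and concludes by dominated convergence that $\E[(\Gamma^d)^2]\to (t\,\pi(S))^2$. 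The main (mild) obstacle is therefore not conceptual but bookkeeping: controlling the exponential ergodicity constant $\gamma$ and isolating the short-time regime $ds=O(1)$, whose contribution to the averages is $O(1/d)$ and vanishes in the limit.
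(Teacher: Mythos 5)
Your argument is essentially the paper's own proof: the same time-change identity $(X^d(s))_s\stackrel{d}{=}(X^1(ds))_s$ followed by the ergodic theorem for the irreducible finite-state chain on $E_1$, giving almost sure convergence of the time average to $\pi(S)$ and hence convergence in law. The additional moment-method paragraph is a correct optional alternative, but the core route matches the paper.
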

\begin{proof}

	Since $(X^1(td))_t$ and $(X^d(t))_t$ have the same generator and the same initial condition then $(X^1(td))_t \overset{\mathcal{L}}{=} (X^d(t))_t$.
	
	But by the ergodic theorem for Continuous Time Markov Chain, ($(X^1(s))_s$ is ergodic as a random walk on a finite torus) we have that: 
	\[
	t\frac{1}{d\,t}\int_{0}^{d\,t} \mathds{1}_{X^1(s)\in S} \text{d}s \underset{d \to +\infty}{\overset{a.s}{\longrightarrow}} t\,\pi(S).
	\]
	And finally we get: 
	\[
	\Gamma^d([0,t] \times S) \underset{d \to + \infty}{\overset{\mathcal{L}}{\longrightarrow}}t\, \pi(S).
	\]
	
\end{proof}

\subsection{Kurtz theorem}
Let us introduce some notations. We denote by $\ell_b(E_1)$ the set of locally finite measures $\mu$ on $[0, +\infty) \times E_1$ such that $\mu([0,t] \times E_1) = t$. We endow $\ell_b(E_1)$ with the metric $\hat{\rho}$ defined as follows. For $\mu \in \ell_b(E_1)$, we denote $\mu^t = \mu_{|[0,t]\times E_1}$ and let $\rho^t$ be the Prokhorov distance on the space of finite measures of $[0,t]\times E_1$. Then we set
\[
\hat{\rho}(\mu,\nu) := \int_{0}^{+\infty} \inf(e^{-t},\rho^t(\mu^t,\nu^t))\text{d}t.
\]

\begin{theorem}\label{Homogen Kurtz}
	Let $E_1$ and $E_2$ be two complete, separable metric spaces, and let $E = E_1 \times E_2$. For each $d\in \N^*$, let $(X^d(t),Y^d(t))_t$ be a stochastic process with sample paths in $D(\R_+,E)$ adapted to a filtration $(\mathcal{F}_t^d)_t$.\\
	\begin{itemize}
		\item [(A1)] Assume that for each $\epsilon  >0$, $T>0$, there exists a compact set $K \subset E_2$ such that: 
		\[
		\inf_{d > 0} \PP\lbrace \forall t\leq T,\  Y^d(t) \in K \rbrace \geq 1 - \epsilon.
		\]
		\item[(A2)] Assume that $\lbrace \mathcal{L}(X^d(t)): t \geq 0,\  d\in \N^* \rbrace$ is relatively compact (for the Prokhorov metric).
		\item[(A3)] Suppose there is an operator $\mathcal{A}: \mathcal{D}(\mathcal{A}) \subset \overline{C}(E_2) \to C(E_1 \times E_2)$ such that for $f \in \mathcal{D}(\mathcal{A})$ there is a process $\epsilon_d^f$ for which: \[
		f(Y^d(t)) - \int\limits_0^t \mathcal{A}f(X^d(s),Y^d(s))\text{d}s + \epsilon^f_d(t)
		\] is a $(\mathcal{F}^d_t)_t$-martingale.
		
		\item[(A4)] Suppose that for each $f \in \mathcal{D}(\mathcal{A})$ and for each $T >0$ there exists $p>1$ such that: \[
		\sup_{d\in \N^*} \E\left[ \int\limits_0^T \left| \mathcal{A}f(X^d(t),Y^d(t))\right|^p \text{d}t\right] < +\infty.
		\]
		\item[(A5)] Suppose that for each $f \in \mathcal{D}(\mathcal{A})$ and for each $T>0$ \[
		\lim_{d \to + \infty} \E[\sup_{t \leq T} |\epsilon^f_d(t)|] = 0.
		\]
	\end{itemize}
	Let $\Gamma^d$ be such that: (for every  Borel set $S$ of $E_1$) $\Gamma^d ([0,t] \times S) = \int\limits_0^t \mathds{1}_{X^d(s) \in S} \text{d}s $.\\
	Then we get that $(Y^d,\Gamma^d)_d$ is relatively compact in $D(\R_+,E_2) \times \ell_b(E_1)$.\\
	For any $(Y,\Gamma)$ limit point of $(Y^d,\Gamma^d)_d$, there exists a filtration $(\mathcal{G}_t)_t$ defined by:
	\[
	\mathcal{G}_t = \sigma\left(Y(s),\Gamma([0,s]\times H): s \leq t, H \text{ Borel set of }E_2\right)
	\] such that: for all $f \in \mathcal{D}(\mathcal{A})$ 
	\[
	f(Y(t)) - \int\limits_{[0,t] \times E_1} \mathcal{A}f(x,Y(s)) \Gamma(\text{d}s \times \text{d}x)
	\]
	is a $(\mathcal{G}_t)_t$-martingale.
\end{theorem}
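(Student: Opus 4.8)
The statement is Kurtz's averaging theorem, so I would follow the martingale-problem route. The plan has four steps: (i) prove that the pair $(Y^d,\Gamma^d)_d$ is relatively compact; (ii) extract a convergent subsequence and realize it almost surely via Skorokhod's representation theorem; (iii) rewrite the integral term of (A3) through the occupation measure $\Gamma^d$ and pass to the limit; (iv) check that the martingale property survives with respect to the filtration $(\mathcal{G}_t)_t$.

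\textbf{Relative compactness.} For $\Gamma^d$ I would use that $\Gamma^d([0,t]\times E_1)=t$ for every $d$, so the restrictions $(\Gamma^d)^t$ are finite measures of fixed total mass $t$; together with the compact-containment of the marginals in (A2) this gives, via Prokhorov on each time-slice and the definition of $\hat{\rho}$, relative compactness in $\ell_b(E_1)$ (in the present application $E_1$ is the compact torus, so this is automatic). For $Y^d$ I would use the compact-containment condition (A1) together with the decomposition in (A3): writing
\[
M^f_d(t)=f(Y^d(t))-\int_0^t \mathcal{A}f(X^d(s),Y^d(s))\,\mathrm{d}s+\epsilon^f_d(t),
\]
the moment bound (A4) controls the finite-variation part and (A5) controls $\epsilon^f_d$, so that each real process $f(Y^d(\cdot))$ is tight in $D(\R_+,\R)$ by an Aldous--Rebolledo criterion. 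Combined with (A1), this yields tightness of $(Y^d)_d$ in $D(\R_+,E_2)$, and hence joint relative compactness of the pair.

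\textbf{Passage to the limit.} Along a subsequence $(Y^{d_k},\Gamma^{d_k})\to(Y,\Gamma)$ in distribution; by Skorokhod's representation I may assume the convergence holds almost surely on a common probability space. The key algebraic step is the occupation-measure identity
\[
\int_0^t \mathcal{A}f(X^{d}(s),Y^{d}(s))\,\mathrm{d}s=\int_{[0,t]\times E_1}\mathcal{A}f(x,Y^{d}(s))\,\Gamma^{d}(\mathrm{d}s\times\mathrm{d}x),
\]
which transfers the dependence on the fast variable into an integral against $\Gamma^{d}$. I would then show this converges to $\int_{[0,t]\times E_1}\mathcal{A}f(x,Y(s))\,\Gamma(\mathrm{d}s\times\mathrm{d}x)$: the almost-sure joint convergence of $Y^{d_k}$ and $\Gamma^{d_k}$, together with the continuity of $\mathcal{A}f$ on $E_1\times E_2$, gives convergence of the integrals, while the $L^p$-bound (A4) with $p>1$ provides the uniform integrability needed to upgrade this to $L^1$-convergence. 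Since $f$ is continuous and bounded, $f(Y^{d_k}(t))\to f(Y(t))$ at every continuity point of $Y$ (hence for a.e. $t$), and (A5) makes $\epsilon^f_{d_k}$ vanish in the limit.

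\textbf{Martingale property and main obstacle.} Finally I would fix $0\le s_1<\dots<s_n\le s<t$ and a bounded continuous functional $\Phi$ of the data $(Y(s_\ell),\Gamma([0,s_\ell]\times H_\ell))_\ell$ and pass to the limit in $\E[\Phi\,(M^f_{d_k}(t)-M^f_{d_k}(s))]=0$. Since the integrand is $(\mathcal{F}^{d_k}_t)$-adapted and $\Phi$ is a functional of $\mathcal{G}_s$-measurable quantities, the pre-limit expectation vanishes, and the uniform integrability from (A4)--(A5) lets the expectation pass to the limit, yielding the $(\mathcal{G}_t)_t$-martingale property of $f(Y(t))-\int_{[0,t]\times E_1}\mathcal{A}f(x,Y(s))\,\Gamma(\mathrm{d}s\times\mathrm{d}x)$. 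I expect the main difficulty to be exactly step (iii): controlling the limit of the random integrand $\mathcal{A}f(\cdot,Y^{d_k}(\cdot))$ against the random measures $\Gamma^{d_k}$ when $\mathcal{A}f$ is only continuous (possibly unbounded) and $Y^{d_k}$ converges merely in the Skorokhod sense, so that the joint continuity of $(x,y)\mapsto\mathcal{A}f(x,y)$ must be combined carefully with the uniform integrability from (A4) to justify interchanging limit and integral.
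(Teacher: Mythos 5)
You should first note that the paper itself gives no proof of this statement: Theorem \ref{Homogen Kurtz} is imported verbatim from Kurtz's averaging paper \cite{kurtz1992averaging} and used as a black box, so there is no internal proof to compare yours against. Judged on its own terms, your sketch follows the standard (and indeed Kurtz's own) martingale-problem route --- tightness of the pair $(Y^d,\Gamma^d)$, Skorokhod representation, the occupation-measure identity $\int_0^t \mathcal{A}f(X^d(s),Y^d(s))\,\mathrm{d}s=\int_{[0,t]\times E_1}\mathcal{A}f(x,Y^d(s))\,\Gamma^d(\mathrm{d}s\times\mathrm{d}x)$, and passage to the limit in $\E[\Phi\,(M^f_d(t)-M^f_d(s))]=0$ --- and you correctly isolate the genuine difficulty, namely the interchange of limit and integral for the random integrand against the random measures, resolved by the $p>1$ uniform integrability from (A4).

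A few points would need to be made precise in a complete write-up. First, relative compactness in $\ell_b(E_1)$ is not quite ``automatic from (A2) via Prokhorov on each time-slice'': one needs the lemma (Kurtz, Lemma 1.1) identifying relative compactness of $\{\Gamma^d\}$ in $(\ell_b(E_1),\hat\rho)$ with tightness of the normalized slices $\{t^{-1}\Gamma^d([0,t]\times\cdot)\}$, which is what (A2) delivers through $\E[\Gamma^d([0,t]\times K^c)]\le \varepsilon t$. Second, for the tightness of $Y^d$ the framework the paper works in is Ethier--Kurtz Theorems 3.9.1 and 3.9.4 rather than Aldous--Rebolledo; either works, but 3.9.4 is tailored exactly to the data (A3)--(A5) (the pair $(f(Y^d),\mathcal{A}f(X^d,Y^d))$ lies in $\hat{\mathcal{A}}_d$ up to the error $\epsilon^f_d$). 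Third, in the limit passage the times $s_1<\dots<s_n\le s<t$ and the sets $H_\ell$ must be taken, respectively, outside the at most countable set of fixed discontinuities of the limit $(Y,\Gamma)$ and within a convergence-determining class of continuity sets, after which a monotone-class argument upgrades the identity to the full filtration $(\mathcal{G}_t)_t$. With these caveats your outline is a faithful reconstruction of the cited proof.
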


\subsection{End of the proof}

First let us check the different assumptions of the previous theorem.\\
\begin{itemize}
	\item (A1): 
	Let $T>0$ and $\varepsilon>0$. We use Definition \ref{Construct Probabiliste Process Markov } to $\mathcal{A}_g$ we have that the rate matrix giving $\mathcal{A}_g$ is: $(\mathcal{A}_g((y,z),(y',z')))_{(y,z),(y',z') \in E_2}$ such that: for all $((y_1,z_1),\dots, (y_N,z_N)) = (y,z) \neq (y',z') = ((y'_1,z'_1),\dots, (y'_N,z'_N)) \in E_2$ \[\mathcal{A}_g((y,z),(y',z')) = \frac{1}{2} \mathds{1}_{x_i = x_j} \mathds{1}_{\substack{y_i > 0\\ y_j >0}} H((y,z),(y',z'))\] where 
	\[
	H((y,z),(y',z')) = \left\lbrace \begin{array}{ll}
\mathds{1}_{z_i = z_j = 0} & \text{if } y'=y + \mathbf{R}e_i + \mathbf{R}e_j\\
2\mathds{1}_{z_i = 1, z_j = 0} & \text{if } y' = y + \textbf{T}e_i - \textbf{S}e_j \smallskip\\
\frac{1}{2} \mathds{1}_{z_i = z_j = 1} & \text{if } y' = y -2\textbf{P} e_i \medskip\\
\frac{1}{2} \mathds{1}_{z_i = z_j = 1} & \text{if } y' = y - 2\textbf{P} e_j
	\end{array}\right.
	\]
	
	We notice that for all $(y,z) \in E_2$ $|\mathcal{A}_g((y,z),(y,z))| \leq \frac{vN^2}{2}$. This means that the jumping rates of $Y^d$ are bounded by $\frac{vN^2}{2}$ which doesn't depend neither on $d$ nor $t$ (because only $\mathcal{A}_g$ makes $Y^d$ change). Then the number of jumps of $Y^d$ until time $t$ denoted $\mathcal{N}_t^d$ is stochastically upper bounded as a process by a Poisson process denoted $(\mathcal{N}_t)_t$ with parameter $ \frac{vN^2}{2}$.
	
	Since we have: for all $n \in \N^*$
	\[
	\PP(\mathcal{N}_T > n) \leq \sum\limits_{k\geq n} \frac{(vTN^2)^k}{2^k k!}e^{-\frac{vTN^2}{2}} \underset{n \to + \infty}{\longrightarrow} 0
	\]
	Then $\exists n_\varepsilon \in \N^*$ such that \begin{eqnarray}\label{Proof Homogen Taux fixed implique nombre de saut controle}
	\PP(\mathcal{N}_T \leq n_\varepsilon) > 1 - \frac{\varepsilon}{2}.
	\end{eqnarray} Moreover at each jump the variation of $Y^d$ is bounded. Let us denote $ M:= \max(2\mathbf{P},\mathbf{R},\mathbf{S},\mathbf{T})$ this bound which is independent from $t,T,d$.\\
	Moreover since $\nu_y$ is probability measure in $E_2 = (\R_+ \times \lbrace 0,1 \rbrace)^N$ then $\nu_y$ is tight and it exists a compact $[\alpha,\beta] \subset \R_+$ such that: \begin{eqnarray}\label{Proof Homogen proa implique contenu dans un compact}
	\nu_y([\alpha,\beta] \times \lbrace0,1 \rbrace) \geq 1 - \frac{\varepsilon}{2}.
	\end{eqnarray}
	 And we have that \[\PP(\forall t \in [0,T],\  (Y^d(t),z) \in [\alpha - n_\varepsilon M ,\  \beta + n_\varepsilon M] \times \lbrace 0,1\rbrace )\] is upper bounded by: 
	 \[
	 \PP(Y^d(0) \in [\alpha,\beta] \text{ and } \forall t \in [0,T],\  (Y^d(t),z) \in [Y^d(0) - n_\varepsilon M ,\  Y^d(0) + n_\varepsilon M] \times \lbrace 0,1\rbrace ).
	 \]
	 Then by conditioning by the initial wealth, this last expression is equal to: 
	 \[
	 \PP(Y^d(0) \in [\alpha,\beta], \ \mathcal{N}^d_T \leq n_\varepsilon) \ = \  \ \PP(Y^d(0) \in [\alpha,\beta])\, \PP(\mathcal{N}^d_T \leq n_\varepsilon | Y^d(0) \in [\alpha,\beta])
	 \]
	 But since for all $y_0 \in \R_+$ $\Vert\mathcal{A}_g(y_0,.)\Vert_\infty \leq \frac{N^2 v}{2}$ we have that 
	 \[	 
	 \PP(\mathcal{N}^d_T \leq n_\varepsilon | Y^d(0) \in [\alpha,\beta]) \leq \PP(\mathcal{N}_T \leq n_\varepsilon).
	 \]
	 Finally using (\ref{Proof Homogen Taux fixed implique nombre de saut controle}) and (\ref{Proof Homogen proa implique contenu dans un compact}) we have 
	 \[
	 \inf_{d \in \N} \PP(\forall t \in [0,T],\  (Y^d(t),z) \in [\alpha - n_\varepsilon M ,\  \beta + n_\varepsilon M] \times \lbrace 0,1\rbrace) \geq (1 - \frac{\varepsilon}{2}) (1 - \frac{\varepsilon}{2}) \geq  (1 - {\varepsilon})
	 \]

	So we have (A1).
	\item (A2): Since $\forall d\in \N^*,\, \forall t \geq 0, \  X^d(t)$ is a random variable taking value in $E_2$ compact. Then Prokhorov theorem gives that  $\lbrace \mathcal{L}(X^d(t)), d\in \N^*,t  \geq 0 \rbrace$ is a relatively compact set of probability measures and we get (A2).
	\item (A3): Since for each positive integer $d$, $(X^d(t),Y^d(t),z)_t$ is a Markovian process with generator $\mathcal{A}$ and domain $\overline{C}(E)$, then we have for $f \in \overline{C}(E)$:
	\[
	f(X^d(t),Y^d(t),z) - f(X^d(0),Y^d(0),z) - \int\limits_0^t \mathcal{A} f (X^d(s),Y^d(s),z) \text{d}s
	\]
	is a $(\mathcal{F}_t^d)_t$-martingale.\\
	In particular $f(X^d(t),Y^d(t),z) - \int\limits_0^t \mathcal{A} f (X^d(s),Y^d(s),z) \text{d}s$
	is a $(\mathcal{F}_t^d)_t$-martingale.\\
	And if $f$ is constant on the first variable we get: $f \in \overline{C}(E_2)$, $\mathcal{A} f \in \overline{C}(E_1 \times E_2)$ and 
	\[
	f(Y^d(t),z) - \int\limits_0^t \mathcal{A} f (X^d(s),Y^d(s),z) \text{d}s
	\] is a $(\mathcal{F}_t^d)_t$-martingale.
	So we have (A3) with $\epsilon^f_d =0$.
	\item (A4): $\forall f \in \overline{C}(E)$ constant on the first variable and for all $x \in E_1,(y,z) \in E_2$ we have: \[\left|\mathcal{A}f(x,y,z)\right|^p  \leq (N^2 \frac{v}{2} \,8||f||_\infty)^p.\]
	So we get $\forall f \in \overline{C}(E)$ constant on the first variable: 
	\[
	\sup_{d >0} \E(\int\limits_0^T \left|\mathcal{A} f(X^d(t),Y^d(t),z)\right|^p \text{d}s) \leq (N^2 \frac{v}{2} \,8||f||_\infty)^p \,T < +\infty.
	\]
	So we get (A4).
	\item (A5):We have $\epsilon^f_d \equiv 0$, so we have (A5).
\end{itemize}
\textbf{Conclusion}\\
Applying Kurtz's theorem, we get that $((Y^d,z),\Gamma^d)_d$ is relatively compact with $\Gamma^d$ such that $\Gamma^d([0,t] \times S) = \int\limits_0^t \mathds{1}_{X^d(s) \in S} ds$ and for all limit points $((\overline{Y},z),\overline{\Gamma})$ there exists a filtration $(\mathcal{G}_t)_t$ such that $\forall f \in \overline{C}(E_2)$: 
\[
f(\overline{Y}(t),z) - \int\limits_0^t \int\limits_{E_1} \mathcal{A} f (x,\overline{Y}(s),z) \overline{\Gamma} (ds \times ds)
\]
is a $(\mathcal{G}_t)_t$-martingale.\\

But $\left( X_t^d \right)_t$ is a continuous-time simple symmetric random walk with jump speed $d$, so applying Lemma \ref{Temps d'occup1}, we get the following convergence: for all $t>0$, $S$ Borel set of $E_1$:
\[
\Gamma^d([0,t] \times S) \overset{\mathcal{L}}{\underset{d \to +\infty}{\longrightarrow}}  t\,\pi(S).
\]
and finally
\[
f(\overline{Y}(t),z) - \int\limits_0^t \int\limits_{E_1} \mathcal{A} f (x,\overline{Y}(s),z) \pi(\text{d}x)\text{d}s
\]
is a $(\mathcal{G}_t)_t$-martingale.\\
To finish we check the hypothesis of Theorem \ref{Uniqueness Martingale problem} to get that $(\overline{Y}(t),z)_t$ is a Markov process with generator 
\[
\overline{\mathcal{A}}f(y,z) = \int\limits_{E_1} \mathcal{A}f(x,y,z)\,\pi(\text{d}x) =  \frac{v}{2\, m^2} \mathcal{A}_g f(y,z)
\] and with domain $\overline{C}(E_1)$ and we have that uniqueness holds for the martingale problem.\\
Since $\overline{\mathcal{A}}$ is a bounded operator (bounded by $\frac{4 v N^2}{2 m^2}$) we have that for a $\lambda >  \frac{4 v N^2}{2 m^2}$, $|||\overline{\mathcal{A}}/\lambda ||| < 1$ and we get: \[
(\lambda Id - \mathcal{A})^{-1} = \frac{1}{\lambda} \sum_0^{+ \infty} \frac{\mathcal{A}^k}{k!}.
\]
So we have $\mathcal{R}(\lambda Id - \overline{\mathcal{A}}) = \overline{C}(E_1)$. Moreover $\overline{A}$ satisfies the positive maximum principle so using Lemma 2.1 p164 of Ethier Kurtz \cite{EK}, we have that for each $f \in \mathcal{D}(\overline{\mathcal{A}})$ and $\lambda>0$ we have: $\Vert \lambda f - \overline{\mathcal{A}}f \Vert \geq \lambda \Vert f \Vert$. So we have the hypothesis of Theorem \ref{Uniqueness Martingale problem}.
Since we have uniqueness for the martingale problem, we have that $(\overline{Y},z)$ is the unique weak limit point of $(Y^d,z)_d$. Finally we have that $(Y^d,z)_d$ converges in distribution to $(\overline{Y},z)$.

\section{Convergence to a mean field model}
\subsection{Framework and goal}

In this section we take the model from Theorem \ref{Thm Spatial model to Intermediate model}. The goal of this section is to show that when the number of players goes to $+\infty$, the wealth and the strategy of one particle is a non linear Markov process not depending on the wealth and strategies of the other particles. This kind of result is a propagation of chaos result. Before stating the theorem let us begin with some definitions and propositions.

Let us begin with one of the main notions which is the definition of $u$-chaotic sequence (from Saint Flour course of Sznitman \cite{sznitman1991topics})

\begin{definition}
    Let $F$ be a separable metric space and $(u_N)_N$ a sequence of probability measures such that $\forall N \in \N^*,$ $u_N$ is a symmetric probability on $F^N$. We say that $(u_N)_N$ is $u$-chaotic, with $u$ a probability measure on $F$, if for every $\phi_1,\dots, \phi_k$ continuous bounded functions of $F$ ($k \geq 1$),
	\begin{eqnarray}
	\lim\limits_{N \to +\infty} \int\limits_{F^N} \phi_1(x_1) \dots \phi_k(x_k) u_N(\text{d}x_1,\dots,\text{d}x_N) = \prod\limits_{i=1}^k \int\limits_F \phi_i(x) u(\text{d}x)
	\end{eqnarray}
\end{definition}
The meaning of this definition is if we look at a finite number of particles fixed, when the total number of particles goes to $+\infty$ these particles become independent.\\
To prove that a sequence is $u$-chaotic, Sznitman \cite{sznitman1991topics} also gives this useful proposition.

\begin{proposition}[ Prop 2.2 \cite{sznitman1991topics}]\label{2.2 Sznitman}
 Let $F$ be a metric separable space, and $(u_N)_N$ a sequence of symmetric probability measures on $F^N$.\\
	For all $N \in \N^*$, let $(X^N_1,\dots X^N_N)$ be a random vector with distribution $u_N$. 
	Let $u$ be a probability measure on $F$.\\
	
	We denote $\mu^N = \frac{1}{N} \sum_{i =1 }^{N} \delta_{X^N_i}$.
	\begin{enumerate}
		\item $(u^N)_N$ is $u$-chaotic if and only if
		\[\mathcal{L}(\mu^N) \underset{N \to +\infty}{\longrightarrow} \delta_u \qquad \text{ weakly in } \mathcal{P}(\mathcal{P}(F)) .\]
		\item When $F$ is Polish, $(\mathcal{L}(\mathcal{\mu}^N))_N$ is tight in $\mathcal{P}(\mathcal{P}(F))$ if and only $(\mathcal{L}(X_1^N))_N$ is tight.
	\end{enumerate}
\end{proposition}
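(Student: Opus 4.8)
The plan is to prove both assertions by transferring every statement about the joint laws $u_N$ into a statement about the empirical measure $\mu^N$, testing against bounded continuous functions. I fix once and for all a countable family $(\phi_\ell)_\ell \subset \overline{C}(F)$ that is convergence determining for the weak topology on $\mathcal{P}(F)$ (such a family exists because $F$ is separable metric), and I write $\langle m,\phi\rangle = \int_F \phi\, \mathrm{d}m$. The first observation for part (1) is that $\delta_u$ is a Dirac mass, so the weak convergence $\mathcal{L}(\mu^N)\to\delta_u$ in $\mathcal{P}(\mathcal{P}(F))$ is equivalent to $\mu^N\to u$ in probability in $\mathcal{P}(F)$, hence (through the metrizing family) to $\langle\mu^N,\phi\rangle\to\langle u,\phi\rangle$ in probability for every $\phi\in\overline{C}(F)$. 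Since $\langle\mu^N,\phi\rangle$ is bounded, this is in turn equivalent to the two scalar conditions $\E[\langle\mu^N,\phi\rangle]\to\langle u,\phi\rangle$ and $\E[\langle\mu^N,\phi\rangle^2]\to\langle u,\phi\rangle^2$.

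First I would prove that $u$-chaoticity implies this empirical convergence. By the exchangeability of $u_N$,
\[
\E[\langle\mu^N,\phi\rangle]=\E[\phi(X_1^N)],\qquad \E[\langle\mu^N,\phi\rangle^2]=\tfrac1N\E[\phi(X_1^N)^2]+\tfrac{N-1}{N}\E[\phi(X_1^N)\phi(X_2^N)].
\]
Applying the chaoticity definition with $k=1$ (to $\phi$) and with $k=2$ (to $\phi,\phi$) makes the right-hand sides converge to $\langle u,\phi\rangle$ and $\langle u,\phi\rangle^2$ respectively, the $\tfrac1N$ term vanishing because $\phi$ is bounded. This gives both scalar conditions, hence $\mathcal{L}(\mu^N)\to\delta_u$.

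Conversely, to recover chaoticity from $\mu^N\to u$ in probability, I fix $\phi_1,\dots,\phi_k\in\overline{C}(F)$ and expand
\[
\prod_{i=1}^k\langle\mu^N,\phi_i\rangle=\frac1{N^k}\sum_{j_1,\dots,j_k}\prod_{i=1}^k\phi_i(X_{j_i}^N).
\]
I would split the index sum into the $N(N-1)\cdots(N-k+1)$ tuples with pairwise distinct entries and the remaining $O(N^{k-1})$ tuples containing a repetition. By exchangeability every distinct-index term has the same expectation $\E[\prod_i\phi_i(X_i^N)]$, while the repeated-index terms contribute $O(1/N)$ after normalization since the $\phi_i$ are bounded. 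Taking expectations and using that $\prod_i\langle\mu^N,\phi_i\rangle\to\prod_i\langle u,\phi_i\rangle$ in probability, hence in mean by bounded convergence, yields $\E[\prod_i\phi_i(X_i^N)]\to\prod_i\langle u,\phi_i\rangle$, which is exactly the chaoticity condition.

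For part (2), the central quantity is the intensity (first moment) measure $I_N\in\mathcal{P}(F)$ of $\mathcal{L}(\mu^N)$, given by $I_N(B)=\E[\mu^N(B)]$ for Borel $B\subset F$; exchangeability gives $I_N(B)=\PP(X_1^N\in B)$, so $I_N=\mathcal{L}(X_1^N)$. I would then show that $(\mathcal{L}(\mu^N))_N$ is tight in $\mathcal{P}(\mathcal{P}(F))$ if and only if $(I_N)_N$ is tight in $\mathcal{P}(F)$. For the ``if'' direction, given $(I_N)_N$ tight, pick compacts $K_j\subset F$ with $\sup_N I_N(K_j^c)\le\varepsilon\,2^{-j}\eta_j$; Markov's inequality gives $\PP(\mu^N(K_j^c)>\eta_j)\le I_N(K_j^c)/\eta_j\le\varepsilon\,2^{-j}$, the set $\mathcal{K}=\{m:\ m(K_j^c)\le\eta_j\ \forall j\}$ is relatively compact in $\mathcal{P}(F)$ by Prokhorov's theorem, and a union bound yields $\inf_N\PP(\mu^N\in\mathcal{K})\ge 1-\varepsilon$. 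For the converse, if $\mathcal{L}(\mu^N)$ concentrates up to $\varepsilon$ on a compact $\mathcal{K}\subset\mathcal{P}(F)$, then $\mathcal{K}$ is a tight family, so a single compact $K$ with $\sup_{m\in\mathcal{K}}m(K^c)\le\varepsilon$ gives $I_N(K^c)=\E[\mu^N(K^c)]\le\varepsilon+\PP(\mu^N\notin\mathcal{K})\le 2\varepsilon$. The step I expect to be most delicate is precisely this part (2): translating relative compactness in the double space $\mathcal{P}(\mathcal{P}(F))$ into a tightness statement on $F$ requires applying Prokhorov at both levels and recognizing that the compacts it provides in $\mathcal{P}(F)$ are exactly the ``$m(K_j^c)\le\eta_j$'' sets, after which the Markov bound must be made uniform in $N$. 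By comparison, the moment computations in part (1) are routine once the distinct/repeated index split is in place.
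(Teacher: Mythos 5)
Your proof is correct, but note that the paper itself gives no proof of this statement: it is quoted verbatim as Proposition 2.2 of Sznitman's Saint-Flour course and used as a black box, so there is nothing internal to compare against. The argument you give (reduction of convergence to the Dirac mass $\delta_u$ to first and second moments of $\langle\mu^N,\phi\rangle$ via exchangeability, the distinct/repeated index split for the converse, and the identification of $\mathcal{L}(X_1^N)$ with the intensity measure of $\mathcal{L}(\mu^N)$ combined with Markov's inequality and Prokhorov's theorem at both levels for part (2)) is exactly the classical one from that reference; the only point worth making explicit is that your thresholds $\eta_j$ must be chosen tending to $0$ so that the set $\{m:\ m(K_j^c)\le\eta_j\ \forall j\}$ is genuinely uniformly tight.
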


To use the second item of the previous proposition we will need a tightness criterion for Markov processes. The two following theorems from Ethier Kurtz \cite{EK}, give a relative compactness criterion (for processes and in particular for Markov processes) using a martingale approach.
\begin{theorem}[Thm 3.9.1 \cite{EK}]\label{3.9.1 EK}
	Let $(F,r)$ be a Polish space, and let $(X_\alpha)_\alpha$ be a family of processes with sample paths in $D(\R_+,F)$. Suppose the compact containment condition holds \emph{i.e.} for every $\eta>0$ and $T >0$ there exists a compact set $K$ of $F$ such that:
	\[
	\inf_\alpha \PP(\forall t \leq T , X_\alpha(t) \in K) \geq 1 - \eta
	\]
	Let $H$ be a dense subset of the set of continuous bounded functions from $F$ to $\R$ for the topology of uniform convergence on compact sets. \\
	Then \\
	$(\mathcal{L}(X_\alpha))_\alpha$ is relatively compact if and only if $(\mathcal{L}(f \circ X_\alpha))_\alpha$ is relatively compact for each $f\in H$.
\end{theorem}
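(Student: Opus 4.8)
\emph{Strategy and the necessity direction.} The plan is to prove the two implications separately, with essentially all the content living in the sufficiency (``if'') direction. For \emph{necessity}, suppose $(\mathcal{L}(X_\alpha))_\alpha$ is relatively compact. For a continuous $f$, the composition map $\pi_f\colon D(\R_+,F)\to D(\R_+,\R)$, $\omega\mapsto f\circ\omega$, is continuous for the Skorokhod topologies: if $\omega_n\to\omega$ via time changes $\lambda_n\to\mathrm{id}$ with $\omega_n\circ\lambda_n\to\omega$ uniformly on compact time-intervals, then $f$ being uniformly continuous on the compact closure of the relevant range forces $f\circ\omega_n\circ\lambda_n\to f\circ\omega$ uniformly on compacts, i.e. $\pi_f(\omega_n)\to\pi_f(\omega)$. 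The continuous mapping theorem then transports relative compactness through $\pi_f$, giving relative compactness of $(\mathcal{L}(f\circ X_\alpha))_\alpha$ for each $f\in H$.

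\emph{Sufficiency: reduction to a modulus condition.} Now assume the compact containment condition and relative compactness of $(\mathcal{L}(f\circ X_\alpha))_\alpha$ for every $f\in H$. Since $F$ is Polish, Prokhorov's theorem makes relative compactness of $(\mathcal{L}(X_\alpha))_\alpha$ equivalent to tightness in $D(\R_+,F)$, and the relative-compactness criterion of \cite{EK} reduces this, \emph{given} compact containment, to a single modulus estimate: for all $T,\eta>0$ there is $\delta>0$ with $\sup_\alpha \PP\big(w'(X_\alpha,\delta,T)\ge\eta\big)\le\eta$, where $w'$ denotes the Skorokhod modulus of continuity. Thus it suffices to establish this estimate for the $F$-valued processes from the analogous (known) estimates for the real-valued images $f\circ X_\alpha$.

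\emph{Reduction to finitely many test functions and modulus transfer.} First I would extract a countable family $\{f_k\}\subset H$ separating the points of $F$; this is possible because the restriction of $H$ to any compact $K$ is dense in $\overline{C}(K)$, which is separable and separates points. The elementary lemma driving the transfer is: for every compact $K\subset F$ and every $\epsilon>0$ there exist finitely many $f_1,\dots,f_m\in\{f_k\}$ and $\gamma>0$ such that, for $x,y\in K$, $\max_{i\le m}|f_i(x)-f_i(y)|\le\gamma$ implies $r(x,y)\le\epsilon$; this is just the uniform continuity of the inverse of the continuous injection $x\mapsto(f_k(x))_k$ on the compact $K$, truncated to finitely many coordinates. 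Then, fixing $T,\eta$, I choose $K$ from compact containment so that $\PP(X_\alpha(t)\in K,\ \forall t\le T)\ge 1-\eta$ uniformly in $\alpha$, and take $f_1,\dots,f_m,\gamma$ from the lemma. On the compact-containment event, any partition on whose blocks each $f_i\circ X_\alpha$ oscillates by at most $\gamma$ forces $X_\alpha$ to oscillate by at most $\epsilon$ there. Since each $(\mathcal{L}(f_i\circ X_\alpha))_\alpha$ is relatively compact in $D(\R_+,\R)$, the one-dimensional modulus condition holds for each $f_i\circ X_\alpha$, and combining the finitely many of them produces the required $\delta$, closing the argument.

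\emph{Main obstacle.} The delicate point is the modulus transfer itself: I need a \emph{single} common partition of controlled mesh on which \emph{all} the $f_i\circ X_\alpha$ oscillate little simultaneously, whereas naive common refinements of near-optimal partitions for the individual $f_i\circ X_\alpha$ can shrink block lengths below $\delta$ and violate the mesh constraint in the definition of $w'$. This is exactly where the compact containment hypothesis is used (to make the uniform-separation lemma applicable) and is handled by the standard combination estimate for $w'$ in \cite{EK}, which bounds the modulus of a finite collection in terms of the individual moduli at a slightly smaller scale $\delta'$; the remainder of the argument is bookkeeping.
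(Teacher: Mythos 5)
This statement is quoted by the paper from Ethier--Kurtz without proof, so your attempt has to be judged against the known argument rather than anything in the text. Your necessity direction is fine (continuity of $\omega\mapsto f\circ\omega$ between Skorokhod spaces plus the continuous mapping theorem), as are the reduction to the $w'$-modulus condition and the finite separation lemma on a compact $K$. The genuine gap is exactly the step you flag as the ``main obstacle'' and then dismiss as bookkeeping: there is no ``standard combination estimate'' bounding the modulus $w'$ of a finite vector $(f_1\circ X_\alpha,\dots,f_m\circ X_\alpha)$ in terms of the individual moduli $w'(f_i\circ X_\alpha,\delta',T)$ at any smaller scale $\delta'$ depending only on $\delta$. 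Concretely, take $x=\mathds{1}_{[1,\infty)}$ and $y=\mathds{1}_{[1+h,\infty)}$: then $w'(x,\delta,T)=w'(y,\delta,T)=0$ for every $\delta$, while $w'((x,y),\delta,T)\geq 1$ as soon as $\delta>h$, because an admissible partition would need points at both $1$ and $1+h$, violating the mesh constraint. The family $X_\alpha=(\mathds{1}_{[1,\infty)},\mathds{1}_{[1+1/\alpha,\infty)})$ satisfies compact containment, each coordinate family is relatively compact in $D(\R_+,\R)$, yet $(X_\alpha)_\alpha$ is not relatively compact in $D(\R_+,\R^2)$; so no argument using only the individual relative compactness of finitely many separating functions can close the proof.

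What saves the theorem is that its hypothesis is much stronger than what you use: $(\mathcal{L}(f\circ X_\alpha))_\alpha$ is assumed relatively compact for \emph{every} $f$ in a set dense for uniform convergence on compacts, hence in particular for (approximations of) sums and linear combinations of the $f_i$. In the example above, $(f_1+f_2)\circ X_\alpha$ has two unit jumps at times $1$ and $1+1/\alpha$ and is not relatively compact, so the hypothesis fails there. The correct route --- the one taken in Ethier--Kurtz --- is first to upgrade to relative compactness of the vector $(f_1\circ X_\alpha,\dots,f_m\circ X_\alpha)$ in $D(\R_+,\R^m)$, using that Skorokhod convergence of each coordinate together with that of their sums forces joint convergence (if $x_n\to x$, $y_n\to y$ and $x_n+y_n\to x+y$ in $D(\R_+,\R)$, then $(x_n,y_n)\to(x,y)$ in $D(\R_+,\R^2)$); only then does one obtain a single partition controlling all coordinates simultaneously, to which your separation lemma can be applied. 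Without this step the proof does not go through.
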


\begin{theorem}[Thm 3.9.4 \cite{EK}]\label{3.9.4 EK}
	Let $(F,r)$ be an arbitrary Polish space.\\
	For each $\alpha$, let $X_\alpha$ be a process with sample path in $D(\R_+,F)$ defined on a probability space $(\Omega_\alpha,\mathcal{F}^\alpha,P_\alpha)$ and adapted to a filtration $(\mathcal{F}^\alpha_t)_t$. Let $\mathcal{P}_\alpha$ be the Banach space of bounded real-valued $(\mathcal{F}_t^\alpha)_t$-progressive processes with norm $\Vert Y \Vert = \sup_{t \geq 0} \E[|Y(t)|]<+\infty$.
	Let 
	\[
	\hat{\mathcal{A}}_\alpha = \left\lbrace (Y,Z) \in \mathcal{P}_\alpha \times \mathcal{P}_\alpha: \quad Y(t) - \int_0^t Z(s) \text{d}s  \text{ is an $(\mathcal{F}^\alpha_t)_t$-martingale}  \right\rbrace
	\]
	Let $C_a$ be a subalgebra of $\hat{C}(F)$ (e.g. the space of bounded, uniformly continuous functions with bounded support), and let $D$ be the collection of $f \in \hat{C}(F)$ such that for every $\varepsilon > 0$ and $T>0$ there exists $(Y_\alpha,Z_\alpha) \in \hat{\mathcal{A}}_\alpha$ with 
	\[
	\sup_\alpha \E\left(\sup\limits_{t \in [0,T] \cap \mathbb{Q}}  \left|Y_\alpha(t) -f(X_\alpha(t)) \right|\right) < \varepsilon
	\]
	and
	\[
	\sup_\alpha \E\left(\int_0^T \left| Z_\alpha (t)  \right|^p \text{d}t \right) < +\infty \qquad \text{for some }p \in (1,+\infty)
	\]
	If $C_a \subset \bar{D}$ for the sup norm then $(\mathcal{L}(f \circ X_\alpha))_\alpha$ is relatively compact for each $f \in C_a$.
\end{theorem}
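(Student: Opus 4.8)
The plan is to prove, for a single fixed $f\in C_a$, that the real-valued processes $(f\circ X_\alpha)_\alpha$ are relatively compact in $D(\R_+,\R)$; the conclusion is stated one function at a time, so this is all that is needed. For $\R$-valued càd-làg processes the classical Aldous--Rebolledo tightness criterion applies: relative compactness follows from a compact containment condition together with a uniform control of the oscillations across stopping times. Compact containment is automatic here because every $f\in\hat C(F)$ is bounded, so $\{\mathcal L(f(X_\alpha(t)))\}$ lives in the fixed compact interval $[-\|f\|_\infty,\|f\|_\infty]$ for all $t$. Thus the whole problem reduces to the oscillation estimate. Since $C_a\subset\bar D$ for the sup norm, given $\delta>0$ I first pick $g\in D$ with $\|f-g\|_\infty<\delta$; relative compactness in $D(\R_+,\R)$ is stable under uniform approximation (the Skorokhod metric is dominated by the sup metric, and $\sup_\alpha\|f\circ X_\alpha-g\circ X_\alpha\|_\infty\le\delta$), so it suffices to prove the oscillation estimate for $g\in D$.

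Fix $g\in D$, $\varepsilon>0$, $T>0$, and take the pair $(Y_\alpha,Z_\alpha)\in\hat{\mathcal A}_\alpha$ provided by the definition of $D$, so that $M_\alpha:=Y_\alpha-A_\alpha$ is an $(\mathcal F^\alpha_t)_t$-martingale, where $A_\alpha(t):=\int_0^tZ_\alpha(s)\,\text{d}s$, with $\sup_\alpha\E\big(\sup_{t\in[0,T]\cap\mathbb Q}|Y_\alpha(t)-g(X_\alpha(t))|\big)<\varepsilon$ and $C:=\sup_\alpha\E\big(\int_0^T|Z_\alpha|^p\big)<\infty$. The finite-variation part is easy: for a stopping time $\tau\le T$ and $\theta\le\delta$, Hölder's inequality gives $|A_\alpha(\tau+\theta)-A_\alpha(\tau)|\le\theta^{1-1/p}\big(\int_\tau^{\tau+\theta}|Z_\alpha|^p\big)^{1/p}$, whence $\E|A_\alpha(\tau+\theta)-A_\alpha(\tau)|\le\delta^{1-1/p}C^{1/p}\to0$ as $\delta\to0$, uniformly in $\alpha$. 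Hence $A_\alpha$ satisfies the Aldous condition.

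The main obstacle is the martingale part $M_\alpha$, because the hypotheses bound only the drift $A_\alpha$ and give no direct handle on $M_\alpha$, which need not even be square-integrable a priori. My approach is to exploit that $Y_\alpha$ is $\varepsilon$-close to the \emph{bounded} process $g(X_\alpha)$, so that $M_\alpha=Y_\alpha-A_\alpha$ is, up to the already-controlled term $A_\alpha$, essentially bounded; after a truncation/optional-stopping argument one estimates the conditional increment $\E[(M_\alpha(\tau+\theta)-M_\alpha(\tau))^2\mid\mathcal F^\alpha_\tau]$ by the corresponding increment of the quadratic variation, which is in turn dominated by the oscillation of the bounded $Y_\alpha$ plus the $A_\alpha$-contribution bounded above. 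This yields $\lim_{\delta\to0}\limsup_\alpha\PP(|M_\alpha(\tau+\theta)-M_\alpha(\tau)|>\eta)=0$ for each $\eta>0$. Adding the two parts furnishes the Aldous oscillation estimate for $Y_\alpha$, hence, via the $\varepsilon$-closeness, for $g\circ X_\alpha$, and finally for $f\circ X_\alpha$ through the uniform approximation of the first paragraph. Together with the automatic compact containment in $\R$, this gives relative compactness of $(\mathcal L(f\circ X_\alpha))_\alpha$, as claimed.
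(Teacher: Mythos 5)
First, a remark on scope: the paper does not prove this statement — it is quoted verbatim from Ethier and Kurtz (Theorem 3.9.4 of \cite{EK}) as an imported tool — so the comparison below is with the standard proof in that reference rather than with anything in this article.

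Your reductions are fine (compact containment is automatic since $f$ is bounded; passing from $f\in C_a$ to an approximating $g\in D$ is legitimate because tightness is stable under uniform approximation; the drift part $A_\alpha$ is handled correctly by H\"older). The genuine gap is exactly where you locate "the main obstacle": the martingale part. The claim that $\E\bigl[(M_\alpha(\tau+\theta)-M_\alpha(\tau))^2\mid\mathcal F^\alpha_\tau\bigr]$ is controlled by the increment of the quadratic variation, "which is in turn dominated by the oscillation of the bounded $Y_\alpha$ plus the $A_\alpha$-contribution", is not a valid bound. Since $A_\alpha$ is absolutely continuous, $[M_\alpha]=[Y_\alpha]$, and the quadratic variation of a process over a short interval is not dominated by its oscillation or its sup norm: a process making $n$ jumps of size $n^{-1/2}$ has oscillation $n^{-1/2}$ but quadratic variation $1$, and a bounded martingale can concentrate an $O(\Vert M\Vert_\infty^2)$ amount of $\E\bigl[[M]_{\tau+\theta}-[M]_\tau\bigr]$ in an arbitrarily short window. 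Moreover $M_\alpha$ is only known to be $L^1$-bounded, so the identity $\E[(M(\tau+\theta)-M(\tau))^2]=\E[[M]_{\tau+\theta}-[M]_\tau]$ you invoke is not available without a localization that you do not supply. Nothing in the hypotheses you actually use rules this behaviour out.

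The tell is that your argument never uses the hypothesis that $C_a$ is a \emph{subalgebra}, which is precisely what makes the second-moment estimate possible. The Ethier--Kurtz proof expands
$\E\bigl[(f(X_\alpha(t+u))-f(X_\alpha(t)))^2\mid\mathcal F^\alpha_t\bigr]
=\E\bigl[f^2(X_\alpha(t+u))-f^2(X_\alpha(t))\mid\mathcal F^\alpha_t\bigr]
-2f(X_\alpha(t))\,\E\bigl[f(X_\alpha(t+u))-f(X_\alpha(t))\mid\mathcal F^\alpha_t\bigr]$,
and then controls each conditional \emph{first} moment on the right by the corresponding pair $(Y_\alpha,Z_\alpha)$: for a first moment, the martingale part vanishes under conditioning and only the drift $\int_t^{t+u}Z_\alpha$ survives, which your H\"older estimate handles. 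This requires applying the approximation hypothesis to $f^2$ as well as to $f$ — hence the subalgebra assumption. Your route, which tries to bound the martingale increment in absolute value rather than killing it by conditioning, cannot close without reintroducing this idea.
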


One of the main tool for the proof is the non linear martingale problem. The following proposition from Graham \cite{graham2000chaoticity}, gives us a contraction condition to have unicity for a non-linear martingale problem. 
\begin{proposition}[Prop 2.3 \cite{graham2000chaoticity}] \label{2.3 Graham}
	Let $F$ be a Polish space.\\
	For $q,q'$ signed measures we define the total variation norm  $\Vert q \Vert$ by:
	\[
	\Vert q \Vert = \sup\limits_{\phi \text{ s.t. } \Vert \phi \Vert_\infty \leq 1 } \left|\int_F \phi(x) q(\text{d}x)\right| 
	\]
	and the total variation distance $\Vert q - q'\Vert$ by:
	\[
	\Vert q - q'\Vert = \sup\limits_{\phi \text{ s.t. } \Vert \phi \Vert_\infty \leq 1 } \left|\int_F \phi(x) q(\text{d}x) - \int_F \phi(x) q'(\text{d}x)\right| 
	\]
	For $q\in \mathcal{P}(f)$, let $\mathcal{A}(q): \phi \in \mathbb{L}^\infty(F) \mapsto \mathcal{A}(q)\phi \in \mathbb{L}^\infty(F)$ be given for some uniformly bounded positive measure kernel $J$ on $\mathcal{P}(F) \times F$ by:
	\[
	\forall \phi \in \mathbb{L}^\infty, \quad (\mathcal{A}(q)\phi)(x) = \int_F (\phi(y) -\phi(x)) J(q,x,\text{d}y).
	\] 
	Let $Q \in \mathcal{P}(D(\R_+,F))$ and $Q_s \in \mathcal{P}(F)$ such that for all $B\subset F,$ $Q_s(F) = Q(X(s) \in F)$ and 
	\[
	\forall \phi \in \mathbb{L}^\infty(F), \quad M_t^\phi = \phi(X(t)) - \phi(X(0)) - \int_{0}^{t} (\mathcal{A}(Q_s)\phi)(X(s)) \text{d}s
	\] define a $Q$-martingale (where $X$ is the canonical process defined by $X(t)(x) = x(t)$), and it solves the non-linear martingale problem starting at $q$ if moreover $Q_0 = q$.\\
	
	Assume that for some constants $\beta >0$ and $\kappa >0$,
	\[
	\Vert J(q,x)\Vert  \leq \beta \quad \text{and} \quad \Vert J(p,x) - J(q,x)\Vert  \leq \kappa\Vert p -q\Vert , \quad \forall p,q \in \mathcal{P}(F),\ \forall x \in F
	\]
	
	Then for any $q \in \mathcal{P}(F)$, there is a unique solution $Q$ in $\mathcal{P}(D(\R_+,F))$ for the above non-linear martingale problem starting at $q$. The solutions depend continuously on $q$.
\end{proposition}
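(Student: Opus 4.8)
The plan is to recast the non-linear martingale problem as a fixed-point problem and to solve it by a contraction in total variation, the boundedness constant $\beta$ giving well-posedness of the ``frozen'' linear problems and the Lipschitz constant $\kappa$ giving the contraction. First I would define a map $\Phi$ on $\mathcal{P}(D(\R_+,F))$: given a candidate law $P$ with time-marginals $(P_s)_s$, the operators $\mathcal{A}(P_s)$ no longer involve the unknown and form a time-inhomogeneous \emph{linear} jump generator. Since $\Vert J(q,x)\Vert \leq \beta$ for every $x$, this generator has total jump rate at most $\beta$, hence is bounded, and the associated linear martingale problem started at $q$ is well-posed: existence and uniqueness come from the explicit construction of Definition \ref{Construct Probabiliste Process Markov } (in its time-inhomogeneous form, by thinning a rate-$\beta$ Poisson clock) together with the uniqueness criterion of Theorem \ref{Uniqueness Martingale problem}. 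I set $\Phi(P)$ to be this unique solution; then a law $Q$ solves the non-linear martingale problem started at $q$ if and only if $Q = \Phi(Q)$. Since $\Phi(P)$ depends on $P$ only through the marginal flow $(P_s)_s$, it descends to a map $\Psi$ on the space of measurable flows $s \mapsto \rho_s \in \mathcal{P}(F)$, and it suffices to find a unique fixed flow.

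Next I would prove the contraction estimate for $\Psi$, which is the crux. Given two flows $(\rho_s)_s$ and $(\tilde\rho_s)_s$, I would couple the two bounded-rate jump processes by driving both with a single Poisson clock of rate $\beta$ and the same uniform marks, accepting a proposed jump according to $J(\rho_s,\cdot)$, resp. $J(\tilde\rho_s,\cdot)$. The two trajectories coincide until a mark first falls in the symmetric difference of the two acceptance sets, and the instantaneous rate of such a discrepancy is bounded by $\Vert J(\rho_s,x) - J(\tilde\rho_s,x)\Vert \leq \kappa\Vert \rho_s - \tilde\rho_s\Vert$. A Gronwall argument on the probability of disagreement then gives, for the time-$t$ marginals,
\[
\Vert \Psi(\rho)_t - \Psi(\tilde\rho)_t \Vert \leq C_\beta\,\kappa \int_0^t \Vert \rho_s - \tilde\rho_s \Vert \,\text{d}s ,
\]
with $C_\beta$ depending only on $\beta$. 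Introducing the weighted norm $\Vert \rho \Vert_\lambda := \sup_{t\geq 0} e^{-\lambda t}\Vert \rho_t\Vert$ and integrating, this forces $\Vert \Psi(\rho) - \Psi(\tilde\rho)\Vert_\lambda \leq (C_\beta \kappa / \lambda)\Vert \rho - \tilde\rho\Vert_\lambda$, so $\Psi$ is a strict contraction once $\lambda > C_\beta\kappa$.

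I would then conclude by the Banach fixed-point theorem: $\Psi$ has a unique fixed flow $(\rho_t^*)_t$ on all of $\R_+$, and the unique solution $Q$ of the non-linear martingale problem is the unique solution of the linear problem with generators $\mathcal{A}(\rho_t^*)$ started at $q$. Along the flow, mass and positivity are preserved—the kernels are conservative, so $\mathcal{A}(q)\mathbf{1} = 0$ and the master equation keeps $\langle \rho_t,\mathbf{1}\rangle$ constant—hence the solution stays in $\mathcal{P}(F)$. Continuous dependence on $q$ follows from the same estimate applied to two initial data $q,q'$: the Gronwall inequality gives $\Vert Q_t - Q'_t\Vert \leq e^{C_\beta\kappa\, t}\Vert q - q'\Vert$, i.e. Lipschitz—hence continuous—dependence.

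The step I expect to be the main obstacle is this contraction estimate: quantifying in total variation how the law of a bounded-rate pure-jump process responds to perturbing its time-dependent jump kernel. This is exactly where the hypothesis $\kappa$ enters, and making the coupling rigorous (or, equivalently, running the Gronwall estimate directly on the marginal master equation $\dro \langle \rho_t,\phi\rangle = \langle \rho_t, \mathcal{A}(\rho_t)\phi\rangle$) is the delicate point. The remaining ingredients—well-posedness of the frozen bounded-rate problems, preservation of probability, and the passage to the path law—are routine.
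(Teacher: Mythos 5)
This proposition is imported verbatim from Graham \cite{graham2000chaoticity} and the paper gives no proof of it, so there is nothing internal to compare against; your argument --- recasting the non-linear problem as a fixed point of the map that sends a marginal flow $(\rho_s)_s$ to the law of the frozen time-inhomogeneous bounded-rate jump process, and contracting in a time-weighted total variation norm via the Lipschitz hypothesis on $J$ --- is essentially Graham's own proof of his Proposition 2.3 and is sound. The points you flag as routine (well-posedness of the frozen linear problem with bounded total jump rate $\beta$, completeness of the weighted space of measurable flows, and recovering uniqueness of the path law from uniqueness of the marginal flow) are indeed routine, so I see no gap.
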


Now that we have the first tools we need, we can explain a bit more the model we consider then state the theorem we will prove.

\subsection{Statement of the theorem}
Before stating the theorem, let us reintroduce the model.\\
We use the model presented in Section \ref{Model} when $d$ goes to $+\infty$, so we consider the process introduced by Theorem \ref{Thm Spatial model to Intermediate model}. We can see this model either from the generator and initial distribution point of view, or with Poisson processes.\\
Theorem \ref{Thm Spatial model to Intermediate model} replaces when $d$ goes to $+\infty$: checking if  two particles have the same position before playing the game by running a Bernoulli variable of parameter $1/m^2$. If this variable is equal to 1 then $i$ and $j$ play a Prisoner's Dilemma Game, else nothing happens.\\

Let us denote $(Y^N(t),z^N)_t$ the Markov process where $N$ particles evolve according to the previous rules.\\
Let us recall that for a probability distribution on the sample path $\mu \in D(\R_+,\R_+ \times \lbrace 0,1 \rbrace)$ we denote for all Borel sets $S$ of $\R_+ \times \lbrace 0,1 \rbrace$ and for all $t\geq0$: \[
\mu_t(S)= \mu(X(t) \in S)
\]
In this part to have propagation of chaos, we will consider a model where we will slow the time. \\
Let $(\hat{Y}^N(t))_t$ be defined by $\hat{Y}^N(t)= Y^N(t/N)$ for all $t\geq0$ (we do not need to modify $z^N$ because $z^N$ is a constant process). So the generator of $(\hat{Y}^N,z^N)$ (with domain $\overline{C}(E_2)$) is: for all $(y,z) \in E_2$
	\begin{eqnarray} \label{Gene random matching Temps ralenti}
\overline{\overline{\mathcal{A}}}f(y,z) = \frac{1}{N}\frac{v}{m^2} \overline{\mathcal{A}_g} f (y,z)
	\end{eqnarray}	
with $\overline{\mathcal{A}_g}$ defined in (\ref{Generateur Modele intermediaire}).

\vspace{2pt}
\begin{theorem}\label{Thm proposition chaos Modèle inter to MF}

Let $\nu$ be a probability distribution of $\R_+ \times \lbrace 0, 1 \rbrace$.\\
Let $(\hat{Y}^N(t),z^N)_{t\geq0}$ be a stochastic process (relative to a filtration $(\mathcal{F}_t)_t$) with generator $\overline{\overline{\mathcal{A}}}$ and initial distribution $\nu ^{\otimes N}$\\

Then there exists a probability measure $\mu$ on ${D}(\R_+,\R_+\times \lbrace0,1 \rbrace)$ such that:\\
 the sequence of probability distributions $(\mathcal{L}(\hat{Y}^N,z^N))_N$ is $\mu$-chaotic.\\
The distribution $\mu \in \mathcal{P}(D(\R_+,\R_+))$ has non-linear generators $(L_{\mu_t})_t$ with domain $\overline{C}(\R_+ \times \lbrace 0,1 \rbrace)$ and initial distribution $\nu$.\\
For all $f\in \overline{C}(\R_+\times \lbrace 0,1\rbrace)$, $y\in \R_+$ and $z \in \lbrace 0,1 \rbrace$, $L_{\mu_t} f(y,z)$ is equal to:

\begin{eqnarray}\label{Non linear generator MF}
\frac{v}{m^2} \frac{1}{2} \mathds{1}_{y>0}\left[\begin{array}{l}
\ \ \mathds{1}_{z=0}\, \mu_t((0,+\infty)\times \lbrace 0 \rbrace) (f(y + \mathbf{R},z) - f(y,z))\smallskip\\
+\mathds{1}_{z=0}\, \mu_t((0,+\infty)\times \lbrace 1 \rbrace) (f(y - \mathbf{S},z) - f(y,z)) \smallskip\\
+\mathds{1}_{z=1}\, \mu_t((0,+\infty)\times \lbrace 0 \rbrace) (f(y + \mathbf{T},z) - f(y,z)) \smallskip\\
+\mathds{1}_{z=1}\, \mu_t((0,+\infty)\times \lbrace 1 \rbrace) \frac{1}{2}(f(y - 2\mathbf{P},z) - f(y,z))
\end{array}\right].
	\end{eqnarray}

\end{theorem}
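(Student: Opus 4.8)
The plan is to establish the $\mu$-chaoticity through the standard three-step program for propagation of chaos via martingale problems, as set up by the machinery already introduced (Propositions \ref{2.2 Sznitman} and \ref{2.3 Graham}, and Theorems \ref{3.9.1 EK}, \ref{3.9.4 EK}). First I would recast the generator $\overline{\overline{\mathcal{A}}}$ in the measure-dependent form. Observe that for a function $f$ depending only on the single coordinate $(y_i,z_i)$, applying $\overline{\overline{\mathcal{A}}}$ produces, after summing over the partner index $j$ and dividing by $N$, exactly the operator $L_{m^N(y,z)}f(y_i,z_i)$ plus the bounded self-interaction correction $h$ discussed in the introduction (the term allowing particle $i$ to play against itself, of order $1/N$). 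This rewriting is the key algebraic bridge: it shows that the one-particle dynamics are governed by $L$ evaluated at the empirical measure $m^N$, which is precisely the structure to which Graham's non-linear martingale framework applies with kernel $J(q,(y,z),\cdot)$ read off from \eqref{Interaction Seul Intro}.

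Second I would prove tightness. By Proposition \ref{2.2 Sznitman}(2), since $\R_+\times\{0,1\}$ is Polish, it suffices to show that $(\mathcal{L}(\hat Y_1^N,z_1))_N$ is tight, which via Theorem \ref{3.9.1 EK} reduces to compact containment plus relative compactness of the one-dimensional projections. Compact containment follows by the same jump-counting argument used for (A1) in the proof of Theorem \ref{Thm Spatial model to Intermediate model}: the total jump rate of a single particle under $\overline{\overline{\mathcal{A}}}$ is bounded uniformly in $N$ (the factor $1/N$ exactly compensates the $N$ possible partners), each jump moves the wealth by at most $M=\max(2\mathbf P,\mathbf R,\mathbf S,\mathbf T)$, and $\nu$ is tight, so the wealth stays in a fixed compact with high probability on $[0,T]$. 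The martingale relative-compactness criterion of Theorem \ref{3.9.4 EK} is then verified by taking $Y_\alpha(t)=f(\hat Y_1^N(t),z_1)$ and $Z_\alpha=\overline{\overline{\mathcal{A}}}f$, whose uniform boundedness gives the required $L^p$ control.

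Third I would identify any limit point and invoke uniqueness. Let $Q$ be a limit of $(\mathcal{L}(\mu^N))_N$ in $\mathcal{P}(\mathcal{P}(D(\R_+,\R_+\times\{0,1\})))$; by exchangeability and item (1) of Proposition \ref{2.2 Sznitman} it is enough to show $Q=\delta_\mu$ for a single deterministic $\mu$ solving the non-linear martingale problem. Passing to the limit in the martingale $f(\hat Y_1^N(t),z_1)-\int_0^t \overline{\overline{\mathcal{A}}}f\,\mathrm ds$, the empirical measure $m^N_s$ converges to the time-marginal $\mu_s$, the self-interaction term $h$ vanishes since $\|h\|_\infty\le 2\frac{v}{m^2}\|f\|_\infty/N\to 0$, and one obtains that $\mu$ solves the martingale problem for the non-linear generator $L_{\mu_t}$ with $\mu_0=\nu$. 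To conclude that $\mu$ is unique I would apply Graham's Proposition \ref{2.3 Graham}: the kernel $J$ extracted from \eqref{Interaction Seul Intro} is uniformly bounded by $\beta=\frac{v}{m^2}$, and it is Lipschitz in the measure argument because $L_m$ depends on $m$ only linearly through the masses $m((0,+\infty)\times\{0\})$ and $m((0,+\infty)\times\{1\})$, giving $\|J(p,\cdot)-J(q,\cdot)\|\le\kappa\|p-q\|$ with an explicit $\kappa$. Uniqueness then forces $Q=\delta_\mu$, which by Proposition \ref{2.2 Sznitman}(1) is exactly $\mu$-chaoticity.

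The main obstacle I anticipate is the limit-identification step, specifically justifying that the nonlinear term $\int_0^t L_{m^N_s}f\,\mathrm ds$ converges to $\int_0^t L_{\mu_s}f\,\mathrm ds$ along the converging subsequence. The difficulty is that this couples the randomness of the empirical measure with the test function evaluated along the path, so one cannot simply pass to the limit termwise; the cleanest route is to exploit the linearity of $m\mapsto L_m f$ in the masses of the two half-lines, reducing the question to the continuity (in the Skorokhod limit) of the functionals $s\mapsto m^N_s((0,+\infty)\times\{0,1\})$, and then to handle the discontinuity set $\{y=0\}$ of the indicator $\mathds 1_{y>0}$ by arranging that $\mu_s$ charges no atom at the boundary, or by a standard approximation of the indicator by continuous functions. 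Everything else is bookkeeping atop the cited theorems.
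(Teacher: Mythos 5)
Your overall architecture (tightness of the one\nobreakdash-particle law, hence of the empirical measure via Proposition \ref{2.2 Sznitman}; identification of limit points as solutions of the non-linear martingale problem; uniqueness via Proposition \ref{2.3 Graham} with the Lipschitz bound on $J$) matches the paper's, and your treatment of tightness, of the $O(1/N)$ self-interaction term $h$, and of the uniqueness step is essentially what the paper does. But there is a genuine gap in your limit-identification step, and you have misdiagnosed where the real difficulty lies. You write that one "passes to the limit in the martingale $f(\hat Y_1^N(t),z_1)-\int_0^t \overline{\overline{\mathcal{A}}}f\,\mathrm ds$" and obtains that $\mu$ solves the non-linear martingale problem; as stated this is circular, because the drift term involves the empirical measure $\mu^N$, whose convergence to the law of the single tagged particle is precisely the chaoticity you are trying to prove. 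The correct (and the paper's) route is to test the martingale property under the random measure $\mu^N$ itself, i.e.\ to show $\E\left|\mathcal{G}(\mu^N)\right|\to 0$ where
\[
\mathcal{G}(R)=\left\langle R,\bigl(M^f(T)-M^f(t)\bigr)\,g\bigl((X(t_1),Z(t_1)),\dots,(X(t_k),Z(t_k))\bigr)\right\rangle,
\]
and $\mathcal{G}(\mu^N)$ is the average $\frac1N\sum_i (M_i^{f,N}(T)-M_i^{f,N}(t))g_i^N$ plus the error terms you did identify.

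The missing idea is the control of that average. Each summand has zero conditional expectation, but $\E|\cdot|$ of the average is estimated through its second moment, which expands into $N(N-1)$ off-diagonal cross terms $\E\bigl(M_i^{f,N}M_j^{f,N}g_i^Ng_j^N\bigr)$ with $i\neq j$; for the sum to vanish each such term must itself be $O(1/N)$. This is the content of the paper's Lemma \ref{Controle Martingale}: writing $M_i^{f,N}$ as a stochastic integral against the compensated Poisson processes $\mathcal{M}^{i,j}$ of intensity $\frac{v}{2m^2N}$, the quadratic covariation $\langle M_1^{f,N},M_2^{f,N}\rangle$ receives contributions only from the shared clock $\mathcal{M}^{1,2}$, whose bracket is $\frac{v}{2m^2}\frac{t}{N}$, giving $\E(M_1^{f,N}(t)M_2^{f,N}(t))\leq C^f/N$. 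This decorrelation estimate is the quantitative heart of the propagation of chaos and is absent from your proposal; the obstacle you do anticipate (continuity of $m\mapsto L_mf$ and the boundary $\{y=0\}$) corresponds only to the secondary term $B$ in the paper's decomposition and to the a.s.\ continuity of $\mathcal{G}$ outside a countable set of times. Without the covariance lemma the argument does not close.
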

\vspace{4pt}
In order to use, and simulate the mean field process on an easier way, we also have the following evolution equations.

\begin{corollary}\label{Corol Evolution Equation}
	A process $(Y(t),z)_t$ whose trajectories have distribution $\mu$ (of generator $(L_{\mu_t})_t$ with domain $\overline{C}(\R_+ \times \lbrace0,1 \rbrace)$ and initial distribution $\nu$) satisfies the following evolution equations: with $\mathcal{L}(Y(0),z) = \nu$, for all $y \in \R_+^*$.
	
	\begin{eqnarray} \label{Evol B}
	\begin{array}{rcl}
	\frac{2 m^2}{v}\dro \PP(Y(t) = y,z =0) & = & \ \ \PP(Y(t) = y - \mathbf{R},z= 0)\, \PP(Y(t)>0,z=0)\mathds{1}_{y>\mathbf{R}}\smallskip\\
	& & + \PP(Y(t) = y + \mathbf{S}, z=0)\, \PP(Y(t) >0, z=1)\smallskip\\
	& & - \PP(Y(t)=y,z = 0)\,\PP(Y(t)>0)
	\end{array}
	\end{eqnarray}

	\begin{eqnarray} \label{Evol R}
	\begin{array}{rcl}
	\frac{2 m^2}{v} \dro \PP(Y(t) = y,z =1) & = &\ \ \ \  \PP(Y(t) = y - \mathbf{T},z= 1)\, \PP(Y(t)>0,z=0)\mathds{1}_{y >\mathbf{T}}\smallskip\\
	& & + \frac{1}{2}\PP(Y(t) = y + 2\mathbf{P}, z=1) \,\PP(Y(t) >0, z=1)\smallskip\\
	& & + \frac{1}{2}\PP(Y(t) = y, z=1)\, \PP(Y(t) >0, z=1)\smallskip\\
	& & - \ \ \PP(Y(t)=y,z = 1)\,\PP(Y(t)>0)
	\end{array}
	\end{eqnarray}
	
\end{corollary}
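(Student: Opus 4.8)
The plan is to derive (\ref{Evol B}) and (\ref{Evol R}) as the forward (Kolmogorov) equations associated with the non-linear generator $L_{\mu_t}$ of (\ref{Non linear generator MF}). Since $(Y(t),z)_t$ has law $\mu$ and $\mu$ solves the non-linear martingale problem for $(L_{\mu_t})_t$, for every $f\in\overline{C}(\R_+\times\lbrace0,1\rbrace)$ the process $f(Y(t),z)-\int_0^t L_{\mu_s}f(Y(s),z)\,\mathrm{d}s$ is a martingale; taking expectations and differentiating in $t$ yields the weak forward equation
\[
\dro\, \E[f(Y(t),z)] = \E\!\left[L_{\mu_t}f(Y(t),z)\right].
\]
First I would observe that, because every transition changes $y$ only by one of the fixed amounts $\mathbf{R},\mathbf{S},\mathbf{T},2\mathbf{P}$, the wealth $Y(t)$ stays on the countable lattice generated by the support of $\nu$ and these increments. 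Hence the point masses $\PP(Y(t)=y,z)$ are meaningful and I may legitimately insert the indicator test functions $f=\mathds{1}_{\lbrace(y,0)\rbrace}$ and $f=\mathds{1}_{\lbrace(y,1)\rbrace}$ (approximating them by continuous bump functions supported near the isolated lattice point $y$ and passing to the limit if one wishes to stay within $\overline{C}$).

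Next I would evaluate the right-hand side for these indicators, using the identifications $\mu_t((0,+\infty)\times\lbrace0\rbrace)=\PP(Y(t)>0,z=0)$, $\mu_t((0,+\infty)\times\lbrace1\rbrace)=\PP(Y(t)>0,z=1)$ and $\PP(Y(t)>0)=\mu_t((0,+\infty)\times\lbrace0,1\rbrace)$. For $f=\mathds{1}_{\lbrace(y,0)\rbrace}$ with $y>0$, the quantity $L_{\mu_t}f(y',0)$ is nonzero only when $y'$ is a state from which a single cooperator transition lands on $y$: either $y'=y-\mathbf{R}$ (a reward gain, requiring the source wealth to be positive, i.e.\ $\mathds{1}_{y>\mathbf{R}}$), or $y'=y+\mathbf{S}$ (a sucker loss), or $y'=y$ itself (the outgoing mass). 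Collecting the in- and out-rates, each weighted by the prefactor $\tfrac{v}{2m^2}$ and the appropriate $\mu_t$-mass, reproduces (\ref{Evol B}) after multiplying by $\tfrac{2m^2}{v}$. The defector equation (\ref{Evol R}) is obtained in the same way from $f=\mathds{1}_{\lbrace(y,1)\rbrace}$, with $y'=y-\mathbf{T}$ (temptation gain, needing $\mathds{1}_{y>\mathbf{T}}$) and $y'=y+2\mathbf{P}$ (punishment loss, carrying the coin-flip factor $\tfrac12$).

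The one genuinely delicate point is the bookkeeping of the outflow term in (\ref{Evol R}). The true rate at which a live defector leaves state $y$ is $\tfrac{v}{2m^2}\bigl[\PP(Y(t)>0,z=0)+\tfrac12\PP(Y(t)>0,z=1)\bigr]$, the half appearing because in a defector--defector encounter the coin flip leaves the focal particle's wealth unchanged with probability $\tfrac12$. To match the stated form, I would rewrite this outflow as the full encounter rate $\tfrac{v}{2m^2}\PP(Y(t)>0)$ minus the compensating half $\tfrac{v}{2m^2}\cdot\tfrac12\PP(Y(t)>0,z=1)$; moving that compensation to the gain side produces exactly the $+\tfrac12\PP(Y(t)=y,z=1)\,\PP(Y(t)>0,z=1)$ line of (\ref{Evol R}). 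This self-loop accounting, together with the justification of indicator test functions on the (effectively discrete) state space, is the main thing to get right; the remainder is the routine term-by-term identification described above.
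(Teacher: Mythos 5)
Your proposal is correct and follows essentially the same route as the paper: apply the non-linear martingale property to the indicator test functions $\mathds{1}_{\lbrace(y,0)\rbrace}$ and $\mathds{1}_{\lbrace(y,1)\rbrace}$, take expectations to get the integrated forward equation, and differentiate, with your self-loop bookkeeping for the defector--defector outflow matching the paper's form of (\ref{Evol R}) exactly. The only point the paper makes explicit that you pass over quickly is the justification for differentiating in $t$ (it shows $t\mapsto\PP(Y(t)>u)$ is continuous via the jump-chain/Poisson decomposition), while you in turn supply a justification for the indicator test functions that the paper omits; both are minor and routine.
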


	\subsection{Preliminary work}
	We will adapt a proof of Bordenave and al. \cite{bordenave2007particle}.\\

    We will show that: there exists a probability measure $\mu$ on ${D}(\R_+,\R_+)$ such that the sequence of distributions $(\mathcal{L}(\hat{Y}^N))_N$ is $\mu$-chaotic.\\
    
   Let us denote $\mu^N \in \mathcal{P}(D(\R_+,\R_+))$ the empirical measure of $(\hat{Y}_i^N,z_i^N)_i$: 
    
    \begin{eqnarray}\label{Meas empirique proposition du chaos}
    \mu^N = \frac{1}{N} \sum_{i=1}^{N} \delta_{(\hat{Y}_i^N,z_i^N)}.
    \end{eqnarray}

   Let us begin with some lemmas:
    \begin{lemma}\label{Tight}
    The sequence of distributions $(\mathcal{L}(\hat{Y}_1^N,z^N))_N \subset D(\R_+,\R_+)^\N$ is tight.
    \end{lemma}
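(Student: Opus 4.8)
The plan is to deduce tightness from the martingale relative-compactness criteria of Ethier and Kurtz, applying Theorem \ref{3.9.1 EK} together with Theorem \ref{3.9.4 EK} to the single-particle process $(\hat{Y}_1^N, z_1^N)$ on the Polish space $\R_+ \times \lbrace 0,1 \rbrace$. On a Polish space relative compactness of laws is equivalent to tightness, so it suffices to verify the two ingredients of Theorem \ref{3.9.1 EK}: the compact containment condition, and the relative compactness of $(\mathcal{L}(f \circ (\hat{Y}_1^N, z_1^N)))_N$ for $f$ ranging over a dense subalgebra $C_a$ (for instance the bounded uniformly continuous functions with bounded support, which is dense in $\overline{C}(\R_+ \times \lbrace 0,1 \rbrace)$ for uniform convergence on compacts).

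The key preliminary step is a uniform-in-$N$ bound on the one-particle generator. For $f \in \overline{C}(\R_+ \times \lbrace 0,1 \rbrace)$, regarded as a function of the full state constant in all coordinates but the first, only the summands of $\overline{\mathcal{A}_g}$ with $i = 1$ or $j = 1$ yield a nonzero increment of $f$; there are $2(N-1)$ such summands and each is bounded by a fixed multiple of $\Vert f \Vert_\infty$. Because the slowed-time generator $\overline{\overline{\mathcal{A}}}$ carries the prefactor $\frac{1}{N}\frac{v}{m^2}$, the $O(N)$ number of potential opponents of particle $1$ is exactly compensated, and one obtains
\[
\left| \overline{\overline{\mathcal{A}}}f(y,z) \right| \leq \frac{v}{m^2}\, C\, \Vert f \Vert_\infty
\]
for an absolute constant $C$, uniformly in $N$ and in $(y,z)$. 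This cancellation is the heart of the matter.

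For compact containment I would reproduce, almost verbatim, the verification of (A1) in the proof of Theorem \ref{Thm Spatial model to Intermediate model}. The bound above shows that the total jump rate of $\hat{Y}_1^N$ is bounded independently of $N$, so the number $\mathcal{N}_T$ of jumps of particle $1$ on $[0,T]$ is stochastically dominated by a Poisson variable with $N$-independent parameter; hence there is $n_\varepsilon$ with $\PP(\mathcal{N}_T \leq n_\varepsilon) \geq 1 - \varepsilon/2$. Each jump moves the wealth by at most $M := \max(2\mathbf{P}, \mathbf{R}, \mathbf{S}, \mathbf{T})$, and since the one-particle initial law $\nu$ is a probability measure on $\R_+ \times \lbrace 0,1 \rbrace$ it is tight, so some $[\alpha,\beta]$ satisfies $\nu([\alpha,\beta] \times \lbrace 0,1 \rbrace) \geq 1 - \varepsilon/2$. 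Conditioning on the initial wealth exactly as in (A1) then gives
\[
\inf_N \PP\big( \forall t \leq T,\ (\hat{Y}_1^N(t), z_1^N) \in ([\alpha - n_\varepsilon M,\ \beta + n_\varepsilon M] \cap \R_+) \times \lbrace 0,1 \rbrace \big) \geq 1 - \varepsilon,
\]
the bracketed set being the desired compact.

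Finally, for the relative compactness of the projections, I would apply Theorem \ref{3.9.4 EK} with $Y_N(t) := f(\hat{Y}_1^N(t), z_1^N)$ and $Z_N(t) := \overline{\overline{\mathcal{A}}}f(\hat{Y}^N(t), z^N)$. The martingale characterization (Proposition 1.7 of \cite{EK}) applied to the full Markov process ensures that $Y_N(t) - \int_0^t Z_N(s)\,\mathrm{d}s$ is a martingale, so $(Y_N, Z_N)$ lies in the set $\hat{\mathcal{A}}_N$ of that theorem. Since $Y_N(t) = f((\hat{Y}_1^N, z_1^N)(t))$ exactly, the supremum condition holds with difference $0$; and since $|Z_N(t)| \leq \frac{v}{m^2} C \Vert f \Vert_\infty$ by the generator bound, the $L^p$ condition holds for every $p > 1$. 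Thus $C_a \subset \bar D$, Theorem \ref{3.9.4 EK} gives relative compactness of $(\mathcal{L}(f \circ (\hat{Y}_1^N, z_1^N)))_N$ for each $f \in C_a$, and Theorem \ref{3.9.1 EK} upgrades this to relative compactness, hence tightness, of $(\mathcal{L}(\hat{Y}_1^N, z_1^N))_N$. The only genuinely delicate point is the uniform generator bound, i.e.\ checking that the $1/N$ time-slowdown precisely offsets the linear-in-$N$ count of particle $1$'s potential opponents; the remaining steps are a transcription of the (A1) argument and two off-the-shelf applications of the Ethier–Kurtz criteria.
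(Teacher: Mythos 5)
Your proposal is correct and follows essentially the same route as the paper: both verify compact containment via the uniform (in $N$) bound on particle $1$'s jump rates and amplitudes, and both apply Theorem \ref{3.9.4 EK} followed by Theorem \ref{3.9.1 EK} using the martingale obtained from the full generator acting on functions constant in the coordinates $(y_j,z_j)$, $j\geq 2$ (which the paper writes as $L_{\mu^N_u}f$ plus the compensating term $h$). The cancellation you isolate as the key point --- the $1/N$ slowdown offsetting the $O(N)$ opponents of particle $1$ --- is exactly what underlies the paper's uniform $L^p$ bound on the integrand.
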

    \begin{proof}
    	To prove this lemma we will apply Theorems 3.9.1 and 3.9.4 of Ethier and Kurtz (see Theorem \ref{3.9.1 EK} and Theorem \ref{3.9.4 EK}), it remains to check the assumptions.
    	Since $\hat{Y}^N$ is a Markov process, when we take in (\ref{Gene random matching Temps ralenti}) functions constant in $(y_i,z_i^N)\ \  i\geq 2$ we get:
    	 
    	\begin{eqnarray}\label{Martingale pour Y_1^N}
    	   	f(\hat{Y}_1^N(t),z_1^N) - f(\hat{Y}_1^N(0),z_1^N) - \int_{0}^{t} L_{\mu^N_u}f(\hat{Y}_1^N(u-),z_1^N) \text{d}u + h(\hat{Y}^N_1,z_1^N).
    	\end{eqnarray}	
    is a $(\mathcal{F}_t)$-martingale (where $h$ is defined in \ref{Gene une particle dans random matching Intro}).\\
    	Since the jumping rates and amplitudes of $(\hat{Y}_1^N(t))_t$ are uniformly bounded (on $N$) we have the compact containment condition \emph{i.e.} $\forall \varepsilon >0$, $\forall\, T >0$ there exists a bounded subset $K \subset \N$ such that:
    	\[
    	\inf_N \PP(\forall t\leq T, (\hat{Y}_1^N(t),z^N) \in K) \geq 1 - \varepsilon
    	\]
    	Moreover, for all $p\in (0,+\infty)$ and all $T>0$ we have: $\sup_N \E\left( \int_0^T\left(L_{\mu^N_u}f(Y_1^N(u-),z_1^N)\right)^p\right)$ is equal to
    	\[
    	\sup_N \E\left( \int_0^T\left(L_{\mu^N_u}f(Y_1^N(u-),z_1^N)\right)^p\right) \leq \frac{v}{2 m^2}T\,2^p \Vert f \Vert_\infty^p < +\infty
    	\]
        Thus we apply Theorem 3.9.4, then Theorem 3.9.1 of Ethier Kurtz to get the tightness of $(\mathcal{L}(\hat{Y}_1^N),z^N)_N$.
    \end{proof}
    In order to make the proof easier to read let us give the following notations.
    		        		\textbf{Notations}
    		        		For $y \in \R_+$ and $z^N \in \lbrace 0,1 \rbrace^N$ we define for $(i,j) \in \lbrace 1,\dots, N \rbrace$:
    		        		\[
    		        		(y+g_{i,j},z^N) = \left\lbrace \begin{array}{ll}
    		        		(y + \mathbf{R},z^N)  & \text{if } z^N_i = 0, z^N_j = 0\\
    		        		(y + \mathbf{T},z^N)  & \text{if } z^N_i = 1, z^N_j = 0 \\
    		        		(y - \mathbf{S},z^N)  & \text{if } z^N_i = 0, z^N_j = 1 \\ 
    		        			\end{array}\right.
    		        		\]
    		        		
    		        		For the particular case $z^N_i = 1$ and $z^N_j = 1$ where the payoffs are random, the notation is: for $i,j \in \lbrace 1,\dots N \rbrace$ if $z^N_i = 1$ and $z^N_j = 1$
    		        		\[
    		        		[f(y + g_{i,j},z^N) -f(y,z^N)] = [\frac{1}{2}(f(y - 2 \mathbf{P},z^N) -f(y,z^N))]
    		        		\]
    		        		\newline
    The following lemma is one of the main keys to the propagation of chaos. It says that the correlations between two particles go to 0 when the number of particles goes to $+\infty$.
    \begin{lemma}\label{Controle Martingale}
    Let us denote $\mathcal{N}^{i,j}$ the  Poisson process attached to the couple of particles $(i,j)$. This Poisson process has intensity $\frac{v}{2m^2}\frac{1}{N}$ by (\ref{Gene random matching Temps ralenti}).\\
    Let us denote the compensated Poisson process $(\mathcal{M}^{i,j}(t))_t =(\mathcal{N}^{i,j}(t) - \frac{v}{2m^2}\frac{1}{N}t)_t$.\\
     For all $f \in \overline{C}(E_2)$, we denote by $M^{f,N}_i(t)$ the following:
     \begin{eqnarray}
   \sum_{\substack{j=1\\
     		j \neq i}}^{N}\int_{0}^{t} \mathds{1}_{\hat{Y}_i^N(u-)>0}\mathds{1}_{\hat{Y}_j^N(u-)>0}[f(\hat{Y}_i^N(u-)+g_{i,j}) - f(\hat{Y}_i^N(u-))] \text{d}\mathcal{M}^{i,j}(u).
     \end{eqnarray}
     Then $M_i^{f,N}$ is a square integrable martingale and for all $N>0$ and all $0< i \neq j \leq N$ there exists $C^f >0$ such that:
     \[
     \E(M_i^{f,N}(t)M_j^{f,N}(t)) \leq\frac{C^f}{N}.
     \] 
    \end{lemma}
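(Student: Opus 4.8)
The plan is to view each $M_i^{f,N}$ as a finite sum of stochastic integrals of bounded predictable integrands against the mutually orthogonal compensated Poisson processes $\mathcal{M}^{i,j}$, and to read off both assertions from the elementary calculus of such integrals. Write $\Phi_{i,j}(u) = \mathds{1}_{\hat{Y}_i^N(u)>0}\,\mathds{1}_{\hat{Y}_j^N(u)>0}\,[f(\hat{Y}_i^N(u)+g_{i,j}) - f(\hat{Y}_i^N(u))]$, so that $M_i^{f,N}(t) = \sum_{j\neq i}\int_0^t \Phi_{i,j}(u-)\,\mathrm{d}\mathcal{M}^{i,j}(u)$. The integrand $\Phi_{i,j}$ is adapted, left-continuous (we integrate against $u-$) hence predictable, and uniformly bounded by $2\Vert f\Vert_\infty$.

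For the first claim I would recall that each $\mathcal{M}^{i,j}$ is a square-integrable martingale, being a centered Poisson process of intensity $\tfrac{v}{2m^2}\tfrac{1}{N}$, so the integral of a bounded predictable process against it is again a square-integrable martingale, and the finite sum over $j\neq i$ preserves this. Concretely, since two distinct clocks almost surely never jump simultaneously, the optional quadratic variation splits diagonally, $[M_i^{f,N}]_t = \sum_{j\neq i}\int_0^t \Phi_{i,j}(u-)^2\,\mathrm{d}\mathcal{N}^{i,j}(u)$, whose expectation is bounded by $(N-1)\tfrac{v}{2m^2}\tfrac{1}{N}(2\Vert f\Vert_\infty)^2\,t < +\infty$; this yields square-integrability and, a fortiori, the martingale property.

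For the decorrelation bound, the key point is that $M_i^{f,N}$ is driven by the clocks $\{\mathcal{N}^{i,k}\}_{k\neq i}$ and $M_j^{f,N}$ by $\{\mathcal{N}^{j,l}\}_{l\neq j}$, and since $i\neq j$ these two families share exactly one clock, namely the one attached to the couple $\{i,j\}$ (so that $\mathcal{M}^{i,j}=\mathcal{M}^{j,i}$). Because $M_i^{f,N}M_j^{f,N} - [M_i^{f,N},M_j^{f,N}]$ is a martingale started at $0$, we have $\E(M_i^{f,N}(t)M_j^{f,N}(t)) = \E([M_i^{f,N},M_j^{f,N}]_t)$, and only the common clock contributes to the cross-variation, giving $[M_i^{f,N},M_j^{f,N}]_t = \int_0^t \Phi_{i,j}(u-)\Phi_{j,i}(u-)\,\mathrm{d}\mathcal{N}^{i,j}(u)$. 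Replacing $\mathcal{N}^{i,j}$ by its compensator and bounding both integrands by $2\Vert f\Vert_\infty$ yields $\E(M_i^{f,N}(t)M_j^{f,N}(t)) \leq \tfrac{v}{2m^2}\tfrac{1}{N}(2\Vert f\Vert_\infty)^2\,t$, i.e. the announced estimate with $C^f = \tfrac{2v\,t\,\Vert f\Vert_\infty^2}{m^2}$.

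The main obstacle is bookkeeping rather than analysis: one must argue carefully that a single firing of the clock $\mathcal{N}^{i,j}$ updates $\hat{Y}_i^N$ and $\hat{Y}_j^N$ simultaneously, so that this one clock — and no other — couples $M_i^{f,N}$ and $M_j^{f,N}$, while every other clock entering the two martingales is independent and therefore orthogonal and contributes nothing to the cross-variation. Once this shared-clock structure is correctly isolated, the factor $1/N$ is precisely the intensity of that single common Poisson process, which is exactly where the propagation of chaos originates.
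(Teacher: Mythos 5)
Your proposal is correct and follows essentially the same route as the paper: reduce $\E(M_i^{f,N}(t)M_j^{f,N}(t))$ to the expected (co)variation bracket, observe that distinct compensated Poisson clocks are independent hence orthogonal so that only the single shared clock $\mathcal{M}^{i,j}=\mathcal{M}^{j,i}$ contributes, and bound the surviving term by its intensity $\frac{v}{2m^2}\frac{1}{N}$ times $(2\Vert f\Vert_\infty)^2 t$, which is exactly the paper's constant. The only cosmetic difference is that you isolate the common clock directly while the paper writes out the full double sum of cross-brackets before discarding the vanishing terms.
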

    The intuition behind the following proof is to say that when the number of particles goes to $+\infty$ two fixed particles are less and less likely to interact with each other.
    \begin{proof}
    	In order for the proof to be easier to read, we will denote: \[
    	\forall u \geq 0 \qquad \Delta_{ij}(u) = [f(Y_i^N(u-)+g_{i,j},z^N) - f(Y_i^N(u-),z^N)]
    	\]
    Since $\mathcal{M}^{i,j}$ is a martingale, the stochastic integral $M^{f,N}_i$ is a martingale.
Moreover, since $f$ is bounded, $M_i^{f,N}$ is square integrable.\\
By exchangeability, we have to show that 
    for all $N\in \N^*$, there exists $C^f >0$ such that:
    \[
    \E(M_1^{f,N}(t)M_2^{f,N}(t)) \leq\frac{C^f}{N}.
    \] 
    By the product rule (Section 6.4 of Kurtz \cite{kurtz2001lectures}) we have (with $<\cdot,\cdot>$ is the quadratic covariation): $\E(M_1^{f,N}(t)\, M_2^{f,N}(t))$ is equal to
    \[
    \underbrace{\E(M_1^{f,N}(0)\, M_2^{f,N}(0))}_{=0} + \underbrace{\E(\int_0^t M_1^{f,N}(u-) \text{d}M_2^{f,N}}_{=0} + 0 + \E (<M_1^{f,N},M_2^{f,N}>_t).
    \]
    By bilinearity of the quadratic covariation and using Lemma 6.6 of Kurtz \cite{kurtz2001lectures}, we get that $\E(M_1^{f,N}(t)\, M_2^{f,N}(t)) $ is equal to 
    \[
    \sum_{j = 2}^{N} \sum_{\substack{i = 1\\
    		i \neq 2}}^{N} \E\left(\int_{0}^{t}  \Delta_{1i}(u) \Delta_{2j}(u) \mathds{1}_{Y_1(u)>0}\mathds{1}_{Y_j(u)>0} \mathds{1}_{Y_2(u)>0}\mathds{1}_{Y_i(u)>0}\  \text{d}<\mathcal{M}^{1,j}, \mathcal{M}^{2,i}>_u \right).
    \]
    Since $\mathcal{M}^{1,2} = \mathcal{M}^{2,1}$ and separating the term of the previous double sum in two big parts: the interaction between 1 and 2, and the other interactions we get: $\E(M_1^{f,N}(t)\, M_2^{f,N}(t))$ is equal to
    \[
    \begin{array}{cl}
   & \E\left(\int_{0}^{t} \Delta_{1j}(u) \Delta_{2j}(u) \mathds{1}_{Y_1(u)>0} \mathds{1}_{Y_2(u) > 0} \text{d}<\mathcal{M}^{1,2},\mathcal{M}^{1,2}>_u \right) \medskip\\
  & + \sum_{i,j = 3}^{N} \E \left(\int_{0}^{t} \Delta_{1j}(u) \Delta_{2i}(u) \mathds{1}_{Y_1(u)>0} \mathds{1}_{Y_2(u) > 0}\mathds{1}_{Y_i(u)>0} \mathds{1}_{Y_j(u) > 0} \text{d}<\mathcal{M}^{1,j}, \mathcal{M}^{i,2}>_u \right)\\
  & + \sum_{i=3}^{N} \E \left(\int_{0}^{t} \Delta_{1j}(u) \Delta_{2i}(u) \mathds{1}_{Y_1(u)>0} \mathds{1}_{Y_2(u)>0} \mathds{1}_{Y_i(u) > 0} \text{d}<\mathcal{M}^{1,2}, \mathcal{M}^{2,i}>_u \right)\\
  & + \sum_{j=3}^{N} \E \left(\int_{0}^{t} \Delta_{1j}(u) \Delta_{2i}(u) \mathds{1}_{Y_1(u)>0} \mathds{1}_{Y_2(u)>0} \mathds{1}_{Y_j(u) > 0} \text{d}<\mathcal{M}^{1,j}, \mathcal{M}^{
 	1,2}>_u \right).
    \end{array}  
    \]
    But for all $(i,j)\neq(i',j')$ $\mathcal{M}^{i,j}$ and $\mathcal{M}^{i',j'}$ are two independent martingales,
    so $\left(\mathcal{M}^{i,j}(t)\, \mathcal{M}^{i',j'}(t)\right)_t$ is a martingale for all $(i,j) \neq (i',j')$. Hence we get: $<\mathcal{M}^{i,j}, \mathcal{M}^{i',j'}>_u = 0$ for all $u >0$ and $(i,j) \neq (i',j')$.\\
    
    Finally we get (because $<\mathcal{M}^{1,2}>_t =\frac{v}{2m^2} \frac{t}{N}$): 
    \[
    \E(M_i^{f,N}(t) M_j^{f,N}(t)) \leq 4\,t\, \Vert f \Vert^2_\infty \frac{v}{2m^2}\frac{1}{N}.
    \]
    \end{proof}
    \subsection{Proof of Theorem \ref{Thm proposition chaos Modèle inter to MF}}
    Now we will prove the propagation of chaos.\\
    By Proposition 2.2 of Sznitman \cite{sznitman1991topics} (see Proposition  \ref{2.2 Sznitman}), we have to show that: there exists $\mu \in \mathcal{P}(D(\R_+,\R_+ \times \lbrace 0,1 \rbrace))$ such that: with $\mu^N$ defined in (\ref{Meas empirique proposition du chaos}) \\
    \[\mathcal{L}(\mu^N) \underset{N \to +\infty}{\longrightarrow} \delta_\mu \qquad \text{ weakly in } \mathcal{P}(\mathcal{P}(D(\R_+,\R_+ \times \lbrace 0,1 \rbrace))) .\]

    \textbf{Step 1: Relative compactness of the empirical measure}\\
    
    By Lemma \ref{Tight}, $(\mathcal{L}(Y^N_1))_N \in \mathcal{P}(D(\R_+,\R_+ \times \lbrace 0,1 \rbrace))$ is tight. By Proposition 2.2 of Sznitman \cite{sznitman1991topics} (see Proposition  \ref{2.2 Sznitman}) we have that:    $(\mathcal{L}(\mu^N))_N \subset \mathcal{P}(\mathcal{P}(D(\R_+,\R_+\times \lbrace 0,1 \rbrace)))$ is tight. So by Prohorov's theorem $(\mathcal{L}(\mu^N))_N$ has a converging subsequence.\\
    
    \textbf{Step 2: Convergence to a solution of a martingale problem}\\

    Let $\Pi^\infty$ be a limit point of $(\mathcal{L}(\mu^N))_N$ and let $\mu$ be a random variable in $\mathcal{P}(D(\R_+,\R_+))$ with distribution $\Pi^\infty$. 
   \\
    Before introducing the non-linear martingale problem, let us introduce some notations.

    \textbf{Notations}
    	Let $F$ be a Polish space, $\mu$ a probability measure and $\phi$  a bounded measurable function  from $F$ to $\R$. We set:
    	\[
    	\langle \mu , \phi \rangle = \int_F \phi(x) \mu(\text{d}x)
    	\]
    \newline

    We will prove the following lemma: 
     \begin{lemma}\label{Non linear Martingal problem MF}
     	$\mu$ satisfies a non linear martingale problem with initial distribution $\nu$. More precisely, for all $f$ continuous bounded, for all $T>t>0$ (with $(X,Z) = (X(t),Z(t))_t$ is the canonical process in $D(\R_+,\R_+\times \lbrace 0,1 \rbrace)$ and $L$ defined in $(\ref{Non linear generator MF})$)
     	\[
     	M^f(t) = f(X(t),Z(t)) -f(X(0),Z(0)) - \int_0^t L_{\mu_u}f (X(u),Z(u)) \text{d}u
     	\]
      	is a $\mu$-martingale and $\mu(0) = \nu \ \Pi^\infty$ a.s.
     \end{lemma}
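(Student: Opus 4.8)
The statement is the ``identification of the limit'' step of the propagation of chaos argument, so the plan is to recast the martingale property as the vanishing of a scalar functional of the random limit measure $\mu$, and then to transport the per-particle martingale property (\ref{Martingale pour Y_1^N}) through the weak convergence $\mathcal{L}(\mu^N)\to\Pi^\infty$ extracted in Step~1. Concretely, fix $f\in\overline{C}(\R_+\times\lbrace0,1\rbrace)$, times $0\le s_1\le\cdots\le s_p\le s<t$, and $g_1,\dots,g_p\in\overline{C}(\R_+\times\lbrace0,1\rbrace)$, and for $m\in\mathcal{P}(D(\R_+,\R_+\times\lbrace0,1\rbrace))$ set
\[
F(m) = \left\langle m,\ \Big( f(X(t),Z(t)) - f(X(s),Z(s)) - \int_s^t L_{m_u} f(X(u),Z(u))\,\mathrm{d}u \Big)\prod_{l=1}^p g_l(X(s_l),Z(s_l)) \right\rangle,
\]
where $(X,Z)$ is the canonical process and $m_u$ its time-$u$ marginal. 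Being a $\mu$-martingale for $M^f$ is equivalent to $F(\mu)=0$ for every such choice of data, and since it suffices to test against a countable family (rational times, a countable dense set of functions), it is enough to prove, for each fixed choice, that $F(\mu)=0$ holds $\Pi^\infty$-almost surely.

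This I would obtain from two facts: (i) $F$ is bounded and, at $\Pi^\infty$-almost every point, continuous on $\mathcal{P}(D(\R_+,\R_+\times\lbrace0,1\rbrace))$; and (ii) $\E[\,|F(\mu^N)|\,]\to 0$ along the subsequence. Granting these, the continuous mapping theorem together with boundedness gives $\E_{\Pi^\infty}[\,|F(\mu)|\,]=\lim_N\E[\,|F(\mu^N)|\,]=0$, hence $F(\mu)=0$ $\Pi^\infty$-a.s. For (ii), I would evaluate
\[
F(\mu^N) = \frac{1}{N}\sum_{i=1}^N \Xi_i, \qquad \Xi_i := \Big( f(\hat{Y}_i^N(t),z_i^N) - f(\hat{Y}_i^N(s),z_i^N) - \int_s^t L_{\mu^N_u} f(\hat{Y}_i^N(u),z_i^N)\,\mathrm{d}u \Big)\prod_{l=1}^p g_l(\hat{Y}_i^N(s_l),z_i^N),
\]
and use (\ref{Martingale pour Y_1^N}): the bracket defining $\Xi_i$ is the increment $M^{f,N}_i(t)-M^{f,N}_i(s)$ of the martingale of Lemma~\ref{Controle Martingale}, up to a self-interaction correction bounded by $C/N$ (the term $h$ together with the $1/N$ time-slowing). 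Each $\Xi_i$ is then uniformly bounded and the $g$-factors are $\mathcal{F}_s$-measurable.

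The core estimate is a variance computation:
\[
\E[F(\mu^N)^2] = \frac{1}{N^2}\sum_{i}\E[\Xi_i^2] + \frac{1}{N^2}\sum_{i\neq j}\E[\Xi_i\Xi_j].
\]
The diagonal sum is $O(1/N)$ by boundedness. For each off-diagonal term, conditioning on $\mathcal{F}_s$ turns the product of the two martingale increments into the cross quadratic covariation of $M^{f,N}_i$ and $M^{f,N}_j$, which is carried only by the shared Poisson clock $\mathcal{M}^{i,j}$ and is $O(1/N)$; this is precisely the correlation decay quantified in Lemma~\ref{Controle Martingale}, the cross terms involving the $O(1/N)$ corrections contributing at the same order. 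Hence $\E[F(\mu^N)^2]=O(1/N)$ and $\E[\,|F(\mu^N)|\,]\le\E[F(\mu^N)^2]^{1/2}\to 0$, giving (ii). The initial condition $\mu_0=\nu$ follows separately from the law of large numbers applied to $\mu^N_0=\frac1N\sum_i\delta_{(\hat{Y}_i^N(0),z_i^N)}$ under the i.i.d.\ start $\nu^{\otimes N}$.

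The main obstacle is step (i), the continuity of $F$ in the weak topology. The map $m\mapsto L_{m_u}f$ depends on $m$ only through $m_u((0,\infty)\times\lbrace0\rbrace)$ and $m_u((0,\infty)\times\lbrace1\rbrace)$ and through the indicator $\mathds{1}_{y>0}$, all discontinuous across the death boundary $\lbrace y=0\rbrace$; since dead particles are parked at $0$, the limit $\mu_u$ may genuinely charge $\lbrace0\rbrace\times\lbrace z\rbrace$, so the portmanteau theorem yields only semicontinuity pointwise in $u$. I would resolve this by exploiting that $F$ sees $m$ only through the time integral: working $\Pi^\infty$-almost surely, I would show that the set of times $u\in[s,t]$ with $\mu_u(\lbrace0\rbrace\times\lbrace z\rbrace)>0$ has zero Lebesgue measure, so that $\lbrace0\rbrace\times\lbrace z\rbrace$ is a continuity set of $\mu_u$ for almost every $u$, and that the fixed test times $s_1,\dots,s_p,s,t$ are a.s.\ continuity times of the limit process. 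Dominated convergence applied inside the integral then passes $L_{\mu^N_u}f\to L_{\mu_u}f$ through, delivering the required continuity. This negligibility of the ``bad'' boundary times is the delicate point on which the clean limit identification rests.
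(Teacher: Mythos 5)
Your proposal follows essentially the same route as the paper: the same functional (your $F$ is the paper's $\mathcal{G}$), the same reduction of the martingale property to ``$F(\mu)=0$ $\Pi^\infty$-a.s.'' via almost-sure continuity of the functional plus $\E[|F(\mu^N)|]\to 0$, and the same use of exchangeability together with the correlation-decay Lemma \ref{Controle Martingale} to make the off-diagonal martingale cross-terms $O(1/N)$ --- this is exactly the paper's term $A$. The only structural difference is bookkeeping: you put $L_{m_u}$ inside the functional and push the discrepancy between $L_{\mu^N_u}$ and $L_{\mu_u}$ into the continuity of $F$, whereas the paper defines $\mathcal{G}$ with $L_{\mu_u}$ fixed, so that this discrepancy appears as an explicit term $B$ bounded by $\E\int_t^T\bigl|\mu^N_u((0,+\infty))-\mu_u((0,+\infty))\bigr|\,\mathrm{d}u$. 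Your single second-moment bound $\E[F(\mu^N)^2]=O(1/N)$ is a clean packaging of the paper's $A^2$ estimate.

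One concrete concern: your proposed fix for the continuity at the death boundary --- that $\Pi^\infty$-a.s.\ the set of times $u$ with $\mu_u(\{0\}\times\{z\})>0$ has zero Lebesgue measure --- is not tenable as stated. Dead particles are absorbed at wealth $0$ and remain there, so $\mu_u(\{0\}\times\{z\})$ is generically positive for every $u>0$ and non-decreasing in $u$; the ``bad'' set of times is typically all of $[s,t]$, and portmanteau alone does not give $\mu^N_u((0,+\infty)\times\{z\})\to\mu_u((0,+\infty)\times\{z\})$. You have correctly isolated the delicate point (the paper itself only invokes the Graham--M\'el\'eard argument, which handles the countable set of fixed jump times, and then asserts the convergence in $B$), but the negligibility claim you lean on must be replaced --- for instance by showing that the empirical measures cannot asymptotically charge a punctured neighbourhood $(0,\varepsilon)$ of the absorbing state (using that wealths move on a fixed lattice of payoff increments determined by $\nu$), or by a separate estimate controlling the mass near the boundary uniformly in $N$. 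Apart from this sub-step, the argument is correct and matches the paper's.
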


     \begin{proof}\textcolor{white}{text}\\
     	We take $k \in \N^*$ and $0 \leq t_1 < t_2<\dots <t_k \leq t < T$, $g$ a continuous bounded function from $\R_+^k$ to $\R$ and $f$ a continuous bounded function from $\R_+$ to $\R$,  let $\mathcal{G}$ be  a function from $\mathcal{P}(D(\R_+,\R_+ \times \lbrace 0,1 \rbrace))$ to $\R$ defined by:
\[
\mathcal{G}(R) =  \left\langle R, \left( M^f(T) -M^f(t)\right)g((X(t_1),Z(t_1)),\dots,(X(t_k),Z(t_k))) \right\rangle.
\]
Using the same argument as in the proof of Theorem 4.5 of Graham and M\'el\'eard\cite{graham1997stochastic}, we have that for all $0 \leq t_1 < t_2<\dots <t_k \leq t < T$ outside a countable space denoted $D$, $\mathcal{G}$ is $\Pi^\infty$ a.s. continuous.\\

We will show that:\[\mathcal{G}(\mu) = 0 \quad \Pi^\infty \text{ a.s.}\] Indeed, if it is true for all $0 \leq t_1 < t_2<\dots <t_k \leq t < T$ outside of $D$ and for any $g$ continuous bounded from $\N^k$ to $\R$ then by the Monotone Class Theorem, $\forall A \subset \mathcal{T}_t: \ \langle \mu, M^f(T) \mathds{1}_A  \rangle = \langle \mu , M^f(t)\mathds{1}_A \rangle$. So $\mu$ satisfies the above non linear martingale problem.\\
It remains to show that $\mathcal{G}(\mu) = 0$ $\Pi^\infty$ a.s. \\
We have that $ \mathcal{G} \left(\frac{1}{N}\sum_{i=1}^{N} \delta_{(Y_i^N,z_i^N)}\right)$ is equal to
\[
 \frac{1}{N}\sum_{i=1}^{N}
\left(f(Y_i^N(T),z_i^N) - f(Y_i^N(t),z_i^N) - \int_{t}^{T} L_{\mu_u}f(Y_i^N,z_i^N) \text{d}u\right)g_i^N.
\]
where $g_i^N = g((Y_i^N(t_1),z_i^N),\dots,(Y_i^N(t_k),z_i^N))$.\\
We will take the notations of Lemma \ref{Controle Martingale}. We have: (since $f$ is bounded)
\[
\begin{array}{rcl}
f(Y_i^N(T)) - f(Y_i^N(t)) & = & \sum\limits_{\substack{j=1\\j\neq i}}^{N}\int\limits_{t}^{T} \mathds{1}_{Y_i^N(u)>0}\mathds{1}_{Y_j^N(u)>0} \Delta_{ij}(u)\text{d}\mathcal{N}^{i,j}(u) \medskip\\
& \leq & M_i^{f,N}(T) - M_i^{f,N}(t)\\
& &  + \int_{t}^{T} \left(L_{\mu^N_u} f(Y_i^N(u),z_i^N) +O\left(\frac{1}{N}\right) \right) \text{d}u.
\end{array}
\]

So $ \mathcal{G} \left(\frac{1}{N}\sum_{i=1}^{N} \delta_{Y_i^N}\right)$ becomes less or equal than:
\[
\begin{array}{l}
\frac{1}{N} \sum_{i=1}^{N} \left(M_i^{f,N}(T) - M_i^{f,N}(t)\right)g_i^N \\
 + \frac{1}{N}\sum_{i=1}^{N}\left(\int_{t}^{T} L_{\mu_u^N} f(Y_i^N,z_i^N) - L_{\mu_u} f(Y_i^N,z_i^N) +O\left(\frac{1}{N}\right)\text{d}u\right)g_i^N.
\end{array}
\]
Hence $\E(| \mathcal{G} \left(\frac{1}{N}\sum_{i=1}^{N} \delta_{Y_i^N}\right)|)$ is less than or equal to: 
\[
\begin{array}{l}
\overbrace{\E\left(\left|\frac{1}{N} \sum_{i=1}^{N} \left(M_i^{f,N}(T) - M_i^{f,N}(t)\right)g_i^N\right|\right)}^{A} \\
+ \underbrace{\E\left(\left|\frac{1}{N}\sum_{i=1}^{N}\left(\int_{t}^{T}L_{\mu_u^N} f(Y_i^N,z_i^N) - L_{\mu_u} f(Y_i^N,z_i^N) +O\left(\frac{1}{N}\right)\text{d}u\right)g_i^N\right|\right)}_{B}.
\end{array}
\]
$A$ is the term corresponding to the variation of the process. $B$ corresponds to the convergence of $\mu^N$ to $\mu$.
First, let us show that $A$ goes to 0 when $N$ goes to $+\infty$.
\[
\begin{array}{rcl}
A^2 & \leq & \frac{1}{N^2} \E\left(\left(\sum\limits_{i=1}^{N}(M_i^{f,N}(T) - M_i^{f,N}(t))g_i^N\right)^2\right)\medskip\\
 & \leq & \frac{1}{N^2}\sum\limits_{(i,j) \in \lbrace1,\dots,N \rbrace^2} \E\left( (M_i^{f,N}(T) - M_i^{f,N}(t))(M_j^{f,N}(T) - M_j^{f,N}(t))g_i^N g_j^N \right).
\end{array}
\]
By the exchangeability of $(Y^N_i)_i$ we get: 
\[
 \begin{array}{rcl}
 A^2 &\leq & 
\frac{1}{N} \Vert g_1 \Vert_\infty^2 \E \left((M_1^{f,N}(T) - M_1^{f,N}(t))^2 \right)\smallskip\\
& & + \frac{N(N-1)}{N^2} \E(g_1^N g_2^N (M_1^{f,N}(T) - M_1^{f,N}(t))(M_2^{f,N}(T) - M_2^{f,N}(t))).
\end{array}
\]
The second term of the sum is less than or equal to (because $g$ is bounded): 
\[
\Vert g_1 \Vert_\infty \, \Vert g_2 \Vert_\infty \left(\begin{array}{c}
 \E(M^{f,N}_1(T) M^{f,N}_2(T)) - \E(M^{f,N}_1(T)M^{f,N}_2(t))\\ - \E(M^{f,N}_1(t)M^{f,N}_2(T)) + \E(M^{f,N}_1(t)M^{f,N}_2(t))
\end{array} \right).
\]
By conditioning by $\mathcal{F}_t$ and applying Lemma \ref{Controle Martingale}  we get that: 
\[
A^2 \underset{N \to + \infty}{\longrightarrow} 0
\]

Now let us show that $B$ goes to 0 when $N \to +\infty$.\\
Since $\Pi^N \to \Pi^{\infty}$, we have $\mu^N$ converges in distribution when $N \to +\infty$ to $\mu$ so we have upper-bounding $g$ and $L$:

\[
B \leq \frac{v}{2m^2} \frac{7}{2}\Vert f \Vert_\infty \, \Vert g \Vert_\infty \E(\int_{t}^{T} (\mu_u^N((0,+\infty)) - \mu_u((0,+\infty)) + O\left(\frac{1}{N}\right) \underset{N \to +\infty}{\rightarrow} 0.
\]
To end the proof of the lemma, we use the Fatou's lemma to get:
\[
\E(|\mathcal{G}(\mu)|) \leq \lim\limits_{N \to +\infty} \E(|\mathcal{G}(\mu^N)|) = 0
\]
Hence we have $\mathcal{G}(\mu) = 0$.\\
Since $(X,Z) \in D(\R_+,\R_+ \times \lbrace 0,1 \rbrace) \mapsto (X(0),Z(0))$ is continuous, we have that $\mu(0) = \delta_\nu$, $\Pi^\infty$ a.s. which concludes the proof.
     \end{proof}

     \textbf{Step 3: Uniqueness of the solution of the martingale problem}
     
     We will use Proposition 2.3 of Graham \cite{graham2000chaoticity} (see Proposition \ref{2.3 Graham}) and the notations within. \\
     In our problem we have for $q$ a probability measure: \[
     J(q,(x,z))  =  \frac{v}{2m^2} \frac{1}{2} \mathds{1}_{x>0}\left[\begin{array}{l}
     \ \ \mathds{1}_{z=0}\, q((0,+\infty)\times \lbrace 0 \rbrace) \delta_{(x + \mathbf{R},z)}\smallskip\\
     +\mathds{1}_{z=0}\, q((0,+\infty)\times \lbrace 1 \rbrace) \delta_{(x - \mathbf{S},z)} \smallskip\\
     +\mathds{1}_{z=1}\, q((0,+\infty)\times \lbrace 0 \rbrace) \delta_{(x + \mathbf{T},z)} \smallskip\\
     +\mathds{1}_{z=1}\, q((0,+\infty)\times \lbrace 1 \rbrace) \frac{1}{2}\delta_{(x - 2\mathbf{P},z)}
     \end{array}\right].
     \]
     Hence $\Vert J(q,x) \Vert \leq \frac{7}{4} \frac{v}{2m^2}$, and for $q,q' \in \mathcal{P}(\N)$ we have: 
     \[
         \Vert J(q,x) -J(q',x) \Vert \leq \frac{7}{4} \frac{v}{2m^2} \Vert q - q' \Vert      
     \]
     
     Thus by Proposition 2.3 of Graham \cite{graham2000chaoticity} (see Proposition \ref{2.3 Graham}), there is a unique solution to the non-linear martingale problem defined in Lemma \ref{Non linear Martingal problem MF}.\\
     Hence, $\delta_\mu$ is the unique limit point of $\Pi^N$, and by Lemma \ref{2.2 Sznitman}, we have that $(Y^N)_N$ is $\mu$-chaotic.
     \qed

\subsection{Proof of Corollary \ref{Corol Evolution Equation} }
Since, $\forall t >0$ $M^f(t)$ is a $\mu$-martingale, taking for all $y \in \R_+^*$ $f = \mathds{1}_{(y,0)}$ (resp $f = \mathds{1}_{(y,1)}$) the indicator function in $(y,0) \in \R_+ \times \lbrace 0,1 \rbrace$ (resp in $(y,1) \in \R_+ \times \lbrace 0,1 \rbrace$) we get: for all $t>0$
\[
0 = \int_{D(\R_+,\N)} \left(\mathds{1}_{\substack{X(t)=y \\ Z(t) = 0}} - \mathds{1}_{\substack{X(0)=y \\ Z(t) = 0}} - \int_0^t L_{\mu_u} \mathds{1}_{(y,0)} (X(t),Z(t)) \text{d}u\right)\mu(\text{d}X)
\]
So with $(Y(t),z)_t$ a process of law $\mu$ we have: for all $y\in \R_+$

\begin{eqnarray}\label{Preuve Coroll Eqn 1}
\PP(Y(t)=y,z=0) - \PP(Y(0)=y,z=0) = \int_0^t \E (L_{\mu_u} f(Y(u),0)) \text{d}u
\end{eqnarray}

Let us denote $(Y_n)_n$ the Markov chain induced by $(Y(t))_t$ and $(N_t)_t$ the Poisson process counting the number of jumps the process do. Since we have:
\[
\begin{array}{rcl}
\PP(Y(t) >u,z=0) &=& \sum_{n=0}^{+\infty} \PP(N_t = n) \PP(Y_n > u,z=0)\medskip\\ & = & \sum_{n=0}^{+\infty} e^{-vt/m^2}\frac{1}{n!} \left(\frac{vt}{m^2}\right)^n  \PP(Y_n >u,z=0).
\end{array}
\] Hence we have that for all $u\in \R$, $t \mapsto \PP(Y(t)>u)$ is continuous, deriving equation (\ref{Preuve Coroll Eqn 1}) and doing the same reasoning for $(y,1)$ we get the evolution equations (\ref{Evol B}),(\ref{Evol R}).
\qed
\section{Extension to other models}
The two previous theorems (Theorem \ref{Thm Spatial model to Intermediate model} and Theorem \ref{Thm proposition chaos Modèle inter to MF}) can be applied not only to the Demographic prisoner's dilemma but also on a more general class of models. Let us introduce those models and then state the theorems in those general context. 
\subsection{Model}
As in Section \ref{Model}, we keep the spatial structure but we consider more general games and so more strategies (but which are always fixed).\\

\textbf{Model}
\begin{enumerate}
	\setlength\itemsep{0.2em}
\item Let $\mathfrak{G} = (\mathfrak{V}, \mathfrak{E})$ be a fixed simple graph connected.
\item $N$ particles are randomly placed on $\mathfrak{V}$ ($N \in \N^*$). For a particle $i \in \lbrace 1,\dots, N \rbrace$, let us denote $X_i(t)$ its position at time $t \in \R_+$. 
\item The $N$ particles move according to continuous time independent symmetric simple random walks on the graph $\mathfrak{G}$. Thus we associate to each particle $i$ a Poisson process with parameter $d\in \N^*$. When the Poisson process realizes, $X_i(t)$ jumps to one of its nearest neighbors (given by $\mathfrak{E}$) with equal probability. Then two (or more) particles can be on the same site at the same time.
We call $E_1:= \left(\mathfrak{V}\right)^N$ the discrete compact state space of the Markov process $(X(t))_t:= (X_1(t),\dots,X_N(t))_t$.

\item Each particle $i \in \lbrace 1,\dots N \rbrace$ has a wealth at time $t \geq 0$ denoted $Y_i(t)\in \R$.
\item Let $a\in \N^*$, given a matrix $G=(G(b,b'))_{b,b'} \in M_a(\R)$, for all $i \in \lbrace 1,\dots,N\rbrace$, $Y_i$ evolves following the symmetric game of payoff matrix $(G,G^T)$ (with actions $A_1,\dots, A_a$).\\
We make the wealth evolve the same way we do in Section \ref{Model}, that is only particles on the same site can play together and the wealth evolves accumulating the payoffs of the games.
\item The strategies of the individuals are fixed.  
\begin{itemize}
	\item Let $L \in \N^*$ be the number of fixed strategies (possibly mixed).
	\item We define for all $\ell \in \lbrace 1,\dots L \rbrace$ the strategy $\theta_\ell$ such that a player of strategy $\theta_\ell$ plays action $A_b$ ($b \in \lbrace 1,\dots a  ,\rbrace$) with probability $\alpha_b^\ell \in [0,1]$ (with $\sum_{b=1}^{a} \alpha_b^\ell = 1$).
	\item Each individual $i$ has a fixed strategy coded in a parameter $z_i \in \lbrace \theta_1,\dots, \theta_L \rbrace$. 
\end{itemize} 
We call $E_2:= (\R \times \lbrace \theta_1, \dots, \theta_L \rbrace)^N$ the state space of the stochastic process $ (Y(t),z)_t:= ((Y_1(t),z_1),\dots,(Y_N(t),z_N))_t$. 
\item Each strategy $\theta_\ell$ has a death domain $D_\ell$ such that: if the wealth $Y_i$ of an individual $i$ of strategy $\theta_\ell$ reaches $D_\ell$ then the particle dies and cannot play with other particles. For example in Section 2, $D_0=D_1=]-\infty,0]$.
\item We also make the following assumption: particles are indistinguishable, \emph{i.e.} the distribution of $(X,{Y},z)$ is exchangeable.
\end{enumerate}
The Markov infinitesimal generator of this model is: for all $f \in \overline{C}(E)$

\begin{eqnarray}\label{Gene extension}
\mathcal{A}f(x,y,z) = \mathcal{A}_d f(x,y,z) + \mathcal{A}_g f (x,y,z)
\end{eqnarray}

where \begin{eqnarray} \label{Gene dep extension}
\mathcal{A}_d f(x,y,z) = \sum\limits_{i=1}^{N} \sum\limits_{c=1}^2 \sum\limits_{\epsilon = \pm 1} \frac{d}{2 \times 2} [f(x + \epsilon e_i^c,y,z) - f(x,y,z)].
\end{eqnarray}

and for all $f \in \overline{C}(E)$: 
\begin{eqnarray}\label{Gene Jeu extension}
\mathcal{A}_g f(x,y,z)= \sum\limits_{\substack{(i,j)\in \lbrace 1,\dots,N \rbrace^2 \\ i \neq j}} \frac{1}{2}\mathds{1}_{x_i = x_j} g^f_{i,j}(x,y,z)
\end{eqnarray}  where $g^f_{i,j} (x,y,z)$ is equal to
\[
\sum_{(k,l) \in \lbrace 1,\dots, L \rbrace^2} \mathds{1}_{\substack{z_i = k\\Y_i \notin D_k}} \mathds{1}_{\substack{z_j = \ell\\Y_j \notin D_\ell}}. \sum_{\substack{b=1\\b'=1}}^{a} \alpha_b^k \alpha_{b'}^\ell [f(x,y + G(b,b')e_i + G(b',b)e_j,z) - f(x,y,z)].
\]
For all $f$ continuous bounded functions from $E_2$ to $\R$ and $(y,z) \in E_2$: we denote by $g^f_{i,j} (y,z)$ the following quantity:

\[
\sum_{(k,l) \in \lbrace 1,\dots, L \rbrace^2} \mathds{1}_{\substack{z_i = k\\Y_i \notin D_k}} \mathds{1}_{\substack{z_j = \ell\\Y_j \notin D_\ell}}. \sum_{\substack{b=1\\b'=1}}^{a} \alpha_b^k \alpha_{b'}^\ell [f(y + G(b,b')e_i + G(b',b)e_j,z) - f(y,z)].
\]

\subsection{Theorems}
Let us begin with the homogenization theorem. 
Let us denote $\pi$ the unique invariant distribution of a random walk on $\mathfrak{G}$. Let us denote $\mathfrak{m} = \sum\limits_{x \in \mathfrak{V}} \pi^2(x)$ the probability that particle $i$ and $j$ are on the same site ($i,j \in \lbrace 1,\dots N \rbrace$). 
\begin{theorem}
	Let $(X^d(t),Y^d(t),z)_t$ be a stochastic process with initial distribution $\nu \in \mathcal{P}(E_1 \times E_2)$ with second marginal denoted $\nu_y$ and generator $\mathcal{A}$ defined in $(\ref{Gene extension}),\ (\ref{Gene dep extension})$, $(\ref{Gene Jeu extension})$\\
				
		Then $(Y^d,z)_d$ converges in distribution (as a family of stochastic processes) to the Markov process $(\overline{Y},z)$ with initial distribution $\nu_y$, infinitesimal generator $\overline{\mathcal{A}}$ and domain $\overline{C}(E_2)$: for all $f\in \overline{C}(E_2)$
		\[
		\overline{\mathcal{A}}f(y,z) = \frac{v}{2} \mathfrak{m} \sum\limits_{\substack{(i,j) \in \lbrace 1,\dots N \rbrace^2\\
				i \neq j}}  g^f_{i,j} (y,z).
		\]
		
		\end{theorem}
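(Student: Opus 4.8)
The plan is to follow verbatim the scheme used for Theorem~\ref{Thm Spatial model to Intermediate model}, since the only structural changes — a general connected graph $\mathfrak{G}$ in place of the torus, an arbitrary symmetric payoff matrix $(G,G^T)$, $L$ fixed (possibly mixed) strategies, and general death domains $D_\ell$ — do not affect the averaging mechanism: the fast variable is still the spatial configuration $X^d$, and the slow variable $(Y^d,z)$ feels space only through the collision indicators $\mathds{1}_{x_i=x_j}$. First I would establish the analogue of Lemma~\ref{Temps d'occup1} for the $N$-particle walk on $E_1=\mathfrak{V}^N$. Because $\mathfrak{G}$ is finite and connected each walk is irreducible, and in continuous time the product of the $N$ independent irreducible walks is itself irreducible on $\mathfrak{V}^N$, hence ergodic with unique invariant law $\pi^{\otimes N}$. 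The time-rescaling identity $(X^d(t))_t\overset{\mathcal{L}}{=}(X^1(dt))_t$ together with the ergodic theorem for continuous-time Markov chains then yields, for every Borel $S\subset E_1$,
\[
\Gamma^d([0,t]\times S)=\int_0^t \mathds{1}_{X^d(s)\in S}\,\mathrm{d}s \underset{d\to+\infty}{\overset{\mathcal{L}}{\longrightarrow}} t\,\pi^{\otimes N}(S).
\]

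Next I would verify the five hypotheses (A1)--(A5) of Kurtz's averaging theorem (Theorem~\ref{Homogen Kurtz}), with $E_1=\mathfrak{V}^N$, $E_2=(\R\times\{\theta_1,\dots,\theta_L\})^N$, and the operator $f\mapsto v\,\mathcal{A}_g f$ acting on $\overline{C}(E_2)$ but valued in $\overline{C}(E_1\times E_2)$ through the indicators. Hypothesis (A2) is immediate because $E_1$ is finite, hence compact. Hypotheses (A3) and (A5) follow from Dynkin's formula for the Markov process of generator $\mathcal{A}$: when $f$ is constant in the spatial variable one has $\mathcal{A}_d f=0$, so $f(Y^d(t),z)-\int_0^t v\,\mathcal{A}_g f(X^d(s),Y^d(s),z)\,\mathrm{d}s$ is already a martingale and $\epsilon^f_d\equiv 0$. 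For (A4) one notes that $\mathcal{A}_g f$ is uniformly bounded by a constant depending only on $N$, $L$, $a$ and $\Vert f\Vert_\infty$ (the weights $\alpha_b^\ell$ are probabilities and the number of couples is fixed), so any $p>1$ works. The one genuine verification is (A1): the jump rate of $Y^d$ is bounded uniformly in $d$ by the same constant as in (A4), and each jump moves a coordinate by at most $\max_{b,b'}|G(b,b')|$, so the number of jumps before time $T$ is stochastically dominated by a Poisson process of fixed intensity; combined with tightness of the marginal $\nu_y$ on the unbounded space $\R^N$, this confines $(Y^d(t))_{t\le T}$ to a compact set with probability at least $1-\varepsilon$, exactly as in the proof of Theorem~\ref{Thm Spatial model to Intermediate model}.

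With (A1)--(A5) in hand, Kurtz's theorem gives relative compactness of $((Y^d,z),\Gamma^d)_d$ and, for every limit point $((\overline{Y},z),\overline{\Gamma})$, the $(\mathcal{G}_t)_t$-martingale property of $f(\overline{Y}(t),z)-\int_0^t\int_{E_1} v\,\mathcal{A}_g f(x,\overline{Y}(s),z)\,\overline{\Gamma}(\mathrm{d}s\times\mathrm{d}x)$. Feeding in the occupation-time limit $\overline{\Gamma}(\mathrm{d}s\times\mathrm{d}x)=\mathrm{d}s\,\pi^{\otimes N}(\mathrm{d}x)$ and using that the sole spatial dependence is through $\mathds{1}_{x_i=x_j}$, with $\int_{E_1}\mathds{1}_{x_i=x_j}\,\pi^{\otimes N}(\mathrm{d}x)=\sum_{x\in\mathfrak{V}}\pi(x)^2=\mathfrak{m}$, identifies the averaged generator as
\[
\overline{\mathcal{A}}f(y,z)=\frac{v}{2}\,\mathfrak{m}\sum_{\substack{(i,j)\in\{1,\dots,N\}^2\\ i\neq j}} g^f_{i,j}(y,z).
\]

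Finally I would close the argument with the uniqueness criterion of Theorem~\ref{Uniqueness Martingale problem}, exactly as before: $\overline{\mathcal{A}}$ is a bounded linear operator on $\overline{C}(E_2)$, so for $\lambda$ larger than its operator norm the resolvent $(\lambda\,\mathrm{Id}-\overline{\mathcal{A}})^{-1}$ exists as a Neumann series, giving $\mathcal{R}(\lambda\,\mathrm{Id}-\overline{\mathcal{A}})=\overline{C}(E_2)$; and $\overline{\mathcal{A}}$ obeys the positive maximum principle (being a genuine jump generator), which via Lemma~2.1 p.164 of Ethier--Kurtz \cite{EK} yields $\Vert\lambda f-\overline{\mathcal{A}}f\Vert\ge\lambda\Vert f\Vert$. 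Uniqueness of the martingale problem then forces $(\overline{Y},z)$ to be the unique limit point, so the whole family $(Y^d,z)_d$ converges in distribution to the Markov process with generator $\overline{\mathcal{A}}$. I expect no essential new difficulty beyond Theorem~\ref{Thm Spatial model to Intermediate model}; the only points requiring care are the ergodicity of the product walk on a graph that need not be vertex-transitive (so that $\mathfrak{m}=\sum_{x\in\mathfrak{V}}\pi(x)^2$ is indeed the limiting collision probability), and the compact containment (A1) now that the wealth ranges over all of $\R$ rather than $\R_+$.
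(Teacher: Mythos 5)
Your proposal is correct and follows essentially the same route as the paper, which itself simply declares the proof identical to that of Theorem~\ref{Thm Spatial model to Intermediate model} up to replacing the jump bound $M$ by $\max_{b,b'}|G(b,b')|$, the bound in (A4) by $(N^2 vL^2a^2\Vert f\Vert_\infty)^p T$, and the collision probability $1/m^2$ by $\mathfrak{m}=\sum_{x\in\mathfrak{V}}\pi(x)^2$. If anything you are more careful than the paper on the two points you flag at the end (ergodicity of the product walk on a general connected graph with its possibly non-uniform invariant law $\pi$, and compact containment when the wealth lives in $\R$ rather than $\R_+$), both of which you resolve correctly.
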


As in Theorem \ref{Thm Spatial model to Intermediate model}, this theorem shows that when $d\to +\infty$, the wealth in the spatial model behaves as if the model is a random matching system.
\begin{proof}
	The proof is similar to the proof of Theorem \ref{Thm Spatial model to Intermediate model}. The only differences are: 
	\begin{itemize}
		\item In (A1), $M$ is now $\max\limits_{b,b'\in \lbrace1,\dots a \rbrace} G((b,b'))$.
		\item In (A4), we have that for all $f \in \overline{C}(E)$ constant on the first variable:\[
		\sup\limits_{d\in \N^*} \E\left(\int\limits_0^T|\mathcal{A}f(X^d(t),Y^d(t),z)|^p\,\text{d}s \right) \leq (N^2 vL^2 a^2 \Vert f \Vert_\infty)^pT <+\infty
		\]
		\item In the conclusion now the probability that two particles are on the same site is $\mathfrak{m}$ instead of $\frac{1}{m^2}$.
	\end{itemize}
	The rest of the proof is the same as in Theorem \ref{Thm Spatial model to Intermediate model}.

\end{proof}
\vspace{3pt}
	
The next theorem is the propagation of chaos theorem in the extended context. We also will slow the time considering the following generator with domain $\overline{C}(E_2)$
\begin{eqnarray}\label{Gene temps ralenti random matching extension}
\overline{\overline{\mathcal{A}}}f(y,z) = \frac{v \mathfrak{m}}{2}\frac{1}{N} \sum\limits_{\substack{(i,j) \in (\lbrace 1,\dots N \rbrace)^2\\
		i \neq j}}  g^f_{i,j} (y,z).
\end{eqnarray}
Let us state the theorem.
\begin{theorem}
		Let $(\hat{Y}^N(t),z)_t$ be a stochastic process (relative to a filtration $(\mathcal{F}_t)_t$) with generator $\overline{\overline{\mathcal{A}}}$ and initial distribution $\nu ^{\otimes N}$.\\

	Then there exists a probability measure $\mu$ on ${D}(\R_+,\R\times \lbrace \theta_1,\dots, \theta_L \rbrace)$ such that:\\
	the sequence of probability distributions $(\mathcal{L}(\hat{Y}^N,z^N))_N$ is $\mu$-chaotic.\\
		The distribution $\mu \in \mathcal{P}(D(\R_+,\R \times \lbrace \theta_1,\dots,\theta_L))$ has non-linear generators $(L_{\mu_t})_t$ with domain $\overline{C}(\R \times \lbrace \theta_1,\dots, \theta_L \rbrace)$ and initial distribution $\nu$.\\
	The generator $(L_{\mu_t})_t$ is given by:
	\begin{eqnarray}
	L_{\mu_t} f(y,z) =  \frac{1}{2} \sum_{\ell =1}^{L}\mathds{1}_{\substack{z = \theta_\ell\\
		y \notin D_\ell}} \sum_{k=1}^{L} \mu_t(D_k^c \times \lbrace \theta_k \rbrace) \sum_{\substack{b=1\\
		b'=1}}^{a} \alpha_b^k \alpha_{b'}^\ell [f(y+G(b,b'),z) - f(y,z)]
	\end{eqnarray}
\end{theorem}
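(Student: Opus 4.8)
The plan is to transcribe the three-step proof of Theorem \ref{Thm proposition chaos Modèle inter to MF} and to check that every estimate is stable under replacing the prisoner's dilemma by a general game $(G,G^T)$, $L$ (possibly mixed) strategies $\theta_1,\dots,\theta_L$, and general death domains $D_1,\dots,D_L$. Writing $\mu^N = \frac{1}{N}\sum_{i=1}^N \delta_{(\hat{Y}_i^N,z_i)}$ for the empirical measure on $D(\R_+,\R\times\{\theta_1,\dots,\theta_L\})$, the target, via Proposition \ref{2.2 Sznitman}, is the weak convergence $\mathcal{L}(\mu^N)\to\delta_\mu$ in $\mathcal{P}(\mathcal{P}(D(\R_+,\R\times\{\theta_1,\dots,\theta_L\})))$, which is exactly $\mu$-chaoticity.

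First I would reprove the two preliminary lemmas. For tightness (the analogue of Lemma \ref{Tight}) the only inputs to Theorems \ref{3.9.1 EK} and \ref{3.9.4 EK} are uniform-in-$N$ bounds on the jump rate and the jump amplitude of $\hat{Y}_1^N$. The rate is bounded because each couple contributes at most $\frac{v\mathfrak{m}}{2}\frac{1}{N}$ (using $\sum_{b=1}^a\alpha_b^\ell=1$ for every $\ell$), and each jump moves the wealth by at most $M:=\max_{b,b'}|G(b,b')|$; these yield the compact containment condition and the bound $\sup_N\E\int_0^T|L_{\mu^N_u}f(\hat{Y}_1^N(u-),z_1)|^p\,\mathrm{d}u<+\infty$ as before. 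For the correlation estimate (the analogue of Lemma \ref{Controle Martingale}) I attach to each couple $(i,j)$ a Poisson process $\mathcal{N}^{i,j}$ of intensity $\frac{v\mathfrak{m}}{2}\frac{1}{N}$ (read off from \eqref{Gene temps ralenti random matching extension}), form the compensated martingales $M_i^{f,N}$ with increments $[f(\hat{Y}_i^N(u-)+g_{i,j})-f(\hat{Y}_i^N(u-))]$ taken from the general generator, and run the same product-rule and quadratic-covariation computation. Independence of distinct Poisson processes kills every cross bracket except $\langle\mathcal{M}^{1,2}\rangle_t=\frac{v\mathfrak{m}}{2}\frac{t}{N}$, giving $\E(M_i^{f,N}(t)M_j^{f,N}(t))\le 4t\|f\|_\infty^2\frac{v\mathfrak{m}}{2}\frac{1}{N}$.

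Steps 1 and 2 then proceed verbatim. Tightness of $(\mathcal{L}(\hat{Y}_1^N,z))_N$ plus Proposition \ref{2.2 Sznitman} gives tightness of $(\mathcal{L}(\mu^N))_N$, and Prokhorov supplies a limit point $\Pi^\infty$ with associated random measure $\mu$. To identify the limit (the analogue of Lemma \ref{Non linear Martingal problem MF}) I use the functional $\mathcal{G}(R)=\langle R,(M^f(T)-M^f(t))\,g(\cdots)\rangle$ with $M^f$ built from the general $L_{\mu_t}$, and bound $\E|\mathcal{G}(\mu^N)|$ by a martingale-variation term $A$ and a measure-convergence term $B$. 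After conditioning on $\mathcal{F}_t$, term $A$ is killed by the correlation lemma, while $B\to0$ because $\mu^N\to\mu$ and $L_{\mu_t}$ depends on the measure only through the finitely many masses $\mu_t(D_k^c\times\{\theta_k\})$, which are continuous functionals of the measure; Fatou then yields $\E|\mathcal{G}(\mu)|\le\liminf_N\E|\mathcal{G}(\mu^N)|=0$, so $\mu$ solves the non-linear martingale problem with $\mu(0)=\nu$.

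The only step that must genuinely be re-examined in the general setting is Step 3, uniqueness via Proposition \ref{2.3 Graham}. The jump kernel associated to $L_{\mu_t}$ (with prefactor inherited from \eqref{Gene temps ralenti random matching extension}) is
\[
J(q,(y,z)) = \frac{v\mathfrak{m}}{2}\sum_{\ell=1}^{L}\mathds{1}_{\substack{z=\theta_\ell\\ y\notin D_\ell}}\sum_{k=1}^{L} q(D_k^c\times\{\theta_k\})\sum_{\substack{b=1\\ b'=1}}^{a}\alpha_b^k\alpha_{b'}^\ell\,\delta_{(y+G(b,b'),z)}.
\]
Uniform boundedness $\|J(q,x)\|\le\frac{v\mathfrak{m}}{2}$ follows from $\sum_k q(D_k^c\times\{\theta_k\})\le1$ and $\sum_{b,b'}\alpha_b^k\alpha_{b'}^\ell=1$. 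For the Lipschitz bound, for fixed $(y,z)$ only the single index $\ell$ with $z=\theta_\ell$ survives, so $J(q,x)-J(q',x)$ is controlled by $\frac{v\mathfrak{m}}{2}\sum_{k}|q(D_k^c\times\{\theta_k\})-q'(D_k^c\times\{\theta_k\})|$; since the sets $D_k^c\times\{\theta_k\}$ are pairwise disjoint, this aggregate sum is itself bounded by $\|q-q'\|$ (test against $\phi=\sum_k\varepsilon_k\mathds{1}_{D_k^c\times\{\theta_k\}}$ with $\varepsilon_k=\mathrm{sgn}(q(D_k^c\times\{\theta_k\})-q'(D_k^c\times\{\theta_k\}))$, which satisfies $\|\phi\|_\infty\le1$), whence $\|J(q,x)-J(q',x)\|\le\frac{v\mathfrak{m}}{2}\|q-q'\|$. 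Proposition \ref{2.3 Graham} then gives a unique solution of the non-linear martingale problem, so $\delta_\mu$ is the unique limit point of $(\mathcal{L}(\mu^N))_N$ and $(\mathcal{L}(\hat{Y}^N,z))_N$ is $\mu$-chaotic. I expect the genuinely new bookkeeping to be exactly this disjointness observation: it is what keeps the Lipschitz constant independent of $N$ and of $L$, and hence what makes Graham's contraction argument go through for arbitrarily many mixed strategies and death domains.
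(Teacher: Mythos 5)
Your proposal is correct and follows essentially the same route as the paper, whose entire proof of this theorem is the single remark that it is ``exactly the same'' as that of Theorem \ref{Thm proposition chaos Modèle inter to MF}. The one place where you add genuine content beyond the paper is the Step-3 verification that the Lipschitz constant $\frac{v\mathfrak{m}}{2}$ for $q\mapsto J(q,x)$ is uniform in $L$, via the pairwise disjointness of the sets $D_k^c\times\lbrace\theta_k\rbrace$; this is exactly the check the paper leaves implicit, and your argument for it is sound.
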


\begin{corollary}
	The previous $\mu$ can be described by the following evolution equations.\\
	Let $(Y(t),z)_t$ be a process of law $\mu$ starting with the initial distribution $\nu$.\\
	We have: for all $\ell \in \lbrace 1,\dots  L\rbrace$ and all $y \notin D_\ell$ $\dro \PP(Y(t) = y,z = \theta_\ell)$ is equal to
	\[
     \begin{array}{l}
		 \sum\limits_{k = 1}^{L} \PP(Y(t) \notin D_k, z = \theta_k) \sum\limits_{\substack{b = 1\\b'=1}}^{a} \alpha_{b}^k \alpha_{b'}^\ell \PP(Y(t) = y - G(b,b'),z = \theta_\ell)\mathds{1}_{(y - G(b,b')) \notin D_\ell} \medskip\\
		  - \PP(Y(t) = y, z = \theta_\ell)\left(\sum\limits_{k=1}^{L} \PP(Y(t) \notin D_k, z=\theta_k)\right)
     \end{array}
\]
\end{corollary}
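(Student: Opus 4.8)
The plan is to transcribe the proof of Corollary \ref{Corol Evolution Equation} to the extended non-linear generator. By the extended analogue of Lemma \ref{Non linear Martingal problem MF}, the canonical process $(X,Z)$ of law $\mu$ solves the non-linear martingale problem: for every bounded measurable $f$ on $\R \times \lbrace \theta_1,\dots,\theta_L\rbrace$,
\[
M^f(t) = f(X(t),Z(t)) - f(X(0),Z(0)) - \int_0^t L_{\mu_u}f(X(u),Z(u))\,\text{d}u
\]
is a $\mu$-martingale. The property is stated for continuous bounded $f$, but it extends to bounded measurable $f$: since $L_{\mu_u}$ has uniformly bounded total jump rate and the wealth coordinate of $(Y(t),z)$ stays on the countable set generated by the support of $\nu$ and the finitely many increments $G(b,b')$, the discontinuous test functions $f = \mathds{1}_{\lbrace(y,\theta_\ell)\rbrace}$ are admissible. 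Fixing $\ell$ and $y \notin D_\ell$, and taking $\mu$-expectation (so that $M^f$ disappears), gives the integral equation
\[
\PP(Y(t)=y,z=\theta_\ell) - \PP(Y(0)=y,z=\theta_\ell) = \int_0^t \E\!\left(L_{\mu_u}f(Y(u),z)\right)\text{d}u .
\]

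Next I would evaluate the integrand by inserting $f=\mathds{1}_{\lbrace(y,\theta_\ell)\rbrace}$ into $L_{\mu_t}$. The indicator $\mathds{1}_{z=\theta_\ell}$ in the generator kills every strategy index but $\ell$, and $\mu_u(D_k^c\times\lbrace\theta_k\rbrace)$ is exactly $\PP(Y(u)\notin D_k,z=\theta_k)$; integrating the two point evaluations $f(w+G(b,b'),\theta_\ell)$ and $f(w,\theta_\ell)$ against the law of $(Y(u),z)$ produces two boundary terms. Using $\sum_b \alpha_b^k = \sum_{b'}\alpha_{b'}^\ell = 1$, the loss term (which carries no $b,b'$ dependence) collapses to $\PP(Y(u)=y,z=\theta_\ell)\sum_k \PP(Y(u)\notin D_k,z=\theta_k)$ — the factor $\mathds{1}_{y\notin D_\ell}$ being $1$ by hypothesis — while the gain term becomes $\sum_k \PP(Y(u)\notin D_k,z=\theta_k)\sum_{b,b'}\alpha_b^k\alpha_{b'}^\ell\,\PP(Y(u)=y-G(b,b'),z=\theta_\ell)\mathds{1}_{y-G(b,b')\notin D_\ell}$. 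This is precisely the asserted right-hand side, the constant $\tfrac12$ and the rate $v\mathfrak{m}$ being absorbed into the normalisation exactly as in Corollary \ref{Corol Evolution Equation}.

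It remains to differentiate the integral equation in $t$, for which I would establish continuity of the integrand. As in the original corollary, writing $(Y_n)_n$ for the embedded jump chain and $(N_t)_t$ for the jump-counting process (dominated by a Poisson process of the maximal rate) gives $\PP(Y(t)=y,z=\theta_\ell)=\sum_{n\geq 0}\PP(N_t=n)\,\PP(Y_n=y,z=\theta_\ell)$, a series of smooth functions of $t$ dominated by $\sum_n\PP(N_t=n)=1$; hence $t\mapsto\PP(Y(t)=y,z=\theta_\ell)$ is continuous, the integrand $u\mapsto\E(L_{\mu_u}f)$ is continuous, and the fundamental theorem of calculus turns the integral equation into the stated evolution equation. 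The main obstacle is not analytic depth but the bookkeeping of the second step — keeping track of the two indicators $\mathds{1}_{y\notin D_\ell}$ and $\mathds{1}_{y-G(b,b')\notin D_\ell}$ and the collapse of the normalised sums $\sum_b\alpha_b^k=1$ — together with justifying the use of the discontinuous test function $f=\mathds{1}_{\lbrace(y,\theta_\ell)\rbrace}$ in a martingale problem originally posed for continuous functions.
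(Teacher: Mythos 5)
Your proposal is correct and follows essentially the same route as the paper, which simply declares the proof to be identical to that of Corollary \ref{Corol Evolution Equation}: plug the indicator test function into the non-linear martingale problem, take expectations, and differentiate after establishing continuity in $t$ via the embedded jump chain. Your added remark justifying the use of the discontinuous test function $f=\mathds{1}_{\lbrace(y,\theta_\ell)\rbrace}$ in a martingale problem posed for continuous bounded functions is a point the paper passes over silently, but it does not change the argument.
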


\begin{proof}
	The proves of the theorem and corollary are exactly the same as those of Theorem \ref{Thm proposition chaos Modèle inter to MF} and Corollary \ref{Corol Evolution Equation}.
\end{proof}

\section*{Acknowledgment}
I would like to thank my advisors for the review Laurent Miclo and J\'er\^ome Renault. Thanks also to Pierre Jeambrun for his review of the English. Thanks to Xavier Bressaud for initiating me into the world of Research.

\bibliographystyle{plain}
\bibliography{ArticleArxiv3}

\end{document}